\newcommand{\bigset}[2]{\bigl\{#1\bigm|#2\bigr\}}
\newcommand{\bigabs}[1]{\bigl| #1 \bigr|}
\newcommand{\jap}[1]{\langle #1 \rangle}
\def\a{\alpha}
\def\d{\delta}
\def\n{\nu}
\def\s{\sigma}
\def\x{\xi}
\def\y{\eta}
\def\C{\Gamma}
\newcommand{\be}{\begin{equation}}
\newcommand{\ee}{\end{equation}}
\def\la{\langle}
\def\ra{\rangle}
\def\T{{\bf T}}
\def\re{\mathbb{R}}
\def\co{\mathbb{C}}
\def\na{\mathbb{N}}
\def\pa{\partial}
\renewcommand{\Re}{\text{{\rm Re}\;}}
\renewcommand{\Im}{\text{{\rm Im}\;}}
\newtheorem{thm}{Theorem}[section]
\newtheorem{lem}[thm]{Lemma}
\newtheorem{prop}[thm]{Proposition}
\theoremstyle{definition}
\newtheorem{ass}{Assumption}
\newtheorem{rem}[thm]{Remark}
\newcommand{\WF}{W\!F_a}
\numberwithin{equation}{section}
\title{Analytic Wave Front Set for Solutions to Schr\"odinger Equations II -- Long Range Perturbations}
\author{Andr\'e Martinez${}^1$, Shu Nakamura${}^2$, Vania Sordoni${}^1$}
\begin{document}

\maketitle 
\addtocounter{footnote}{1}
\footnotetext{Universit\`a di Bologna, Dipartimento di
Matematica, Piazza di Porta San Donato 5, 40127 Bologna,
Italy. Partly supported by Universit\`a di Bologna, Funds
for Selected Research Topics and Founds for Agreements with
Foreign Universities}
\addtocounter{footnote}{1}
\footnotetext{Graduate School of Mathematical Science, University of
Tokyo, 3-8-1
Komaba, Meguro-ku, Tokyo, Japan 153-8914.}

%%%%%%%%%%%%%%%% ABSTRACT %%%%%%%%%%%%%%%%%%%%%%%%%%%%%
\begin{abstract} 
This paper is a continuation of \cite{MNS2}, where short range perturbations of the flat Euclidian metric where considered. Here, we generalize the results of \cite{MNS2} to long-range perturbations (in particular, we can allow potentials growing like $\langle x\rangle^{2-\varepsilon}$ at infinity). More precisely, we construct a modified quantum free evolution $G_0(-s, hD_z)$ acting on Sj\"ostrand's spaces, and we characterize the analytic wave front set of the solution $e^{-itH}u_0$ of the Schr\"odinger equation, in terms of the semiclassical exponential decay of $G_0(-th^{-1}, hD_z)\T u_0$, where $\T$ stands for the Bargmann-transform. 
The result is valid for $t<0$ near the forward non trapping points, and for $t>0$ near the backward non trapping points. It is an extension  of \cite{Na4} to the analytic framework.
\end{abstract}

%%%%%%%%%%%%%%%%%  SECTION 1  %%%%%%%%%%%%%%%%%%%%%%%%%

\section{Introduction} \label{sec-intro}
We consider the analytic singularities of the solutions $u(t)=e^{-itH}u_0$ to  a variable coefficients 
Schr\"odinger equation, where the Schr\"odigner operator $H$ is time-independent 
and of long-range type perturbation  (that is, sub-quadratic) of the Laplacian $H_0$ on $\re^n$.

\bigskip
Such a problem has been the source of an abundant literature in the last decades, and we refer to \cite{MNS1, MNS2} for a long (though probably not exhaustive) list of references. Let us only mention the most recent works \cite{Do1, Do2, HaWu, It, KaTa, MNS1, MNS2, MRZ, Na2, Na3, Na4, RoZu1, RoZu2, RoZu3, Wu}.

\bigskip
In \cite{MNS2}, we proved that, in the short range case, the forward (resp. backward) non-trapping microlocal singularities propagate for $t<0$ (resp. $t>0$) accordingly with those of the free evolution $e^{-itH_0}u_0$, except for a shift due to the possible perturbation of the metric (no shift appears if the perturbation is of the first order). Actually, this shift is expressed by the underlying classical wave operators for the pair $(H,H_0)$, that is, the map $S_{\pm}$ defined by
$$
S_{\pm}(x,\xi ):= \lim_{t\rightarrow \pm\infty }\exp tH_{p_0}\circ \exp tH_{q_0} (x,\xi),
$$
where $q_0$ and $p_0$ are the principal symbols of $H$ and $H_0$, respectively. More precisely, denoting by $FNT$ (resp. $BNT$) the forward (resp. backward) non-trapping set, we proved the two identities (see \cite{MNS2} Theorem 2.1):
\begin{eqnarray*}
&& WF_a(e^{-itH}u_0)\cap FNT = S_+^{-1}(WF_a(e^{-itH_0}u_0)) \,\, \mbox{ for all } t<0;\\
&& WF_a(e^{-itH}u_0)\cap BNT = S_-^{-1}(WF_a(e^{-itH_0}u_0)) \,\, \mbox{ for all } t>0.
\end{eqnarray*}

\bigskip
However, in the long range case, the previous operators $S_{\pm}$ do not exist anymore, and, as well as in the corresponding quantum case, one has to modify the free evolution near infinity in order to define wave operators. 

\bigskip
Here, we follow the general idea of \cite{Na4}, that consisted in replacing the free quantum evolution by an operator of the form $e^{iW(-t, D_x)}$, where $W=W(t,\xi)$ is a solution to $\partial_t W=p(\partial_\xi W, \xi)$ for large $|\xi|$, and $p$ is the {\it total} symbol of $H$. 

\bigskip
But in our case, we have the additional difficulty that we must remain in the analytic category, and thus, avoid the use of cut-off functions. 

\bigskip
In order to solve this problem, we prefer to work from the very beginning in weighted Sj\"ostrand's spaces, since this allows us to put ourselves in a semiclassical setting, and to limit the construction of the modified free evolution to the set $\{ |\xi|>\delta_0\}$, with $\delta_0>0$ arbitrarily small (and actually, we could even have limited it to a compact subset of $\re^n\backslash 0$). In this way, we obtain an analytic ($h$-dependent) function $W(s, \xi;h)$, solution of
$$
\partial_sW = h^2 p(\partial_\xi W, h^{-1}\xi),
$$
where $h>0$ is the additional semiclassical parameter, and our result can be written in terms of decaying properties, as $h\rightarrow 0_+$, of the quantity $e^{iW(-th^{-1}, hD_z)/h}\T u_0$, where $\T$ is the usual  Bargmann transform: $L^2(\re^n)\rightarrow H_{\Phi_0}^{loc}$, and the operator $e^{iW(-th^{-1}, hD_z)/h}$ acts on the weigthed Sj\"ostrand space $H_{\Phi_0}^{loc}$ (see Theorem \ref{mainth} for a precise statement).

\bigskip
Let us also observe that we recover one of the difficulties of \cite{MNS2}, concerning the fact that the size of the region in which the solution must be considered increases like $\la s\ra =\la th^{-1}\ra$. In \cite{MNS2}, this appeared just after the conjugation by $e^{-itH_0}$, and constituted the main problem in order to apply Sj\"ostrand's theory (see \cite{MNS2} Lemma 3.1). Here, this difficulty appears repeatedly when we want to make changes of good contours in the integrals. While this can be done automatically in the microlocal setting of \cite{Sj}, here we need to justify it each time we do it, because the size of these contours increases, too, like $\la s\ra$.

\bigskip
The paper is organized as follows:

\bigskip
In the next section, we introduce the notations and state our main result. In Section \ref{secPrelim}, we prove  several estimates on the Hamilton flow of the total semiclassical symbol of $h^2H$. In Section \ref{sec-modevol}, we construct both the classical modified free evolution and the quantum modified free evolution, and we prove that the modified evolution acts correctly on convenient Sj\"ostrand spaces. In Section \ref{sec-conj}, we conjugate the evolution equation by the modified free quantum evolution, and we study the structure of the resulting equation. Then, the proof of the main theorem is completed in Section \ref{sec-proofmainth}. The appendices contain the justification of the various changes of contours of integration (Appendix \ref{appA}) and a technical result concerning the derivations on non-local Sj\"ostrand's spaces (Appendix \ref{appB}).

\vskip 0.2cm

%%%%%%%%%%%%%%%%%%%%%%%  SECTION 2  %%%%%%%%%%%%%%%%%%%%%%%%%%%%%%%
\section{Notations and result}\label{sec-result}

We consider the  Schr\"odinger equation associated with the operator,
%\begin{equation}\label{eq-result-1}
\[
H= \frac12 \sum_{j,k=1}^n D_j a_{j,k}(x)D_k 
+  \frac12\sum_{j=1}^n (a_j(x)D_j+D_ja_j(x)) +a_0(x)
\]
%\end{equation}
on $\mathcal{H}=L^2(\re^n)$, where $D_j = -i\pa_{x_j}$. We suppose 
the coefficients $\{a_\a(x)\}$ satisfy to the following 
assumptions. For $\n>0$ we denote
\[
\C_\n=\bigset{z\in\co^n}{|\Im z|<\n \jap{\Re z}}.
\]
\begin{ass} 
\label{assA}
For each $\a$, $a_\a(x)\in C^\infty(\re^n)$ is real-valued and  can be extended to a holomorphic 
function on $\C_\n$ with some $\n>0$. Moreover, for $x\in \re^n$, the matrix  $(a_{j,k}(x))_{1\leq j, k\leq n}$
is symmetric and positive definite, and  there exists $\s\in (0,1]$ 
such that, 
\begin{align*}
&\bigabs{a_{j,k}(x)-\d_{j,k}} 
\leq C_0 \jap{x}^{-\s}, \quad j,k=1,\dots,n, \\
&\bigabs{a_{j}(x)} 
\leq C_0\jap{x}^{1-\s} , \quad\qquad j=1,\dots,n,\\
&\bigabs{ a_0(x)} 
\leq C_0 \jap{x}^{2-\s},
\end{align*}
for $x\in\C_\n$ and with some constant $C_0>0$. 
\end{ass}

(Here, we have used the notation $\la x\ra := (1+|x|^2)^{1/2}$.) In particular, $H$ is essentially selfadjoint on $C_0^\infty (\re^n)$, and, denoting by the same letter $H$ its unique selfadjoint extension on $L^2(\re^n)$, we can consider
its quantum  evolution group $e^{-itH}$.

\bigskip
In order to describe the analytic wave-front set of $u$, we use the setting of \cite{Sj} and introduce the Bargmann-FBI transform $\T$ defined by,
$$
\T u(z,h)=\int e^{-(z-y)^2/2h}u(y)dy,
$$
where $z\in\co^n$ and $h>0$ is a small extra-parameter. Then, $\T u$ belongs to the Sj\"ostrand space $H_{\Phi_0}^{loc}$ with $\Phi_0(z):= |\Im z|^2/2$, that is, it is a holomorphic function of $z$, and, for any compact set $K\subset\co^n$ and any $\varepsilon >0$, there exits $C=C(k,\varepsilon)$ such that $|\T u(z,h)|\leq Ce^{(\Phi_0(z)+\varepsilon)/h}$, uniformly for $z\in K$ and $h>0$ small enough.

\bigskip

We recall from \cite{Sj} that a point $(x,\x)\in T^*\re^n\backslash 0$ is not in $\WF (u)$ if and only if there exists some $\delta >0$ such that $\T u ={\cal O}(e^{(\Phi_0(z)-\delta)/h})$ uniformly for $z$ close enough to $x-i\x$ and $h>0$ small enough. By using Cauchy-formula and the continuity of $\Phi_0$, it is easy to see that this is also equivalent to the existence of some $\delta'>0$ such that  $\Vert e^{-\Phi_0/h}\T u\Vert_{L^2(\Omega)} ={\cal O}(e^{-\delta' /h})$ for some complex neighborhood $\Omega$ of $x-i\xi$. In that case, we will just write: $\T u\sim 0$ in $H_{\Phi_0, x-i\xi}$, where $H_{\Phi_0, z}$ is the space of  germs of $H_{\Phi_0}$-functions near a complex point $z$ (see \cite{Sj} and the appendix of \cite{MNS2}).

\bigskip
We denote by $p(x,\xi):=\frac12 \sum_{j,k=1}^n a_{j,k}(x)\x_j\x_k $ the  principal symbol of $H$, and by $H_0:= -\frac12\Delta$ the free Laplace operator. For any $(x,\x)\in \re^{2n}$, we also denote by $(y(t,x,\x), \y(t,x,\x))={\rm exp}tH_p(x,\x)$ the Hamilton flow of $p$, and we say that a point $(x_0,\x_0)\in T^*\re^n\backslash 0$ is forward non-trapping when $|y(t,x_0,\x_0)|\rightarrow \infty$ as $t\rightarrow +\infty$. In this case, it is well-known that $\eta (t,x_0,\xi_0)$ admits a limit $\x_+(x_0,\x_0)\in\re^n\backslash 0$ as $t\rightarrow +\infty$. However, in contrast with the short-range case, the quantity $y(t,x_0,\x_0)-t\eta(t,x_0,\x_0)$ may not have a limit. 

\bigskip
In order to overcome this inconvenience, one has to modify the free evolution near infinity. For $h>0$ small enough, we set,
$$
q(x,\xi ;h):= \frac12 \sum_{j,k=1}^n a_{j,k}(x)\xi_j\xi_k 
+  h\sum_{j=1}^n a_j(x)\xi_j+h^2a_0(x).
$$
Then, given some $\delta_0>0$ arbitrarily small, and following \cite{Na4}, for $s\geq 0$, $|\xi|>\delta_0$, and $h>0$ small enough, in Section \ref{secmodevol} we construct a function $W(s,\xi ;h)$, solution of,
\be
\label{eikonal1}
\frac{\partial W}{\partial s} -q(\partial_\xi W, \xi ;h) =0,
\ee
and such that, denoting by $(\tilde x (s, z,\zeta;h ), \tilde\xi (s, z,\zeta ;h)):=\exp sH_q(x,\xi )$ the  Hamilton flow of $q$, then, for any forward non-trapping point $(x_0,\xi_0)$ with $|\xi_+(x_0,\xi_0)|>\delta_0$, the quantity,
\be
\label{modevol}
\tilde x (h^{-1}, x_0,\xi_0 ;h) - \partial_\xi W (h^{-1}, \tilde\xi (h^{-1}, x_0,\xi_0 ;h);h)+\partial_\xi W (0, \xi_+( x_0,\xi_0) ;h)
\ee
admits a limit $x_+(x_0,\xi_0)$  as $h$ tends to $0_+$. 

\bigskip 
For $z\in \co^n\cap\{| \Im z |>\delta_0\}$, we set,
\begin{eqnarray*}
&& Z(s,z):=z+\partial_\xi W (0,-\Im z) - \partial_\xi W (s,-\Im z),
\end{eqnarray*}
and we denote by $W(s, hD_z)$ a quantization of $W(s,\zeta)$ on $H_{\Phi_0, z}$ as in \cite{Sj} (see also \cite{MNS2}, Appendix). Then, for any $s\geq 0$, in Section \ref{secmodevolq} we construct an invertible analytic  Fourier Integral Operator,
$$
G_0(s) \, :\, H_{\Phi_0,Z(s,z)} \rightarrow H_{\Phi_0, z}
$$
such that,
$$
ih\frac{\partial G_0}{\partial s} +(\partial_s W)(s,hD_z) G_0(s) \sim 0\quad ; \quad G_0(0)\sim I.
$$
For more transparency in the notations, we will write $e^{i \widetilde W(s, hD_z)/h}$ for the operator $G_0(s)$, where $\widetilde W(s, \zeta):= W(s,\zeta )-W(0,\zeta )$.

\bigskip
Then, our main result is,
\begin{thm}\sl \label{mainth}
Suppose Assumption~A, and suppose $(x_0,\x_0)$ is forward non-trapping with $|\xi_+(x_0,\xi_0)|>\delta_0$. Then, for any $t<0$ and  any $u_0\in L^2(\re^n)$,  one has the equivalence,
$$
(x_0,\x_0)\notin \WF(e^{-itH}u_0)\, \iff \,  e^{i \widetilde W(-th^{-1}, hD_z)/h}\T u_0\sim 0 \mbox { in } H_{\Phi_0, z_+(x_0,\xi_0)},
$$
where $z_+(x_0,\xi_0):=x_+(x_0,\xi_0)-i\xi_+(x_0,\xi_0)$.
\end{thm}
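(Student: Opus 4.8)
The plan is to conjugate the Schr\"odinger evolution by the modified free quantum evolution $G_0(s)=e^{i\widetilde W(s,hD_z)/h}$ and reduce the statement to a propagation estimate for the resulting equation, which — because $W$ solves the eikonal equation \eqref{eikonal1} — should have a simple transport-type structure along the Hamilton flow of $q$ (equivalently, of $p$ modulo the semiclassically small terms $ha_j$, $h^2a_0$). Concretely, set $v(s):=G_0(s)^{-1}\,\T\,e^{-ishH}u_0$, where $s=th^{-1}\le 0$ in the region of interest ($t<0$). Using $ih\partial_s G_0 + (\partial_sW)(s,hD_z)G_0\sim 0$ together with the fact that $\T$ intertwines $h^2H$ with a pseudodifferential operator of symbol $q(z,hD_z;h)$ on $H_{\Phi_0}$ (Sj\"ostrand's quantization, as recalled in \cite{Sj} and the appendix of \cite{MNS2}), one checks that $v$ solves an equation of the form $ih\partial_s v = R(s,z,hD_z)v$, where the symbol of $R$ is, to leading order, the difference between $q$ evaluated along the true flow and the eikonal phase — and by construction of $W$ this difference is supported "at spatial infinity" and is, in the relevant region, a small remainder. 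This is exactly the analogue of the reduction carried out in Sections \ref{sec-conj}--\ref{sec-proofmainth}; I would then invoke the analytic propagation of singularities for $v$ to move the microlocal decay from the data at $s=0$ to the data at $s=th^{-1}$ and back.

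The key steps, in order: (i) establish the mapping properties of $G_0(s)$ between the weighted germ spaces $H_{\Phi_0,Z(s,z)}$ and $H_{\Phi_0,z}$, and identify precisely how the base point transforms — this is where $Z(s,z)=z+\partial_\xi W(0,-\Im z)-\partial_\xi W(s,-\Im z)$ and, at the classical level, the limiting quantity \eqref{modevol} defining $x_+(x_0,\xi_0)$ enter, so that applying $G_0(-th^{-1})$ to $\T u_0$ lands the relevant germ at the point $z_+(x_0,\xi_0)=x_+(x_0,\xi_0)-i\xi_+(x_0,\xi_0)$; (ii) conjugate the evolution equation and isolate the transport part versus the genuinely small remainder, using the eikonal equation to cancel the principal term; (iii) run the analytic WKB / good-contour argument to propagate the exponential decay, exploiting that along a non-trapping trajectory the flow $\exp sH_q$ escapes to infinity so the contour deformations can be closed; (iv) translate the decay of $G_0(-th^{-1})\T u_0$ near $z_+(x_0,\xi_0)$ back into the statement $(x_0,\xi_0)\notin\WF(e^{-itH}u_0)$ via the characterization of $\WF_a$ recalled after the definition of $\T$. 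Both implications of the equivalence follow from the same argument because $G_0(s)$ is invertible (an analytic FIO with an elliptic amplitude in the relevant region), so the reduction is reversible.

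I expect the main obstacle to be step (iii) combined with the contour-deformation bookkeeping flagged in the introduction: because the relevant "time" is $s=th^{-1}\to\infty$ as $h\to0_+$, the region in $z$-space where the solution must be controlled, and the good contours used to represent $G_0(s)$ and to estimate $v(s)$, both grow like $\langle s\rangle=\langle th^{-1}\rangle$. Unlike the purely microlocal setting of \cite{Sj}, each change of contour has to be justified quantitatively with constants uniform in this growing scale — this is precisely what Appendix \ref{appA} is for — and one must show that the remainder $R(s,z,hD_z)$, though small pointwise, does not accumulate over the long time interval to destroy the exponential decay. Controlling this accumulation requires the quantitative decay estimates on the Hamilton flow of the total symbol from Section \ref{secPrelim} (the $\langle x\rangle^{-\sigma}$, $\langle x\rangle^{1-\sigma}$, $\langle x\rangle^{2-\sigma}$ bounds in Assumption~A being exactly what makes the relevant integrals in $s$ converge), together with the technical results on derivations on non-local Sj\"ostrand spaces in Appendix \ref{appB}. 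A secondary subtlety is staying strictly within the analytic category throughout — no cut-offs — which forces the localization to $\{|\xi|>\delta_0\}$ to be implemented at the level of the weighted spaces rather than by multiplication, and one has to check this localization is compatible with every step above.
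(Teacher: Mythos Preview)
Your outline is essentially the paper's own strategy, but two points need correction or sharpening.

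First, the direction of the conjugation is reversed. After the reformulation (replace $u_0$ by $e^{itH}u_0$ and $t$ by $-t$, so that now $t>0$ and $s:=t/h\ge 0$), the quantity to study is $w(s):=G_0(s)\,\T(e^{-ishH}u_0)$, not $G_0(s)^{-1}\T(e^{-ishH}u_0)$. With the inverse you would land at the wrong base point (the map $z\mapsto Z(s,z)$ goes the wrong way), and in any case $G_0(s)$ is only constructed for $s\ge 0$, so working directly with $s\le 0$ as you propose would require a separate construction. This is minor but does affect every subsequent mapping statement in your step~(i).

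Second, and more substantively, step~(iii) glosses over what is really the heart of the proof. After conjugation by $G_0(s)$ the resulting operator $L(s)=G_0(s)\widetilde Q\,G_0(s)^{-1}-(\partial_sW)(s,hD_z)$ is \emph{not} a negligible remainder: its principal symbol is $\ell_0(s,z,\zeta)=q(z+i\zeta+\partial_\zeta\widetilde W(s,\zeta),\zeta;h)-q(\partial_\zeta W(s,\zeta),\zeta;h)$, which is $O(\langle s\rangle^{-1-\sigma})$ only near the limiting point $(z_+(x_0,\xi_0),\xi_+(x_0,\xi_0))$, and whose Hamilton flow $\widetilde R_s$ is exactly what carries $(x_0-i\xi_0,\xi_0)$ to $(z_+(x_0,\xi_0),\xi_+(x_0,\xi_0))$ as $s=t/h$, $h\to 0_+$. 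The eikonal equation does not ``cancel the principal term'' globally; it leaves precisely this integrable-in-$s$ residual. To propagate the exponential decay between the two base points the paper constructs a \emph{second} analytic FIO $F(s)$ (and $\widetilde F(s)$) quantizing $\widetilde R_s$, sets $\tilde w(s)=\widetilde F(s)w(s)$, and then runs a Gronwall-type energy estimate in $L^2_{\tilde\Phi_0}$ with a modified weight $\tilde\Phi_0$ (equal to $\Phi_0$ near $z_1$, strictly larger outside) to absorb the boundary. Your phrase ``invoke the analytic propagation of singularities for $v$'' is not enough here: there is no off-the-shelf result for an $s$-dependent symbol on these growing domains, and this FIO construction together with the $\langle s\rangle^{-1-\sigma}$ integrability is exactly where the long-range exponent $\sigma$ from Assumption~A is spent. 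Your discussion of the uniform good-contour issues (Appendix~\ref{appA}) and the derivative lemma (Appendix~\ref{appB}) is on target, but those enter earlier, in showing that $G_0(s)\widetilde Q\,G_0(s)^{-1}$ is a pseudodifferential operator with the stated symbol (Proposition~\ref{propQconj}), not in the final propagation step.
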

\begin{rem} Since $WF_a(u)$ is conical with respect to $\xi$ and $\xi_+(x_0, \lambda\xi_0)=\lambda\xi_+(x_0,\xi_0)$ for all $\lambda >0$, the condition $|\xi_+(x_0,\xi_0)|>\delta_0$ is not restrictive.
\end{rem}
\begin{rem} Actually, equation (\ref{eikonal1}) needs not be satisfied by $W$, and the result remains valid with any $W$ such that (\ref{modevol}) admits a limit, and $\frac{\partial W}{\partial s} -q(\partial_\xi W, \xi ;h) = {\cal O}(\la s\ra^{-1-\sigma})$ uniformly for $s={\cal O}(h^{-1})$, $h\rightarrow 0_+$.
\end{rem}
\begin{rem} In the short-range case, one can actually take $W(s,\xi) = s\xi^2/2$, so that $e^{i W(-th^{-1}, hD_z)/h}$ just becomes $e^{-itD_z^2/2}$, and the function $e^{i W(-th^{-1}, hD_z)/h}\T u_0$ coincides with $\T (e^{-itH_0}u_0)$. Thus, in that case, one recovers the result of \cite{MNS2}.
\end{rem}
\begin{rem} Of course, there is a similar result for  $(x_0,\x_0)$  backward non-trapping  and $t>0$.
\end{rem}

%%%%%%%%%%%%%%%%%%%%%%%  SECTION 3  %%%%%%%%%%%%%%%%%%%%%%%%%%%%%%%%
\section{Preliminaries}\label{secPrelim}
Replacing $u_0$ by $e^{itH}u_0$ and changing $t$ to $-t$, we see that the result can be reformulated by writing that, for any $t>0$, one has the equivalence,
$$
(x_0,\x_0)\notin \WF(u_0)\, \iff \,  e^{i\widetilde W(th^{-1}, hD_z)/h}\T (e^{-itH}u_0)\sim 0 \mbox { in } H_{\Phi_0, z_+(x_0,\xi_0)}.
$$
We set $v(t):=\T (e^{-itH}u_0)$. Then, by a standard result of Sj\"ostrand's theory (see \cite{Sj} Proposition 7.4 and \cite{MNS2} Section 4),  we see that $v(t)$ is solution of,
\begin{equation}
\label{eqevol}
i\frac{\partial v}{\partial t} \sim  \tilde H v(t) \, \mbox{ in } H_{\Phi_0}^{loc},
\end{equation}
where $\tilde H$ is the pseudodifferential operator on $H_{\Phi_0}^{loc}$, defined by,
\be
\label{defHtilde1}
\tilde H:=\frac12 \sum_{j,k=1}^n D_{z_j} {\rm Op}_R(\tilde a_{j,k})D_{z_k} 
+\frac12\sum_{j=1}^n ({\rm Op}_R(\tilde a_j)D_{z_j}+D_{z_j}{\rm Op}_R(\tilde a_j)) +{\rm Op}_R(\tilde a_0).
\ee
Here, we have set $\tilde a_{j,k} (z,\zeta ) := a_{j,k}(z+i\zeta )$, $\tilde a_{j} (z,\zeta ) := a_{j}(z+i\zeta )$, and, for any function $a(z.\zeta )$ holomorphic  near some point $(z_0, -\Im z_0)$, we have denoted by  ${\rm Op}_R(a)$ its quantization on $H_{\Phi_0,z_0}$  given by,
\be
\label{quantizPDO}
{\rm Op}_R(a) v(z;h):= \frac1{(2\pi h)^n}\int_{\gamma_R (z)}e^{i(z-y)\zeta /h}a\big( \frac{y+z}2 ,\zeta\big) v(y)dyd\zeta,
\ee
where $\gamma_R (z)$ is the $2n$-complex contour,
\be
\label{contourPDO}
\gamma_R (z) \, :\, \zeta = -\Im z +iR(\overline{z-y}) \,\, ;\,\, |z-y| < R^{-1/2},
\ee
with $R >0$ constant, $R$ sufficiently (and arbitrarily) large. 

\bigskip
As in \cite{MNS2}, we change the time scale by setting $s:=t/h$, and we multiply equation (\ref{eqevol}) by $h^2$. We obtain (with the notation $\tilde v(s):=v(hs)$),
\begin{equation}
\label{eqevol2}
ih\frac{\partial \tilde v}{\partial s} \sim \widetilde Q \tilde v(s) \, \mbox{ in } H_{\Phi_0}^{loc},
\end{equation}
with $\widetilde Q:=h^2\tilde H = \widetilde Q_0 +h\widetilde Q_1 +h^2\widetilde Q_2$, where the symbol of  $\widetilde Q_j$ is of the form $\widetilde q_j +{\cal O}(h)$ (locally in $(z,\zeta)$), with,
\begin{eqnarray}
&& \widetilde q_0(z,\zeta ;h) = q_0(z+i\zeta, \zeta) :=\frac12 \sum_{j,k=1}^n a_{j,k}(z+i\zeta) \zeta_j\zeta_k ;\nonumber\\
\label{defqj}
&& \widetilde q_1(z,\zeta ;h) = q_1(z+i\zeta, \zeta) := \sum_{j=1}^n a_{j}(z+i\zeta) \zeta_j ;\\
&& \widetilde q_2(z,\zeta ;h) =  q_2(z+i\zeta, \zeta) := a_{0}(z+i\zeta).\nonumber
\end{eqnarray}

\bigskip
In order to construct the function $W(s,\xi)$ and the Fourier integral operator $G_0(s)\sim e^{i \widetilde W(s,hD_z)/h}$, we need some estimates on the Hamilton flow of $q:=q_0 + hq_1 +h^2q_2$.

\begin{lem} \sl
\label{estflow}
Set $(\tilde x (s, x,\xi;h ), \tilde\xi (s, x,\xi ;h)):=\exp sH_q(x,\xi )$. Then, for any forward non-trapping point $(x_0,\xi_0)$ and any $T>0$, $\tilde\xi (T/h, x_0,\xi_0 ;h)$ tends to $\xi_+(x_0,\xi_0)$ (independent of $T$) as $h\rightarrow 0_+$.
Moreover,  there exists a constant $C=C(T)>0$ such that, for $(x,\xi)\in \co^{2n}$ close enough to $(x_0,\xi_0)$, $s\in [0, T/h]$, and $h>0$ small enough, one has,
\begin{eqnarray*}
&& |\tilde x (s, x,\xi;h )| \geq \frac{s}C  -C ;\\
&& |\tilde\xi (s, x,\xi ;h) -\xi_+( x, \xi)|\leq C\la s\ra^{-\sigma};\\
&& | \tilde x (s, x,\xi;h )| \leq Cs+C.
\end{eqnarray*}
\end{lem}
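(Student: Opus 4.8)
\textbf{Proof plan for Lemma \ref{estflow}.}

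The plan is to treat the Hamilton flow of $q = q_0 + hq_1 + h^2q_2$ as a perturbation of the flow of the principal symbol $q_0$, and to exploit the non-trapping hypothesis through the standard a priori control it provides on the classical flow of $q_0$ (equivalently, of $p$, since $q_0(x,\xi)=p(x,\xi)$ for real arguments). First I would recall what non-trapping gives us: for the flow $\exp tH_p(x_0,\xi_0)$, the position escapes linearly, $c|t| - C \le |y(t,x_0,\xi_0)| \le C|t| + C$, the momentum $\eta(t,x_0,\xi_0)$ converges to a nonzero limit $\xi_+(x_0,\xi_0)$, and by Assumption~A (in particular $|\nabla_x a_{j,k}| = O(\langle x\rangle^{-1-\sigma})$, which follows from the decay of $a_{j,k}-\delta_{j,k}$ together with analyticity and Cauchy estimates) the convergence is quantitative: $|\eta(t,x_0,\xi_0) - \xi_+(x_0,\xi_0)| \le C\langle t\rangle^{-\sigma}$. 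By continuous dependence of the flow on initial data and coefficients, these bounds persist, with uniform constants, for complex $(x,\xi)$ in a small neighbourhood of $(x_0,\xi_0)$, on any fixed compact time interval; the real content is to propagate them up to time $T/h$, which grows as $h\to 0_+$.

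The key step is therefore a Gronwall-type argument comparing $(\tilde x(s,x,\xi;h),\tilde\xi(s,x,\xi;h))$ with the unperturbed flow. Write $X(s) = \tilde x(s) - x(s)$, $\Xi(s) = \tilde\xi(s) - \xi(s)$ where $(x(s),\xi(s)) = \exp sH_{q_0}(x,\xi)$. The difference of the Hamilton equations gives $\dot X = \partial_\xi q_0(\tilde x,\tilde\xi) - \partial_\xi q_0(x,\xi) + h(\ldots)$, $\dot\Xi = -\partial_x q_0(\tilde x,\tilde\xi) + \partial_x q_0(x,\xi) + h(\ldots)$, where the $h$-terms come from $hq_1 + h^2q_2$ and are $O(h\langle \tilde x\rangle^{1-\sigma} + h^2\langle\tilde x\rangle^{2-\sigma})$ in the relevant region $|\xi|$ bounded. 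The derivatives of $q_0$ are controlled using $\partial_x q_0 = O(\langle x\rangle^{-1-\sigma}|\xi|^2)$ and $\partial_\xi q_0 = O(|\xi|)$, $\partial_x\partial_\xi q_0 = O(\langle x\rangle^{-1-\sigma}|\xi|)$. Feeding these into the differential inequalities and using that $|x(s)| \sim s$ for the unperturbed flow, one gets a closed system of the schematic form $|\dot X| \le C|\Xi| + \text{(small)}$, $|\dot\Xi| \le C\langle s\rangle^{-1-\sigma}(|X| + |\Xi|) + C h\langle s\rangle^{1-\sigma}$. A bootstrap: assume $|\tilde x(s)| \le 2Cs$ on $[0,s_0]$, derive $|\Xi(s)| = O(h^\sigma)$ (the integral of $h\langle s\rangle^{1-\sigma}$ over $[0,T/h]$ is $O(h^\sigma)$, after a Gronwall absorption of the $\langle s\rangle^{-1-\sigma}|X|$ term), then $|X(s)| = O(h^\sigma s) = o(s)$, which reopens the assumption with room to spare. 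This simultaneously yields $|\tilde x(s)| \ge c s - C$ and $|\tilde x(s)| \le Cs + C$ and, combined with $|\xi(s)\to\xi_+|$ and the closeness of $\tilde\xi$ to $\xi$, gives $\tilde\xi(T/h,x_0,\xi_0;h) \to \xi_+(x_0,\xi_0)$ and the bound $|\tilde\xi(s,x,\xi;h) - \xi_+(x,\xi)| \le C\langle s\rangle^{-\sigma}$.

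I expect the main obstacle to be the bookkeeping in the complex setting: the flow is run with complex initial data and the coefficients $a_\alpha$ are only holomorphic on the conic strip $\Gamma_\nu$, so one must check that $\tilde x(s,x,\xi;h)$ stays inside $\Gamma_\nu$ — i.e.\ that $|\Im \tilde x(s)|$ does not grow faster than $\nu\langle\Re\tilde x(s)\rangle$ — throughout $[0,T/h]$. This is where the smallness of the imaginary part of the initial data and of the perturbation is used: since $H_{q_0}$ preserves the reality of a real initial condition and the nonlinearity is Lipschitz on $\Gamma_{\nu/2}$ say, a further Gronwall estimate on $\Im\tilde x$ keeps the trajectory in a slightly smaller strip, provided $h$ is small and $(x,\xi)$ close enough to the real point $(x_0,\xi_0)$. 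Once the trajectory is confined, all the symbol estimates above are legitimate and the argument closes; the remaining points (independence of the limit $\xi_+$ from $T$, and the convergence as $h\to 0_+$) are immediate from the $\langle s\rangle^{-\sigma}$ bound evaluated at $s = T/h$.
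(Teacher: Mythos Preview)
Your plan is correct and follows the same overall strategy as the paper: treat the $q$-flow as a perturbation of the $q_0$-flow, set up differential inequalities for $(\tilde x - y,\,\tilde\xi - \eta)$, and close them by a Gronwall-type argument exploiting the $\langle s\rangle^{-1-\sigma}$ decay coming from $\partial_x q_0 = O(\langle x\rangle^{-1-\sigma})$. The paper's system is exactly your schematic one (with the sharper coefficients $\langle s\rangle^{-2-\sigma}$ in front of $|X|$ and $h\langle s\rangle^{-\sigma}$ for the forcing in $\dot\Xi$, which only helps), and it obtains $|\tilde\xi(s)-\eta(s)| = O(h\langle s\rangle^{1-\sigma})$, hence $O(h^\sigma)$ at $s=T/h$, just as you predict.

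The one organizational difference worth noting: you propose a bootstrap to get the linear position bounds $|\tilde x(s)|\sim s$ simultaneously with the comparison, whereas the paper first obtains them \emph{directly} from the identity
\[
\frac{d^2}{ds^2}|\tilde x(s)|^2 = 2|\tilde\xi(s)|^2 + U(\tilde x,\tilde\xi),\qquad U = O(\langle \tilde x\rangle^{-\sigma}) \ \text{on } [0,T/h],
\]
together with energy conservation (giving $|\tilde\xi|+|\tilde\xi|^{-1}$ bounded). This yields convexity of $|\tilde x(s)|^2$ once $|\tilde x|$ is large, hence the lower bound $|\tilde x(s)|\geq s/C - C$ without any reference to the unperturbed flow. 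Only afterwards does the paper compare with $(y,\eta)$. Your bootstrap is fine, but the convexity route sidesteps the mild circularity of needing $|\tilde x(s)|\gtrsim s$ to justify the $\langle s\rangle^{-1-\sigma}$ coefficients in the very inequalities that produce it. Your remark about keeping the complex trajectory inside $\Gamma_\nu$ is well taken; the paper handles this in one line (``the flow exists for all $s\geq 0$ if the starting point is close enough to the real''), and your Gronwall-on-$\Im\tilde x$ sketch is the standard way to make that precise.
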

\begin{proof} This proof is rather standard, and we just sketch it (see, e.g., \cite{Na4} for more details). At first, we observe that Assumtion A implies that the flow exists for all $s\geq 0$ if the starting point is close enough to the real, and (omitting the dependence with respect to $x,\xi,h$ in the notations), we compute,
\be
\label{d2s2}
\frac{d^2}{ds^2}|\tilde x (s )|^2=2|\tilde\xi (s)|^2+U(\tilde x(s),\tilde\xi (s)),
\ee
with,
$$
U(x,\xi )={\cal O}(\la x\ra^{-\sigma}|\xi|^2+h\la x\ra^{1-\sigma}|\xi| +h^2\la x\ra^{2-\sigma})
$$
uniformly. Now, by the conservation of energy and Assumption A, we see that $|\tilde \xi (s)|+|\tilde \xi (s)|^{-1}$ remains uniformly bounded, while $\tilde x (s) ={\cal O}(\la s\ra)$ uniformly. Therefore, for $s\in [0,T/h]$, we deduce from (\ref{d2s2}) and Assumption A,
\be
\label{x(s)convex}
\frac{d^2}{ds^2}|\tilde x (s )|^2\geq C^{-1}-Ch^{\sigma}-C\la \tilde x(s)\ra^{-\sigma},
\ee
for some constant $C>0$ and $(x,\xi)\in\co^{2n}$ close enough to $(x_0,\xi_0)$. Since $(x_0,\xi_0)$ is forward non trapping, there necessarily exists $s_0>0$ such that $\la \tilde x(s_0)\ra^{\sigma}> 3C^2$ and $\partial_s |\tilde x(s_0)| >0$ (it is true at $(x,\xi)=(x_0,\xi_0)$, and thus also in a complex neighborhood of this point by continuity of the flow). Then, by (\ref{x(s)convex}), and for $h$ small enough, we deduce that $|\tilde x(s)|^2$ is a convex function of $s$ in $[s_0, T/h]$, and that $|\tilde x(s)|^2\geq (s-s_0)^2/2C$ for all $s\in[s_0,T/h]$. 

\bigskip
The same arguments apply to the flow $(y(s),\eta (s)):=\exp tH_{q_0}(x,\xi)$ of the principal symbol $q_0$ (independent of $h$) of $H$, and then we can compare $(\tilde x(s), \tilde\xi (s))$ with $(y(s),\eta (s))$. A direct computation, as in the proof of \cite{Na4} Proposition 2.9, leads to,
\begin{eqnarray*}
&& |\partial_s(\tilde x -y ) |\leq C(|\tilde\xi -\eta |+\la s\ra^{-1-\sigma}|\tilde x-y| +h\la s\ra^{1-\sigma});\\
&& |\partial_s(\tilde \xi -\eta)| \leq C(\la s\ra^{-1-\sigma}|\tilde\xi -\eta |+\la s\ra^{-2-\sigma}|\tilde x -y|+ h\la s\ra^{-\sigma}+h^2\la s\ra^{1-\sigma}).
\end{eqnarray*}
For $s\in[0,T/h]$, we set  $g(s) =C\int_s^{+\infty} \la s'\ra^{-1-\sigma}ds'$ and  $(\alpha, \beta):=e^{g}(|\tilde x-y|, |\tilde\xi -\eta|)$. For almost all $s\in [0,T/h]$, we obtain,
\begin{eqnarray}
\label{eqalpha}
&& \partial_s\alpha \leq C(\beta+h\la s\ra^{1-\sigma});\\
&& \partial_s\beta \leq C(\la s\ra^{-2-\sigma}\alpha+ h\la s\ra^{-\sigma}).
\end{eqnarray}
Setting 
\begin{eqnarray*}
Y&:=& \int_0^s\la s'\ra^{-1-\sigma/2}(\partial_s\alpha)(s')ds' + \beta\\
&{}=&\la s\ra^{-1-\sigma/2}\alpha+\beta+(1+\sigma/2)\int_0^s\la s'\ra^{-2-\sigma/2}\frac{s'}{\la s'\ra}\alpha (s')ds',
\end{eqnarray*}
we find,
$$
\partial_sY\leq C\la s\ra^{-1-\sigma/2}Y +2Ch\la s\ra^{-\sigma} \,\, \mbox{ a.e.},
$$
and thus, since $Y(0)=0$,
$$
Y(s)={\cal O}(h\la s\ra^{1-\sigma}).
$$
In particular, this gives $\tilde\xi (s) =\eta(s)+{\cal O}(h\la s\ra^{1-\sigma})$, and thus $\tilde\xi (T/h) =\eta(T/h)+{\cal O}(h^{\sigma})\rightarrow \xi_+(x,\xi)$ as $h\rightarrow 0_+$. Moreover, since $|\eta (s)-\xi_+(x,\xi)|={\cal O}(\la s\ra^{-\sigma})$, we also have $|\tilde\xi (s)-\xi_+(x,\xi)|={\cal O}(h\la s\ra^{1-\sigma}+\la s\ra^{-\sigma})={\cal O}(\la s\ra^{-\sigma})$. 
\end{proof}
\begin{rem} We also deduce from (\ref{eqalpha}) and the estimate on $\beta$ that $|\tilde x(s) -y(s)|={\cal O}(h\la s\ra^{2-\sigma})={\cal O}(h^\sigma\la s\ra)$.
\end{rem}

\section{Construction of the modified free evolution}\label{sec-modevol}

\subsection{The  modified free classical  evolution}
\label{secmodevol}

We first show,
\begin{lem} For any $\delta>0$, there exists  $R_\delta>0$, such that, for all $\xi\in\re^n$ with $|\xi|\geq\delta$, the point $X_\delta(\xi):= (R_\delta \xi /|\xi|,\xi)$ is forward non trapping. Moreover, 
for any $s\geq 0$ and
  $h>0$ small enough, the application,
$$
J_{s,\delta} \, :\, \xi \mapsto \tilde\xi (s, X_\delta(\xi) ;h)
$$
is a diffeomorphism from $\{|\xi|>\delta\}$ to its image, and there exists $\delta'=\delta'(\delta)\rightarrow 0_+$ as $\delta\rightarrow 0_+$ such that,
\be
\label{imJdelta}
J_{s,\delta} (|\xi|>\delta)\supset \{|\xi|> \delta'\}.
\ee
\end{lem}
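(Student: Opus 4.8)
The plan is to establish the three assertions of the lemma in turn, using the flow estimates of Lemma~\ref{estflow} and a degree-theoretic (or inverse-function) argument for the surjectivity statement.

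\textbf{Step 1: forward non-trapping of $X_\delta(\xi)$.}
First I would show that for $|\xi|\geq\delta$ and $R_\delta$ large enough, the point $(R_\delta\xi/|\xi|,\xi)$ is forward non-trapping for the flow of $q_0$ (the $h$-independent principal symbol), uniformly in $\xi$. This is the standard observation that ``outgoing'' points escape: from the identity $\frac{d^2}{ds^2}|y(s)|^2 = 2|\eta(s)|^2 + U(y(s),\eta(s))$ with $U = \mathcal{O}(\la y\ra^{-\sigma}|\eta|^2)$, and using energy conservation to keep $|\eta(s)|+|\eta(s)|^{-1}$ bounded below and above in terms of $|\xi|\geq\delta$, one sees that once $|y|$ is large and $\partial_s|y|\geq 0$, the function $|y(s)|^2$ is strictly convex and hence $|y(s)|\to\infty$. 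At $s=0$ we have $\partial_s|y|^2 = 2\la x,\xi\ra = 2R_\delta|\xi| > 0$ and $\la y\ra = \la R_\delta\ra$ arbitrarily large, so choosing $R_\delta$ with $\la R_\delta\ra^\sigma > 3C^2$ (as in the proof of Lemma~\ref{estflow}) makes $X_\delta(\xi)$ forward non-trapping, with all constants uniform on $|\xi|\geq\delta$; one then transfers this to the flow of $q$ for $h$ small by the comparison estimates already proved.

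\textbf{Step 2: $J_{s,\delta}$ is a diffeomorphism onto its image.}
Here I would use that $\xi\mapsto\tilde\xi(s,X_\delta(\xi);h)$ is, for each fixed $s\geq 0$ and small $h$, a smooth map between open subsets of $\re^n$ that is a close perturbation of the identity in an appropriate sense. The key input is Lemma~\ref{estflow}: $\tilde\xi(s,x,\xi;h) = \xi_+(x,\xi) + \mathcal{O}(\la s\ra^{-\sigma})$, and more precisely $\xi_+(X_\delta(\xi))$ depends smoothly on $\xi$; combined with the fact that for the purely radial incoming-free data $X_\delta(\xi)$ one has $\xi_+(X_\delta(\xi)) = \xi + o_{R_\delta\to\infty}(1)$ (the scattering is small when the impact data is far out along the direction of $\xi$), injectivity follows once $R_\delta$ is large, via the Jacobian being close to the identity. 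Concretely: $d_\xi J_{s,\delta}$ is invertible (inverse function theorem gives local diffeomorphism), and global injectivity comes from a Lipschitz estimate $|J_{s,\delta}(\xi)-J_{s,\delta}(\xi')|\geq c|\xi-\xi'|$ obtained by integrating the variational equation for $\partial_\xi\tilde\xi$ along the flow, again using the $\la s\ra^{-1-\sigma}$-type decay of the off-diagonal terms in the linearized flow (exactly the mechanism producing $g(s) = C\int_s^\infty\la s'\ra^{-1-\sigma}ds'$ in the proof of Lemma~\ref{estflow}).

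\textbf{Step 3: the image contains $\{|\xi|>\delta'\}$.}
For the surjectivity onto a slightly smaller cone, I would argue by a continuity/degree argument: $J_{s,\delta}$ maps $\{|\xi|>\delta\}$ into a set which, by Step~2's closeness-to-identity, contains $\{|\xi| > \delta'\}$ for a suitable $\delta' = \delta'(\delta,R_\delta)$ that can be made small by taking $\delta$ (hence the relevant perturbation) small. Precisely, since $J_{s,\delta}$ is a homeomorphism onto its image and differs from $\xi\mapsto\xi$ by at most $\e(\delta)$ on, say, $\{|\xi| = 2\delta'\}$ with $\e(\delta)\to 0$, a standard degree argument (Rouché-type on the sphere $\{|\xi| = 2\delta'\}$) shows every point of $\{|\xi| < 2\delta'\}\cap\{|\xi|>\delta'\}$ and beyond is attained, giving \eqref{imJdelta}.

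\textbf{Main obstacle.}
The delicate point is the \emph{uniformity} in $\xi$ (and in $s\in[0,\infty)$, $h$ small) of all the flow estimates: Lemma~\ref{estflow} is stated for a fixed non-trapping point and $s\in[0,T/h]$, whereas here I need the estimates uniformly over the whole family $\{X_\delta(\xi): |\xi|>\delta\}$ and, for the diffeomorphism claim, for \emph{all} $s\geq 0$, not just $s = \mathcal{O}(h^{-1})$. Establishing that the non-trapping constants can be chosen uniformly on this family — which works precisely because the $X_\delta(\xi)$ are uniformly outgoing — and that the variational (linearized-flow) estimates likewise hold uniformly, is the technical heart of the argument; everything else is then a routine inverse-function-theorem plus degree-theory packaging.
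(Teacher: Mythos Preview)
Your outline is sound and follows the same overall strategy as the paper, but there is one genuine slip in Step~2 and one unnecessary complication in Step~3.

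The slip: you identify the smallness mechanism as the $\la s\ra^{-1-\sigma}$ decay used in Lemma~\ref{estflow}, citing $g(s)=C\int_s^\infty\la s'\ra^{-1-\sigma}ds'$. But integrating that from $s=0$ gives only an $\mathcal O(1)$ bound, not a small one, so it cannot show $d_\xi J_{s,\delta}$ is close to the identity uniformly in $s$. The correct mechanism, which the paper uses, is that the trajectory starting at $X_\delta(\xi)$ satisfies $|\tilde x(s)|\gtrsim R_\delta+s$ for all $s\ge 0$ (it starts far out and is outgoing), so the coefficients in the linearized flow decay like $\la R_\delta+s\ra^{-1-\sigma}$; integrating then yields
\[
\bigl|d_\xi\tilde\xi(s,X_\delta(\xi);h)-I\bigr|=\mathcal O(R_\delta^{-\sigma}),\qquad
\bigl|\tilde\xi(s,X_\delta(\xi);h)-\xi\bigr|=\mathcal O(R_\delta^{-\sigma}|\xi|),
\]
uniformly in $s,\xi,h$. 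You clearly have the right intuition (``the scattering is small when the impact data is far out along the direction of $\xi$''), but the estimate you wrote down does not deliver it; the smallness parameter is $R_\delta^{-\sigma}$, not anything coming from $\la s\ra$.

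The complication: once those two estimates are in hand, the paper dispenses with both the detour through $\xi_+$ and the degree/Rouch\'e argument. The $C^1$ estimate makes $J_{s,\delta}$ a local diffeomorphism; combined with the $C^0$ estimate it gives global injectivity on $\{|\xi|>\delta\}$ by an elementary mean-value argument, and the $C^0$ estimate alone shows directly that the image contains $\{|\xi|>(1+CR_\delta^{-\sigma})\delta\}$, so one may take $\delta'=(1+CR_\delta^{-\sigma})\delta$. Your degree-theory packaging would work, but it is not needed.
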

\begin{proof} The existence of $R_\delta$ such that $X_\delta(\xi)$ is non trapping is very standard, and comes form the fact that the point $X_\delta(\xi)$ is in the out-going region (because $R_\delta \xi/|\xi|\cdot \xi = R_\delta|\xi|=\big| R_\delta \xi/|\xi|\big|\cdot |\xi |$), and that the norm of its position is $R_\delta >>1$ (see, e.g., \cite{Na4}). Then, arguments similar to (but simpler than)  those used in the proof of Lemma \ref{estflow} show that one has,
\begin{eqnarray*}
&&\big|d_\xi \tilde\xi (s, X_\delta(\xi) ;h)-I\big| ={\cal O}(R_\delta^{-\sigma});\\
&&\big|\tilde\xi (s, X_\delta(\xi) ;h)-\xi\big| ={\cal O}(R_\delta^{-\sigma}|\xi|),
\end{eqnarray*}
uniformly with respect to $s$, $\xi$, $R_\delta$ and $h$.
Therefore, if $R_\delta$ is large enough, we see that $J_{s,\delta}$ is a diffeomorphism from $\{|\xi| >\delta\}$ to its image, and that this one contains $\{ |\xi|>(1+CR_{\delta}^{-\sigma})\delta\}$ for some constant $C>0$ independent of $\delta$. Thus the result follows.
\end{proof}
Now, we fix $\delta_0>0$ arbitrarily small, and , for $s\geq 0$ and $|\xi|\geq \delta_0$, we set,
\begin{eqnarray*}
&& \hat x(s, \xi ):= \tilde x(s,R_\delta\xi/|\xi|, J_{s,\delta}^{-1}(\xi)),
\end{eqnarray*}
where $\delta>0$ is sufficiently small in order to have (\ref{imJdelta}) with $\delta'=\delta_0$.
Then, we define,
\be
\label{defW}
W(s,\xi) := R_\delta |\xi| +\int_0^s q(\hat x(s',\xi), \xi)ds'.
\ee
By standard Hamilton-Jacobi theory (see, e.g., \cite{ReSi, Ro, Na4}), we know that $W$ solves the equation,
\be
\label{eikonal2}
\frac{\partial W}{\partial s} =q(\partial_\xi W, \xi ;h),
\ee
and that one has,
\be
\label{lienWflot}
\partial_\xi W ( s, \xi) = \hat x (s,\xi).
\ee
(Indeed,  one easily verifies that  $\hat x (s,\xi)$ is solution of the equation $\partial_s \hat x (s,\xi)= {}^t\partial_\xi \hat x\, \partial_x q(\hat x ,\xi) +\partial_\xi q(\hat x ,\xi)$, with $\hat x (0,\xi)=\partial_\xi (R_\delta |\xi|)$, so that (\ref{lienWflot}) follows by differentiating (\ref{defW}) in $\xi$, and (\ref{eikonal2}) as well by derivating (\ref{defW}) in $s$.)

Moreover, $W$ is analytic on $\{|\xi|> \delta_0\}$, it is real if $\xi$ is real, and we have,
\begin{lem}\sl 
\label{modclassevol}
Let $(x,\xi)\in\re^{2n}$ be forward non trapping with $|\xi_+(x,\xi)|>\delta_0$. Then,  there exists $x_+(x,\xi)\in \re^n$ such that, for any $T>0$, the quantity,
$$
\tilde x(T/h,x,\xi;h) - \partial_\xi \widetilde W(T/h,\tilde\xi (T/h, x,\xi) ;h)
$$
tends to $x_+(x,\xi )$ as $h\rightarrow 0_+$. 
\end{lem}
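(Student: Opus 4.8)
The plan is to compare the two flows $(\tilde x(s,x,\xi;h), \tilde\xi(s,x,\xi;h))=\exp sH_q(x,\xi)$ starting at the given non-trapping point $(x,\xi)$ and the flow generating $W$, namely $(\hat x(s,\eta), \eta)$ — which is constant in the $\xi$-variable and which, by \eqref{lienWflot}, satisfies $\hat x(s,\eta)=\partial_\xi W(s,\eta)$ — evaluated at the ``correct'' moving frequency. Write $\eta(s):=\tilde\xi(s,x,\xi;h)$ and set $\zeta(s):=\partial_\xi W(s,\eta(s))$. The quantity whose limit we want is $\tilde x(T/h,x,\xi;h)-\partial_\xi\widetilde W(T/h,\eta(T/h);h)$, which by the definition $\widetilde W(s,\zeta)=W(s,\zeta)-W(0,\zeta)$ equals $\tilde x(T/h)-\zeta(T/h)+\partial_\xi W(0,\eta(T/h))$. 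Since $\eta(T/h)\to\xi_+(x,\xi)$ by Lemma \ref{estflow} and $\partial_\xi W(0,\cdot)=R_\delta\,\cdot/|\cdot|$ is continuous near $\xi_+$ (using $|\xi_+|>\delta_0$), the last term converges automatically; so everything reduces to showing that $D(s):=\tilde x(s)-\zeta(s)$ has a limit as $s=T/h\to\infty$, $h\to 0_+$, and that this limit does not depend on $T$.

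First I would derive the differential equation for $D(s)$. Both $\tilde x$ and $\hat x$ are integral curves of $H_q$ in the $x$-component: $\partial_s\tilde x=\partial_\xi q(\tilde x,\tilde\xi;h)$, while $\zeta(s)=\partial_\xi W(s,\eta(s))=\hat x(s,\eta(s))$ satisfies, using the equation $\partial_s\hat x=\,{}^t\partial_\xi\hat x\,\partial_x q(\hat x,\eta)+\partial_\xi q(\hat x,\eta)$ noted after \eqref{lienWflot} together with $\partial_s\eta=-\partial_x q(\tilde x,\tilde\xi;h)$, a closed expression. The key algebraic point — the same cancellation that makes $\zeta(s)=\partial_\xi W(s,\tilde\xi(s))$ the natural modifier — is that $\frac{d}{ds}\partial_\xi W(s,\eta(s)) = (\partial_s\partial_\xi W)(s,\eta(s)) + (\partial_\xi^2 W)(s,\eta(s))\,\dot\eta(s)$, and differentiating the eikonal equation \eqref{eikonal2} in $\xi$ gives $\partial_s\partial_\xi W=\partial_x q(\partial_\xi W,\xi)\,\partial_\xi^2 W+\partial_\xi q(\partial_\xi W,\xi)$; combined with $\dot\eta(s)=-\partial_x q(\tilde x(s),\eta(s);h)$ the $\partial_\xi^2 W$ terms do \emph{not} cancel exactly, but produce a term $\partial_\xi^2 W(s,\eta(s))\,[\partial_x q(\zeta(s),\eta(s))-\partial_x q(\tilde x(s),\eta(s))]$ which is controlled by $|D(s)|$ times a decaying weight. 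Putting this together I expect an estimate of the form
\[
|\partial_s D(s)| \leq |\partial_\xi q(\tilde x,\tilde\xi;h)-\partial_\xi q(\zeta,\eta;h)| + C\la s\ra^{-1-\sigma}|D(s)|,
\]
and then bounding the first difference using Assumption A (derivatives of $a_{j,k},a_j,a_0$ decay like $\la x\ra^{-1-\sigma}$, $\la x\ra^{-\sigma}$, $\la x\ra^{1-\sigma}$ after the $h$-weights) together with $|\tilde\xi-\eta|=0$ in fact (they are equal!) and $|\tilde x|,|\zeta|\geq s/C-C$ from Lemma \ref{estflow} and the analogue for $\hat x$, one gets $|\partial_s D(s)|\leq C\la s\ra^{-1-\sigma}|D(s)| + C\la s\ra^{-1-\sigma}$, uniformly for $s\in[0,T/h]$, $h$ small.

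From such a differential inequality, Gronwall gives that $|D(s)|$ stays bounded, and then $\partial_s D(s)={\mathcal O}(\la s\ra^{-1-\sigma})$ is integrable, so $D(s)$ is Cauchy as $s\to\infty$ and converges to some $x_+(x,\xi)$; moreover the limit is manifestly independent of the cut-off parameter $T$ since the defining ODE and initial data do not see $T$ (for $s\le T/h$ the curve is just a restriction of the $T=\infty$ curve, the flow being global by Assumption A). Uniformity in $h$ is what lets us replace $s\to\infty$ by $s=T/h$, $h\to 0_+$. The one genuinely delicate point is that the estimates of Lemma \ref{estflow} are only stated for $s\le T/h$; since here we really want $s\to\infty$ I would instead invoke the corresponding global-in-time estimates for the $h$-independent flow $\exp sH_{q_0}$ (which are classical, cf.\ \cite{Na4}) to get the lower bound $|\tilde x(s)|\gtrsim s$ and the decay of the flow differences for all $s\ge 0$, and then only use $h\to 0_+$ at the very end to identify the limit of $D(T/h)$ with $\lim_{s\to\infty}D(s)$. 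The main obstacle is thus bookkeeping: carefully separating the genuinely $h$-independent long-time convergence (which yields existence of $x_+$) from the semiclassical limit, and making sure the weight $\la s\ra^{-1-\sigma}$ really appears in front of $|D(s)|$ — i.e.\ that the potentially dangerous quadratic-growth term $h^2 a_0$ in $q$ only contributes through its $x$-derivative, which is ${\mathcal O}(h^2\la x\ra^{1-\sigma})={\mathcal O}(h^2 s^{1-\sigma})={\mathcal O}(\la s\ra^{-1-\sigma})$ for $s\le T/h$, exactly borderline, so the estimate is tight and must be tracked with care.
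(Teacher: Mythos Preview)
Your approach is essentially the right one and is in the spirit of \cite{Na4}, Proposition~2.12, which is what the paper simply cites in lieu of a proof.  The derivation of the ODE for $D(s)=\tilde x(s)-\partial_\xi W(s,\tilde\xi(s))$ and of the differential inequality $|\partial_s D|\le C\la s\ra^{-1-\sigma}(1+|D|)$ is correct, including the cancellation you identify via differentiating the eikonal equation.

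There is, however, a real muddle in your final limit step.  You write that ``$D(s)$ is Cauchy as $s\to\infty$'' and then want to ``identify the limit of $D(T/h)$ with $\lim_{s\to\infty}D(s)$''.  But $D(s)=D(s;h)$ depends on $h$ through both the $q$-flow and $W$, and your differential inequality genuinely fails for $s\gg T/h$: the contribution $h^2\partial_x a_0(\tilde x)={\mathcal O}(h^2\la s\ra^{1-\sigma})$ is only ${\mathcal O}(\la s\ra^{-1-\sigma})$ when $hs\le T$, and the lower bound $|\tilde x(s)|\gtrsim s$ from Lemma~\ref{estflow} likewise uses $s\le T/h$ (the $Ch^\sigma$ term in \eqref{x(s)convex} comes from $h^2\la\tilde x\ra^{2-\sigma}$ with $\la\tilde x\ra\lesssim s\le T/h$).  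So there is no reason for $\lim_{s\to\infty}D(s;h)$ to exist at fixed $h>0$, and your proposed fix --- ``invoke the global-in-time estimates for $\exp sH_{q_0}$ to get $|\tilde x(s)|\gtrsim s$ for all $s$'' --- conflates the $q_0$-flow $y(s)$ with the $q$-flow $\tilde x(s)$.

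The clean way out (and this is what \cite{Na4} does) is a two--step limit.  From your inequality on $[s_1,T/h]$ you get $|D(T/h;h)-D(s_1;h)|\le C_T\,s_1^{-\sigma}$, uniformly in small $h$.  At \emph{fixed} $s_1$, letting $h\to 0_+$ sends the $q$-flow to the $q_0$-flow and $W(\cdot\,;h)$ to the $h$-independent eikonal $W_0$ built from $q_0$, so $D(s_1;h)\to D_0(s_1):=y(s_1)-\partial_\xi W_0(s_1,\eta(s_1))$.  For $D_0$ your differential inequality holds for \emph{all} $s\ge s_1$ (now $q=q_0$ has no $h$-terms), so $D_0(s_1)\to x_+$ as $s_1\to\infty$.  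Combining, $\limsup_{h\to 0}|D(T/h;h)-x_+|\le C_T s_1^{-\sigma}+|D_0(s_1)-x_+|\to 0$ as $s_1\to\infty$, and $x_+$ is manifestly independent of $T$.  This is the ``bookkeeping'' you allude to, but it is not optional: the order of limits matters and must be argued as above.
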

\begin{proof}  The proof  is identical to that of \cite{Na4}, Proposition 2.12, and we omit it.
\end{proof}

\subsection{The modified  free quantum evolution}
\label{secmodevolq}

For any $z_0\in\co^n$ with $|\Im z_0| >\delta_0$,
the holomorphic function $\zeta\mapsto W(s,\zeta ;h)$ (defined in a complex neighborhood of  $-\Im z$)  can be quantized into an analytic pseudodifferential operator $W(s,hD_z)$ acting on $H_{\Phi_0, z_0}$,  by the formula,
$$
W(s,hD_z)v(z):=\frac1{(2\pi h)^n}\int_{\gamma (z)}e^{i(z-y)\zeta /h}W(s ,\zeta) v(y)dyd\zeta,
$$
where $\gamma (z)$ is the $2n$-complex contour,
$$
\gamma (z) \, :\, \zeta = -\Im z +iR(\overline{z-y}) \,\, ;\,\, |z-y| < r,
$$
with $r >0$ constant, sufficiently (and arbitrarily) small and $R\geq 1$ arbitrarilly large.

\bigskip
The purpose of this section is to construct  a Fourier integral operator,
$G_0(s)$ between some Sj\"ostrand's space $H_{\Phi_0, Z_s(z_0)}$ and  $H_{\Phi_0, z_0}$,
such that $Z_0(z_0)=z_0$, and,
$$
ih\frac{\partial G_0}{\partial s} +  (\partial_s W)(s,hD_z) G_0(s) \sim 0\quad ; \quad G_0(0)\sim I,
$$
uniformly for $s\in [0, T/h]$ ($T>0$ arbitrary). Since $-\partial_sW=-\partial_sW(s,\zeta)$ does not depend on $z$, its classical flow is easily determined as, 
$$
U_s\, :\, (z,\zeta )\mapsto (z - \partial_\zeta \widetilde W (s,\zeta ) , \zeta ),
$$
where we have set $\widetilde W(s,z):=W(s,\eta) -W(0,\eta )$. Therefore, in order to obtain $z_0$ as for the final base-point, the initial base-point  should be,
\be
\label{defZs}
Z_s(z_0):= z_0 +\partial_\zeta \widetilde W (s,-\Im z_0).
\ee
Moreover, we see that $G_0(s)$ can be taken of the form,
\be
\label{defG0}
G_0(s)v(z;h) =\frac1{(2\pi h)^n}\int_{\gamma (s,z)} e^{i(z-y)\eta /h +i\widetilde W(s,\eta)/h}v(y)dyd\eta ,
\ee
where it only remains to determine the contour $\gamma (s,z)$. We have,
\begin{lem}\sl
For any $z\in \co^n\cap\{ |\Im z| >\delta_0\}$ and $s\geq 0$, the application
$$
\Psi_{s,z} : (y,\eta )\mapsto \Phi_0(y) - \Im ((z-y)\eta +\widetilde W(s,\eta ))
$$
admits a saddle point at $U_s^{-1}(z, -\Im z)$ with critical value $\Phi_0(z)$.
\end{lem}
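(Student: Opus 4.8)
The plan is to regard $\Psi_{s,z}$ as a real-analytic function of the real variables $(y_1,y_2,\eta_1,\eta_2)$, where $y=y_1+iy_2$ and $\eta=\eta_1+i\eta_2$ with $y_j,\eta_j\in\re^n$, to solve the stationarity equations, then to evaluate $\Psi_{s,z}$ at the critical point found and finally to check that the real Hessian there is nondegenerate.

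First I would write $\phi(y,\eta):=(z-y)\eta+\widetilde W(s,\eta)$, which is holomorphic in $(y,\eta)$ on $\co^n_y$ times a complex neighbourhood of $\{|\eta|>\delta_0\}$; here the assumption $|\Im z|>\delta_0$ is precisely what guarantees that $-\Im z$ lies in the domain of $\widetilde W(s,\cdot)$. Since $\phi$ is holomorphic one has $\partial_{y_1}\Im\phi=\Im\partial_y\phi$ and $\partial_{y_2}\Im\phi=\Re\partial_y\phi$ (and similarly in $\eta$), while $\partial_{y_1}\Phi_0=0$ and $\partial_{y_2}\Phi_0=\Im y$. Hence $\nabla\Psi_{s,z}=0$ reduces to $\Im\partial_y\phi=0$, $\Im y=\Re\partial_y\phi$ and $\partial_\eta\phi=0$; using $\partial_y\phi=-\eta$ and $\partial_\eta\phi=(z-y)+\partial_\eta\widetilde W(s,\eta)$ these become $\Im\eta=0$, $\Re\eta=-\Im y$ and $y=z+\partial_\eta\widetilde W(s,\eta)$. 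Since $\widetilde W(s,\cdot)$ is real on $\re^n$, so that $\partial_\eta\widetilde W(s,\eta)\in\re^n$ for $\eta\in\re^n$, the first two relations say that $\eta$ is real with $\eta=-\Im y$, and then taking imaginary parts in the third gives $\Im y=\Im z$; therefore $\eta=-\Im z$ and $y=z+\partial_\eta\widetilde W(s,-\Im z)=Z_s(z)$ (see (\ref{defZs})). By the definition of $U_s$ this point is exactly $(y,\eta)=U_s^{-1}(z,-\Im z)$, and it is the unique critical point of $\Psi_{s,z}$ in the region considered.

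Next I would evaluate the critical value: there $z-y=-\partial_\eta\widetilde W(s,-\Im z)$ and $\eta=-\Im z$ are both real, so $\Im[(z-y)\eta]=0$ and $\Im\widetilde W(s,\eta)=0$, while $\Im y=\Im z$ gives $\Phi_0(y)=\Phi_0(z)$; thus $\Psi_{s,z}=\Phi_0(z)$ at $U_s^{-1}(z,-\Im z)$, as asserted. For nondegeneracy I would split $\Psi_{s,z}=\Psi^0_{s,z}-\Im\widetilde W(s,\eta)$, where $\Psi^0_{s,z}(y,\eta):=\Phi_0(y)-\Im((z-y)\eta)$ is the function attached to the identity FBI kernel. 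A direct computation shows that $\Psi^0_{s,z}$ is a quadratic polynomial in $(y_1,y_2,\eta_1,\eta_2)=(a,b,c,d)$ whose Hessian is the linear map $(a,b,c,d)\mapsto(d,\,b+c,\,b,\,a)$, obviously invertible (and of signature $(2n,2n)$). The Hessian of $-\Im\widetilde W(s,\eta)$ at the real point $\eta=-\Im z$ occupies only the $(\eta_1,\eta_2)$ block and equals $\begin{pmatrix}0&-A\\-A&0\end{pmatrix}$ with $A:=\partial_\eta^2\widetilde W(s,-\Im z)$ real symmetric; adding it turns the Hessian of $\Psi_{s,z}$ into $(a,b,c,d)\mapsto(d,\,b+c,\,b-Ad,\,a-Ac)$, which is again injective for every $A$ (from the first component $d=0$, then the third $b=0$, then the second $c=0$, then the fourth $a=0$). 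Replacing $A$ by $tA$ keeps it injective for all $t\in[0,1]$, so the signature remains $(2n,2n)$: $U_s^{-1}(z,-\Im z)$ is a genuine nondegenerate saddle point.

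The computations above are entirely routine, and I do not expect any real obstacle; the only point that needs a little care is bookkeeping the two kinds of differentiation — the holomorphic (Wirtinger) derivatives of the phase $\phi$ versus the ordinary real-variable derivatives of the non-holomorphic weight $\Phi_0$. The upshot is that near $U_s^{-1}(z,-\Im z)$ the function $\Psi_{s,z}$ has the same normal form as the phase of the identity operator, deformed only by a harmless quadratic term depending on $\eta$ alone; this is exactly what will allow the stationary-phase/steepest-descent argument to realise $G_0(s)$ of (\ref{defG0}) as an FIO between the Sj\"ostrand spaces with the stated base points.
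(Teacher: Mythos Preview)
Your argument is correct and follows the same line as the paper's proof: you solve the stationarity equations using that $\widetilde W$ is real on the real to obtain $(y,\eta)=U_s^{-1}(z,-\Im z)$ with critical value $\Phi_0(z)$, then establish nondegeneracy via an explicit Hessian computation and a homotopy ($A\rightsquigarrow tA$) to pin down the signature $(2n,2n)$. The paper proceeds identically for the critical point and value, and for the saddle property uses the equivalent homotopy in $s$ (noting $\widetilde W(0,\cdot)=0$) rather than in your parameter $t$; the two deformations are interchangeable since both reduce the $\eta$-block perturbation to zero while keeping the Hessian nondegenerate.
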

\begin{proof}
We compute,
\begin{eqnarray*}
&& \nabla_y\left( \Phi_0(y) - \Im ((z-y)\eta +\widetilde W(s,\eta ) )\right) = -\frac{i}2(\Im y + \eta);\\
&& \nabla_\y\left( \Phi_0(y) - \Im ((z-y)\eta +\widetilde W(s,\eta ))\right) =\frac{i}2 (z-y +\nabla_\y \widetilde W(s,\y)),
\end{eqnarray*}
so that any critical point must verify $\eta = -\Im y$ and $y = z +\nabla_\y \widetilde W(s,\y)$.
In particular, since $\eta$ is real, one must have $\Im y =\Im z$, and therefore,
$$
\eta = -\Im z\quad ; \quad y=z +\nabla_\y \widetilde W(s,-\Im z),
$$
that is, $(y,\eta )= U_s^{-1}(z, -\Im z)$.
Conversely, we  see that $U_s(z, -\Im z)$ is a critical point of $\Phi_0(y) - \Im ((z-y)\eta +\widetilde W(s,\eta ))$. Moreover, since $W$ is real on the real, we have  $\Im\nabla_\eta^2\widetilde W(s,\eta)= 0$ for $\eta=-\Im z$, and one easily deduces that the critical point is non degenerate for all $s\geq 0$. Since, for $s=0$, it is a saddle point, by continuity it remains a saddle point for all $s\geq 0$, and the critical value is $\Phi_0(z +\nabla_\y \widetilde W(s,-\Im z)) =\Phi_0(z)$.

\end{proof}

Thus, we could take for $\gamma (s, z)$ any good $2n$-contour for the map $(y,\eta )\mapsto \Phi_0(y) - \Im ((z-y)\eta +\widetilde W(s,\eta ))$, that is, any contour of real dimension $2n$, containing $U_s(z,-\Im z)$, and along which,
$$
\Psi_{s,z}(y,\eta )-\Phi_0(z)\leq -\delta |(y,\eta )-U_s(z,-\Im z)|^2,
$$
for some $\delta >0$.  However, $\delta$ may depend on $s$, and this is not enough to properly define the operator $G_0(s)$ when $s$ becomes large. 

\bigskip
Setting $A_s(z):= {\rm Hess}_\xi W(s, -\Im z)$, for $s\geq 0$ we instead consider the contour $\gamma (s,z)$ defined by,
\be
\label{contourG0}
\left\{
\begin{array}{l}
 \Re \eta = -\Im y ;\\
 \Im \eta = \la s\ra^{-2}\big[ \Re (z-y)+\partial_\zeta \widetilde W (s, -\Im z)+A_s(z)\Im (z-y)\big];\\
\la s\ra^{-1}| \Re (z-y)+\partial_\zeta \widetilde W (s, -\Im z)| + |\Im (z-y)|\leq r,
\end{array}
\right.
\ee
where $r>0$ is an arbitrarily small constant. Then, we claim,
\begin{lem}\sl 
\label{leboncontour}
If $r$ is chosen sufficiently small, then there exists a constant $\delta >0$ independent of $s$, such that, 
along $\gamma (s,z)$, one has,
$$
\Psi_{s,z}(y,\eta ) -\Phi_0(z) \leq -\delta \big[\la s\ra^{-2}| \Re (z-y)+\partial_\xi \widetilde W (s, -\Im z)|^2 + |\Im (z-y)|^2\big].
$$
\end{lem}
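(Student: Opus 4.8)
The plan is to expand $\Psi_{s,z}(y,\eta)-\Phi_0(z)$ by Taylor's formula around the saddle point $U_s(z,-\Im z)$ identified in the previous lemma, and to check that the chosen contour $\gamma(s,z)$ makes the quadratic form negative-definite with a constant independent of $s$. First I would introduce the shifted variables $w:=z-y+\partial_\zeta\widetilde W(s,-\Im z)$ (or equivalently $y-U_s^{-1}$-type coordinates) and $\mu:=\eta+\Im z$, so that the saddle point sits at the origin and the contour $\gamma(s,z)$ becomes $\Re\mu=-\Im y$ corrected, $\Im\mu=\la s\ra^{-2}(\Re w+A_s(z)\Im w)$, with the constraint $\la s\ra^{-1}|\Re w|+|\Im w|\le r$. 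Then I would write $\Psi_{s,z}(y,\eta)-\Phi_0(z)$ exactly: the $\Phi_0(y)$ part contributes $\frac12|\Im y|^2-\frac12|\Im z|^2$, the term $-\Im((z-y)\eta)$ is quadratic plus linear in the shifted variables, and $-\Im\widetilde W(s,\eta)$ is handled by a second-order Taylor expansion at $\eta=-\Im z$, using that $\widetilde W$ is real on the real axis so that $\Im\widetilde W(s,-\Im z)=0$ and $\Im\nabla_\eta\widetilde W(s,-\Im z)=0$, while $\Im\nabla_\eta^2\widetilde W(s,-\Im z)=0$ too — hence the only second-order contribution from $\widetilde W$ along directions with $\Im\mu\ne0$ is $-\Re(A_s(z))\cdot(\Im\mu)\cdot(\cdots)$ cross terms plus a genuinely small remainder.

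The core computation is then to plug the contour equations into this expansion. On $\gamma(s,z)$ we have $\Re\mu$ cancelling the $\Im y$-terms by design (the $\Re\eta=-\Im y$ prescription kills the "$\Re$"-direction contribution exactly, as in the flat case), and the surviving quadratic form in $(\Re w,\Im w)$ should come out, to leading order, as $-\la s\ra^{-2}|\Re w|^2-|\Im w|^2$ plus error terms. The error terms are of two kinds: (i) the cubic-and-higher remainder in the Taylor expansion of $\widetilde W$, which by Lemma \ref{estflow} / the flow estimates is controlled because $\partial_\eta^k W(s,\cdot)$ grows at most like $\la s\ra$ (indeed $\partial_\eta W=\hat x(s,\cdot)=\mathcal O(s)$ and its further $\eta$-derivatives are bounded times powers of $\la s\ra$), so on the constraint set $\la s\ra^{-1}|\Re w|+|\Im w|\le r$ these higher terms are $\mathcal O(r)$ times the quadratic form; and (ii) the contribution of $A_s(z)\Im w$ inside $\Im\mu$, which produces terms $\la s\ra^{-2}(A_s\Im w)\cdot\Re w$ and $\la s\ra^{-2}|A_s\Im w|^2$ — the first is absorbed into $\frac12\la s\ra^{-2}|\Re w|^2+C\la s\ra^{-2}|\Im w|^2$ by Cauchy–Schwarz, and the second, being $\mathcal O(\la s\ra^{-2}|\Im w|^2)$, is dominated by $\frac12|\Im w|^2$. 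Choosing $r$ small then yields the claimed inequality with a fixed $\delta$.

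The main obstacle I expect is bookkeeping the $\la s\ra$-weights carefully enough to see that nothing degenerates as $s\to\infty$: the Hessian $A_s(z)$ of $W$ in $\eta$ is itself unbounded (of size $\mathcal O(\la s\ra)$, since $W\sim R_\delta|\xi|+\int_0^s q\,ds'$ and $q$ is quadratic in $\xi$ with $\hat x$ of size $s$), so the term $A_s(z)\Im(z-y)$ appearing in the definition of $\Im\eta$ in \eqref{contourG0} is genuinely of order $\la s\ra\cdot|\Im w|$, and one must verify that after multiplication by the prefactor $\la s\ra^{-2}$ and pairing against $\Re w$ (of allowed size $\le r\la s\ra$) it still only contributes at the level $\la s\ra^{-2}|\Re w|^2+|\Im w|^2$, not at a larger scale. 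Equivalently, the role of the $A_s(z)$-correction in the contour is precisely to cancel the cross term $-\Re(A_s(z)\Im\mu)\cdot(\text{something})$ coming from the Taylor expansion of $-\Im\widetilde W$, leaving a clean diagonal quadratic form; making that cancellation explicit, and bounding the leftover via the support constraint, is the crux. The remaining steps — verifying $U_s(z,-\Im z)\in\gamma(s,z)$, checking that $\gamma(s,z)$ is a genuine $2n$-dimensional contour, and that the remainder estimates are uniform in $s\in[0,T/h]$ and in $h$ small — are routine given the flow bounds of Lemma \ref{estflow}.
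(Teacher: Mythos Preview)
Your approach is the paper's: center at the saddle point via $a=-w=y-z-\partial_\eta\widetilde W(s,-\Im z)$ and $\alpha=\mu=\eta+\Im z$, Taylor-expand $\Im\widetilde W$ using realness of $W$, plug in the contour, and absorb the cubic remainder by the support constraint. One point in your item~(ii) needs correction, though. You write that $\la s\ra^{-2}|A_s\Im w|^2=\mathcal O(\la s\ra^{-2}|\Im w|^2)$, but since $A_s=\mathcal O(\la s\ra)$ (as you yourself note in the next paragraph), this term is only $\mathcal O(|\Im w|^2)$ with a possibly large constant, and the companion cross term $2\la s\ra^{-2}(A_s\Im w)\cdot\Re w$ is of size $\la s\ra^{-1}|\Re w|\,|\Im w|$, which Cauchy--Schwarz alone does not absorb into $\tfrac12\la s\ra^{-2}|\Re w|^2+\tfrac12|\Im w|^2$ uniformly in $s$.

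The fix is exactly the ``cancellation'' you allude to afterwards: do not split off $A_s\Im w$ as an error. On the contour one has $\Re\alpha=-\Im a$ and $\Im\alpha=-\la s\ra^{-2}(\Re a+A_s\Im a)$, and the second-order term $-A_s\Re\alpha\cdot\Im\alpha$ from the Taylor expansion of $\Im\widetilde W$ combines \emph{exactly} with $\Re a\cdot\Im\alpha$ to give the completed square
\[
\Psi_{s,z}-\Phi_0(z)=-\tfrac12|\Im a|^2-\la s\ra^{-2}\bigl|\Re a+A_s\Im a\bigr|^2+\mathcal O\bigl(\la s\ra|\Im\alpha|\,|\alpha|^2\bigr),
\]
where the refined remainder (with the explicit $|\Im\alpha|$ factor, again from realness of $W$) is crucial: it is what makes the error $\le Cr$ times the quadratic form uniformly in $s$, whereas the naive $\mathcal O(\la s\ra|\alpha|^3)$ would blow up. Only at the very end does one pass from the completed square to the diagonal form $\la s\ra^{-2}|\Re a|^2+|\Im a|^2$, via the elementary equivalence $|\Im a|^2+\la s\ra^{-2}|\Re a+A_s\Im a|^2\ge\delta(|\Im a|^2+\la s\ra^{-2}|\Re a|^2)$.
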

\begin{rem}
In particular, on the boundary of $\gamma (s,z)$ we obtain $\Psi_{s,z}(y,\eta ) -\Phi_0(z) \leq -r_1$ with $r_1>0$ independent of $s$. In that case, we will say that $\gamma (s,z)$ is a good contour for the phase $\Psi_{s,z}$ uniformly with respect to $s$.
\end{rem}
\begin{proof} 
At first, let us observe that, by Lemma \ref{estflow} and (\ref{lienWflot}), for any $\alpha\in \na^n$, we have,
\be
\label{estHessW}
\partial_\xi^\alpha W(s,\xi ) ={\cal O}(\la s\ra )
\ee
uniformly for $s={\cal O}(1/h)$.

\bigskip
We set,
\begin{eqnarray*}
&& a:= y-z-\nabla_\eta \widetilde W (s,-\Im z);\\
&& \alpha := \eta +\Im z.
\end{eqnarray*}
Since $W(s,\eta )$ is real on the real, performing a Taylor expansion and using  (\ref{estHessW}), we see that,
$$
\Im (\widetilde W(s,\eta ) - \alpha\cdot \nabla_\eta \widetilde W(s,-\Im z)) = A_s(z)\Re\alpha\cdot \Im\alpha
+{\cal O}(\la s\ra |\Im\alpha|\, |\alpha|^2).
$$
Then, by a straightforward computation, we find,
\begin{eqnarray*}
\Psi_{s,z}(y,\eta ) = \frac12 |\Im a|^2  +\Im a\cdot \Re\alpha +\Re a\cdot\Im \alpha -  A_s(z)\Re\alpha\cdot \Im\alpha\\
+{\cal O}(\la s\ra |\Im\alpha|\, |\alpha|^2).
\end{eqnarray*}
Now, by definition, along $\gamma (s,z)$, we have,
$$
\left\{
\begin{array}{l}
 \Re \alpha = -\Im a ;\\
 \Im \alpha =- \la s\ra^{-2}\big( \Re a+A_s(z)\Im a\big);\\
\la s\ra^{-1}| \Re a| + |\Im a|\leq r,
\end{array}
\right.
$$
and thus, there we obtain,
\begin{eqnarray*}
\Psi_{s,z}(y,\eta ) = -\frac12 |\Im a|^2   -\la s\ra^{-2}\big| \Re a+A_s(z)\Im a\big|^2\\
+{\cal O}(\la s\ra^{-1} \big|\Re a+A_s(z)\Im a\big| \, |\Im a|^2)\\
+{\cal O} (\la s\ra^{-5} \big|\Re a+A_s(z)\Im a\big|^3).
\end{eqnarray*}
Using again (\ref{estHessW}), this in particular gives,
\begin{eqnarray*}
&&\Psi_{s,z}(y,\eta ) \leq -\frac12 |\Im a|^2   -\la s\ra^{-2}\big| \Re a+A_s(z)\Im a\big|^2\\
&&\hskip 1cm +C\big(\la s\ra^{-1} |\Re a| +|\Im a|\big) \big( |\Im a|^2 
+\la s\ra^{-2} \big|\Re a+A_s(z)\Im a\big|^2\big),
\end{eqnarray*}
where $C>0$ is a uniform constant. Hence,
$$
\Psi_{s,z}(y,\eta ) \leq -(\frac12-Cr) |\Im a|^2   -(1-Cr)\la s\ra^{-2}\big| \Re a+A_s(z)\Im a\big|^2,
$$
and the result follows by taking $r < 1/(2C)$ and by observing that 
$$
|\Im a|^2   +\la s\ra^{-2}\big| \Re a+A_s(z)\Im a\big|^2  \geq \delta ( |\Im a|^2+\la s\ra^{-2}|\Re a|^2)
$$
if $\delta >0$ is small enough, independently of $s$.
\end{proof}
Taking the contour $\gamma (s,z)$ as in (\ref{contourG0}), and defining $G_0(s)$ by (\ref{defG0}) and $Z_s(z_0)$ by (\ref{defZs}), we have,
\begin{prop} \sl
\label{QEvol}
For $\varepsilon , s>0$ and $Z\in\co^n$, we set,
$$
\Omega_s(Z, \varepsilon ):=\{ z\in\co^n\,;\, \la s\ra^{-1}|\Re (z-Z)| + |\Im (z-Z)| <\varepsilon\}.
$$
Then, for any $z_0\in\co^n\cap\{|\Im z| >\delta_0\}$ and $\varepsilon_1>r$, there exists $\varepsilon_2>0$ such that, for any $s\geq 0$, the operator $G_0(s)$ maps $H_{\Phi_0}(\Omega_s(Z_s(z_0),\varepsilon_1))$ into $H_{\Phi_0}(\Omega_s(z_0,\varepsilon_2))$. Moreover, $G_0(s)$ verifies $G_0(0)\sim I$ and,
$$
ih\frac{\partial G_0}{\partial s} +  (\partial_s W)(s,hD_z) G_0(s) \sim 0,
$$
uniformly for $s\in [0, T/h]$ ($T>0$ arbitrary), in the sense that (possibly by shrinking $\varepsilon_2$) there exists a constant $C >0$ such that, for all $v\in H_{\Phi_0}(\Omega_s(Z_s(z_0),\varepsilon_1))$ and all $s\in [0, T/h]$, one has,
\begin{eqnarray*}
&& \big\Vert ih\frac{\partial G_0}{\partial s}v +  (\partial_s W)(s,hD_z) G_0(s)v \big\Vert_{L^2_{\Phi_0} (\Omega_s(z_0,\varepsilon_2))}\\
&& \hskip 6cm \leq Ce^{-1/Ch}\Vert v\Vert_{L^2_{\Phi_0} (\Omega_s(Z_s(z_0),\varepsilon_1))}.
\end{eqnarray*}
Here, we have used the notation $L^2_{\Phi_0} (\Omega ) =L^2(\Omega ; e^{-2\Phi_0(z)/h}d\Re zd\Im z)$.
\end{prop}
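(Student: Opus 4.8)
The plan is to treat $G_0(s)$ as an FIO with phase $\phi(z,y,\eta) = (z-y)\eta + \widetilde W(s,\eta)$ and amplitude $1$, and to verify the three required properties directly from the contour $\gamma(s,z)$ of \eqref{contourG0} and the uniform-good-contour estimate of Lemma \ref{leboncontour}.

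First I would establish the mapping property. Given $v \in H_{\Phi_0}(\Omega_s(Z_s(z_0),\varepsilon_1))$, the integrand $e^{i\phi/h}v(y)$ is holomorphic in $(y,\eta)$ in a neighborhood (of the appropriate anisotropic $\la s\ra$-scaled size, matching the third line of \eqref{contourG0}) of the saddle point $U_s(z,-\Im z)$; here the condition $\varepsilon_1 > r$ is exactly what guarantees that the contour $\gamma(s,z)$ and a full anisotropic neighborhood of it stay inside the domain where $v$ is defined, for $z$ ranging over $\Omega_s(z_0,\varepsilon_2)$ with $\varepsilon_2$ small. Lemma \ref{leboncontour} then gives $\Psi_{s,z}(y,\eta) - \Phi_0(z) \le 0$ along $\gamma(s,z)$ with a uniform (in $s$) bound near the boundary, so that $|e^{i\phi/h}v(y)|e^{-\Phi_0(z)/h} \lesssim e^{(\Psi_{s,z}-\Phi_0(z))/h}\|v\| \lesssim \|v\|$; integrating over $\gamma(s,z)$, whose measure is ${\cal O}(\la s\ra^{n})$ from the anisotropic scaling, and absorbing the polynomial factor into $(2\pi h)^{-n}$ with room to spare from the Gaussian decay, yields $G_0(s)v \in H_{\Phi_0}(\Omega_s(z_0,\varepsilon_2))$ with the norm estimate $\|G_0(s)v\| \le C\|v\|$ uniformly in $s\in[0,T/h]$.

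Next, $G_0(0) \sim I$: at $s=0$ one has $\widetilde W(0,\eta)=0$, so $G_0(0)$ is the FIO with phase $(z-y)\eta$, which is (a realization of) the identity on $H_{\Phi_0,z_0}$ up to an ${\cal O}(e^{-1/Ch})$ error by the standard Sj\"ostrand computation, since $\gamma(0,z)$ is a good contour for $\Phi_0(y)-\Im((z-y)\eta)$. For the evolution equation, I would differentiate \eqref{defG0} under the integral sign: $ih\,\partial_s G_0(s)v = (2\pi h)^{-n}\int_{\gamma(s,z)} (-\partial_s\widetilde W(s,\eta)) e^{i\phi/h}v\,dyd\eta$ plus the contribution of the $s$-dependence of the contour $\gamma(s,z)$ itself, which — since the endpoints remain on a uniformly good contour — contributes only ${\cal O}(e^{-1/Ch})\|v\|$ (this is the contour-deformation point; the $\la s\ra$-growth of the contour is controlled precisely because Lemma \ref{leboncontour} is uniform in $s$, see the discussion after that lemma and Appendix \ref{appA}). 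On the other hand, $(\partial_s W)(s,hD_z)G_0(s)v$ is computed by composing the pseudodifferential quantization of $\partial_s W(s,\zeta)=q(\partial_\xi W,\xi;h)$ with the FIO $G_0(s)$; a stationary-phase / contour-collapse argument in the intermediate variables shows this composition equals, up to ${\cal O}(e^{-1/Ch})\|v\|$, the FIO with phase $\phi$ and amplitude $\partial_s W(s,\eta)$ evaluated on the same contour $\gamma(s,z)$ — which is exactly $-ih\,\partial_s G_0(s)v$ up to the negligible contour term. Subtracting gives $ih\,\partial_s G_0 + (\partial_s W)(s,hD_z)G_0 \sim 0$ with the claimed ${\cal O}(e^{-1/Ch})$ operator bound, after possibly shrinking $\varepsilon_2$ so that all intermediate contours fit.

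The main obstacle is the last step: justifying the composition $(\partial_s W)(s,hD_z) \circ G_0(s)$ and identifying the cancellation, because both factors live on $\la s\ra$-dependent contours whose diameters grow, so one cannot invoke the $s$-uniform calculus of \cite{Sj} as a black box. The remedy is to perform the composition by hand — deform the combined $4n$-dimensional contour to the product of good contours, apply Lemma \ref{leboncontour} and the estimate \eqref{estHessW} ($\partial_\xi^\alpha W = {\cal O}(\la s\ra)$) to check that the Hessian of the total phase is nondegenerate with $s$-uniform lower bound after the anisotropic rescaling $\Re(z-y)\mapsto \la s\ra^{-1}\Re(z-y)$, and invoke the contour-change justification of Appendix \ref{appA} for each deformation. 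The fact that $\partial_s W = q(\partial_\xi W,\xi;h)$ solves the eikonal equation \eqref{eikonal2}, together with $\partial_\xi W(s,\xi)=\hat x(s,\xi)$ from \eqref{lienWflot}, is what makes the symbol of the composition match $-\partial_s\widetilde W$ along the critical set, so no lower-order corrections survive.
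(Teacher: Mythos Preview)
Your approach is essentially that of the paper: use Lemma \ref{leboncontour} for the mapping property, note $G_0(0)\sim I$ is standard, differentiate under the integral (the $s$-variation of the contour giving only a boundary term controlled by the uniformly negative phase there), and compute the composition $(\partial_s W)(s,hD_z)\circ G_0(s)$ by deforming the $4n$-contour and applying analytic stationary phase, invoking Appendix \ref{appA} for the non-local contour change.

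Two small points. First, for the mapping property you should say explicitly why $G_0(s)v$ lands in $H_{\Phi_0}$ and not merely in the weighted $L^2$: since $\gamma(s,z)$ depends on $z$ through $\Im z$, the raw integral is not holomorphic in $z$; the paper handles this by replacing $\gamma(s,z)$ by the fixed contour $\gamma(s,z_0)$, the difference being negligible because the phase is $\le \Phi_0(z)-\delta$ on $\partial\gamma$ uniformly. Second, your last sentence misattributes the cancellation. The eikonal equation \eqref{eikonal2} plays no role in Proposition \ref{QEvol}: the reason the composition has amplitude exactly $\partial_s W(s,\eta)$ with no lower-order terms is simply that the symbol $\partial_s W(s,\zeta)$ is independent of $z$, so in the stationary-phase expansion $\sum_k \frac{h^k}{i^k k!}(\nabla_{z'}\!\cdot\!\nabla_\zeta)^k$ every term with $k\ge 1$ vanishes. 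The eikonal equation enters only later, in Proposition \ref{propQconj} and Section \ref{sec-proofmainth}, where it produces the decay of the conjugated symbol.
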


\begin{proof} Let us recall from \cite{Sj, MNS2} that, if $\Omega\subset \co^n$ is open, what we call $H_{\Phi_0}(\Omega)$ is the set of $h$-depending smooth functions on $\Omega$, that can be written on the form $F+f$, where $F$ is holomorpic on $\Omega$ and verifies $|F(z;h)|={\cal O}(e^{(\Phi_0(z)+\varepsilon)/h})$ for any $\varepsilon >0$, while $f$ is such that, for any $\alpha\in\na^{2n}$, there exists $\varepsilon_\alpha >0$ such that $|\partial_{(\Re z,\Im z)}^\alpha f(z;h)|={\cal O}(e^{(\Phi_0(z)-\varepsilon_\alpha )/h})$ uniformly for $z\in\Omega$ and $h>0$ small enough. Moreover, such two  functions are identified if their difference satisfies to similar estimates as the previous $f$, and then we also say that they are equivalent in $H_{\Phi_0}(\Omega)$, or that their difference is neglectible in $H_{\Phi_0}(\Omega)$.

\bigskip
In our case, we have $\Psi_{s,z}(y,\eta )-\Phi_0(z)\leq -\delta r^2/2$ on the boundary of $\gamma (s,z)$. Thus, by Cauchy theorem, for $z\in \Omega_s(z_0,\varepsilon_2)$  and $v\in H_{\Phi_0}(\Omega_s(Z_s(z_0),\varepsilon_1))$,  if we substitute $\gamma_0(s):=\gamma (s, z_0)$ to $\gamma (s,z)$ in the expression of $G_0(s)v$, we obtain a function that differs from $G_0(s)v$ by a neglectible function in $H_{\Phi_0}(\Omega_s(z_0),\varepsilon_2))$, where $\varepsilon_2>0$ must be sufficiently small in order to have,
$$
\sup_{s\geq 0}\,\,\sup_{z\in \Omega_s(z_+(x_0,\xi_0),\varepsilon_2)}\,\,\sup_{ (y,\eta)\in \partial\gamma_0(s)}(\Psi_{s,z}(y,\eta )-\Phi_0(z))<0.
$$
This is indeed possible, because on $\partial\gamma_0(s)$, one has,
$$
\la s\ra^{-1}|\Re (y-Z_s)| +|\Im (y-Z_s)| =r,
$$
and,
\begin{eqnarray*}
\Re (Z_s(z_0)-z-\partial_\xi \widetilde W(s,-\Im z) )&=& {\cal O}(|\Re (z_0-z)| )\\
&&  +{\cal O}( \la s\ra |\Im (z_0-z)|) ,
\end{eqnarray*}
while,
$$
\Im (Z_s(z_0)-z-\partial_\xi \widetilde W(s,-\Im z) )={\cal O}(|\Im (z_0-z)|).
$$
On the other hand, the function obtained by substituting $\gamma_0(s)$ to $\gamma (s,z)$ is clearly holomorphic in $\Omega_s(z_0,\varepsilon_2)$, and, by Lemma \ref{leboncontour}, it also verifies the estimates that makes it an element of $H_{\Phi_0}(\Omega_s(z_0,\varepsilon_2))$.

\bigskip
By construction, we also have $G_0(0)\sim I$, in the sense that there exists $C>0$ such that, for all $v\in  H_{\Phi_0}(|z-z_0|<\varepsilon_1)$, and possibly by shrinking $\varepsilon_2$, one has,
$$
\Vert G_0(0)v -v\Vert_{L^2_{\Phi_0}(|z-z_0|<\varepsilon_2)}\leq Ce^{-1/Ch}\Vert v\Vert_{L^2_{\Phi_0}(|z-z_0|<\varepsilon_1)}.
$$

Moreover, by definition we have,
\begin{eqnarray}
&&(\partial_sW)(s, hD_z)G_0(s) v(z;h)\nonumber\\
\label{dW0G0}
&&  = \frac1{(2\pi h)^{2n}}\int_{\Gamma_0(s,z)}e^{i(z-z')\zeta/h +i(z'-y)\eta /h +i\widetilde W(s,\eta)/h}\partial_\zeta W(s,\zeta )\\
&& \hskip 8cm\times v(y) dz'd\zeta dyd\eta,\nonumber
\end{eqnarray}
where $\Gamma_0(s,z)$ is the $4n$-contour defined by,
\be
\label{chcont1}
\Gamma_0(s,z)\, :\, \zeta = -\Im z +iR(\overline{z-z'})\quad ;\quad |z-z'|\leq r\quad ;\quad (y,\eta)\in\gamma(s,z'),
\ee
with an arbitrarily small constant $r>0$. It is easy to check that $\Gamma_0 (s,z)$ is a good contour  for the phase $(z',y,\zeta,\eta) \mapsto \Phi_0(y)-\Im ((z-z')\zeta +(z'-y)\eta  +\widetilde W(s,\eta))$ uniformly with respect to $s$ (the critical point is given by $z'=z$, $\zeta =\eta =-\Im z$, and $y=z +\nabla_\y \widetilde W(s,-\Im z) $). 

\bigskip
Another good contour is $\Gamma_1(s,z)$, given by,
\be
\label{chcont2}
\Gamma_1 (s,z)\, :\, (y,\eta)\in\gamma(s,z)\quad ;\quad z'=z-iR^{-1}(\overline{\zeta -\eta})\quad ;\quad |\zeta -\eta |< r',
\ee
 (with $r'>0$ small enough). 
 
 \bigskip
 Since the domain of integration is not necessarily a small neighborhood of the critical point, the fact that one can substitute $\Gamma_2(s,z)$ to $\Gamma_1(s,z)$ is not  automatic as in general Sj\"ostrand's theory, but this fact is proved in Appendix (see Lemma \ref{defcont1}).
We obtain,
\begin{eqnarray*}
&&(\partial_sW)(s, hD_z)G_0(s) v(z;h)\\
&& \hskip 2cm  = \frac1{(2\pi h)^{n}}\int_{\gamma(s,z)}e^{i(z-y)\eta /h +i\widetilde W(s,\eta)/h}W_1(s,z,\eta )v(y) dyd\eta,
\end{eqnarray*}
where $W_1$ is given by,
\begin{eqnarray*}
W_1(s,\eta )
=\frac1{(2\pi h)^n}\int_{{z'=iR^{-1}(\overline{\eta -\zeta})}\atop{|\eta -\zeta |<r'}}e^{iz'(\eta -\zeta) /h}(\partial_sW)(s, \zeta)dz'd\zeta.
\end{eqnarray*}
Then, standard arguments (such as the analytic stationary phase theorem, see \cite{Sj}) show that $W_1(s,\eta )\sim \partial_sW (s,\eta )$ in the space of analytic symbols  near $ \xi^+(x_0,\xi_0)$. 

Let us also observe that, by construction,   $(\partial_sW)(s, \zeta)$ is uniformly bounded together with all its derivatives for $s\in [0,T/h]$ and $|\zeta|\geq r_0>0$. As a consequence, the previous equivalence of symbols is indeed uniform with respect to $\in [0,T/h]$.

Therefore, setting,
$$
G_0'(s)v(z):=\frac1{(2\pi h)^{n}}\int_{\gamma(s,z)}e^{i(z-y)\eta /h +i\widetilde W(s,\eta)/h}(\partial_sW)(s, \eta)v(y) dyd\eta,
$$
we deduce the existence, for any given $\varepsilon_1>r$,  of $\varepsilon_2>0$ and $C>0$, such that,
\begin{eqnarray*}
&& \big\Vert G_0'(s)v - (\partial_s W)(s,hD_z) G_0(s)v \big\Vert_{L^2_{\Phi_0} (\Omega_s(z_+(x_0,\xi_0),\varepsilon_2))}\\
&& \hskip 6cm \leq Ce^{-1/Ch}\Vert v\Vert_{L^2_{\Phi_0} (\Omega_s(Z_s,\varepsilon_1))}.
\end{eqnarray*}
On the other hand, by differentiating the expression of $G_0(s)$ and using that $\gamma (s,z)$ is a uniformly good contour for the phase $\Psi_{s,z}$, we immediately obtain,
$$
ih\frac{\partial G_0}{\partial s} \sim -G_0'(s),
$$
and Proposition \ref{QEvol} is proved.
\end{proof}

In the same way, one can construct an operator,
$$
G_1(s)\, :\, H_{\Phi_0}(\Omega_s(z_0,\varepsilon_1))\rightarrow H_{\Phi_0}(\Omega_s(Z_s(z_0),\varepsilon_2)),
$$
(where, as before, $\varepsilon_2>0$ depends on $\varepsilon_1>r$, of the form,
\be
\label{defG1}
G_1(s)v(z) =\frac1{(2\pi h)^n}\int_{\gamma' (s,z)} e^{i(z-y)\eta /h -i\widetilde W(s,\eta)/h}v(y)dyd\eta
\ee
with $\gamma' (s,z)$ given by,
\be
\label{contourG1}
\left\{
\begin{array}{l}
 \Re \eta = -\Im y ;\\
 \Im \eta = \la s\ra^{-2}\big[ \Re (z-y)+x_1-\partial_\xi W (s, -\Im z)-A_s(z)\Im (z-y)\big)];\\
\la s\ra^{-1}| \Re (z-y)-x_1+\partial_\xi W (s, -\Im z)| + |\Im (z-y)|\leq r,
\end{array}
\right.
\ee
such that $G_1(0)\sim I$ and,
$$
ih\frac{\partial G_1}{\partial s} -  G_1(s)\, (\partial_s W)(s,hD_z)  \sim 0,
$$
in a sense analog to that of Proposition \ref{QEvol}. Then, we also have,
\begin{eqnarray*}
&& G_1(s)G_0(s)\sim I \mbox{ on } H_{\Phi_0}(\Omega_s(Z_s(z_0), \varepsilon));\\
&& G_0(s)G_1(s)\sim I \mbox{ on } H_{\Phi_0}(\Omega_s(z_0,\varepsilon)),
\end{eqnarray*}
(where, as before, one actually has to schrink $\varepsilon$ in the estimates) and, in the sequel, we rather write $G_0(s)^{-1}$ for $G_1(s)$.

\section{The conjugated evolution equation}\label{sec-conj}
In order to exploit the results of the previous section, we first show,
\begin{prop}\sl
\label{estevol}
 Let $T>0$, $\tilde H$ be the operator defined in (\ref{defHtilde1})-(\ref{quantizPDO}), and  $\tilde Q =h^2\tilde H$. Let also $Z=Z(s)\in \co^n$ be such that $\Im Z(s)\not=0$ and $\la s\ra^{-1}|\Re Z(s)| +|\Im Z(s)|+|\Im Z(s)|^{-1}={\cal O}(1)$ uniformly. Then, for any  $\varepsilon>0$ small enough, and by choosing $R$ sufficiently small in the definition of $\tilde H$ (that is, in (\ref{contourPDO})),  there exists $C>0$, such that, for any $s\in [0,T/h]$, one has,
$$
\big\Vert ih\frac{\partial \tilde v}{\partial s} - \widetilde Q \tilde v(s)\big\Vert_{L^2_{\Phi_0}(\Omega_s(Z(s),\varepsilon))}\leq Ce^{-1/Ch}.
$$
\end{prop}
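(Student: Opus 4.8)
The plan is to revisit the proof of the basic relation (\ref{eqevol2}) --- which is granted by Sj\"ostrand's theory and by \cite{MNS2}, Section~4 --- and to keep track of the dependence on $z$, so as to turn it into a quantitative estimate that is uniform on the $s$-dependent region $\Omega_s(Z(s),\varepsilon)$. Since $\T$ is the integral transform with Gaussian kernel $k_z(y):=e^{-(z-y)^2/2h}$, which for each fixed $z$ is a Schwartz function of $y$, differentiating $\tilde v(s)=\T(e^{-ihsH}u_0)$ under the integral sign and using the self-adjointness of $H$ together with $\overline{H\bar f}={}^tH f$ for the real differential operator $H$ (here ${}^tH$ is the formal transpose of $H$) yields the exact identity
$$
ih\frac{\partial\tilde v}{\partial s}(z)=h^2\int_{\re^n}\bigpare{{}^tH_y\,k_z}(y)\,w(y)\,dy,\qquad w:=e^{-ihsH}u_0 .
$$
Since $\widetilde Q\tilde v(s)=h^2\tilde H(\T w)(z)$, the Proposition is reduced to showing that the $L^2_{\Phi_0}(\Omega_s(Z(s),\varepsilon))$-norm of $h^2\bigl(\int_{\re^n}({}^tH_y\,k_z)(y)\,w(y)\,dy-\tilde H(\T w)(z)\bigr)$ is ${\cal O}(e^{-1/Ch})$, uniformly for $s\in[0,T/h]$.

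Next I would write both quantities as oscillatory--Gaussian integrals and compare them. On one hand, ${}^tH_y k_z(y)=b(y,z;h)\,k_z(y)$ where $b$ is a polynomial of degree $\le 2$ in $(z-y)/h$ with coefficients built from $a_\a$ and their first derivatives (holomorphic on $\C_\n$), so the first quantity is a single Gaussian integral over $\re^n$. On the other hand, inserting $\T w(y)=\int e^{-(y-y')^2/2h}w(y')\,dy'$ into (\ref{defHtilde1})--(\ref{quantizPDO}) turns $\tilde H(\T w)(z)$ into a multiple integral over $(y,\zeta,y')$ with $y'\in\re^n$ and $(y,\zeta)\in\gamma_R(z)$. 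The phase $i(z-y)\zeta/h-(y-y')^2/2h$ has the non-degenerate critical point $y=z$, $\zeta=i(z-y')$ with critical value $-(z-y')^2/2h$, at which the amplitude reproduces exactly the symbol $b$ above --- this is precisely how $\tilde a_{j,k},\tilde a_j,\tilde a_0$ and the $\widetilde q_j$ in (\ref{defqj}) are defined. Carrying out the $(y,\zeta)$-integration (exactly for the polynomial-in-$\zeta$ part of the symbol, and by the analytic stationary phase theorem for the remainder) collapses $\tilde H(\T w)(z)$ onto the same $\re^n$-integral as the first quantity, up to the contribution of the truncation $|z-y|<R^{-1/2}$ in $\gamma_R(z)$. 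The contour deformations used in this step increase in size like $\la s\ra$, and are of the type justified in Appendix~\ref{appA}.

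Once $|k_z(y)|$ is measured against the weight $e^{-\Phi_0(z)/h}$, the integrand on the deleted part of the contour carries a factor of size $e^{-cR|z-y|^2/h}$, hence of size $e^{-c/h}$ on $\{|z-y|=R^{-1/2}\}$: this is the source of the exponential gain, and it does not depend on $R$. Choosing $R$ small has a different purpose. On $\Omega_s(Z(s),\varepsilon)$ the hypothesis on $Z(s)$ gives $|\Re z|={\cal O}(\la s\ra)={\cal O}(1/h)$ and, for $\varepsilon$ small, $|\Im z|+|\Im z|^{-1}={\cal O}(1)$; and along $\gamma_R(z)$ one has $z+i\zeta=\Re z-R\overline{z-y}$, whose imaginary part is ${\cal O}(R^{1/2})$ while its real part always satisfies $\la\,\cdot\,\ra\ge 1$. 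Thus $z+i\zeta\in\C_\n$ as soon as $R<\n^2$, so that $\tilde a_{j,k}(z,\zeta)=a_{j,k}(z+i\zeta)$, etc. are holomorphic along the relevant contours and all the deformations above are legitimate. Moreover $a_0(z+i\zeta)={\cal O}(\la s\ra^{2-\s})$ and $a_j(z+i\zeta)={\cal O}(\la s\ra^{1-\s})$ there, so $\widetilde Q=h^2\tilde H$ has a symbol that is ${\cal O}(1)$ on $\Omega_s(Z(s),\varepsilon)$ (indeed the perturbation part is ${\cal O}(h^\s)$), times at most polynomially-in-$1/h$ growing contributions from the $(z-y)/h$-monomials; multiplied by the $e^{-c/h}$ from the truncation, the whole error is still ${\cal O}(e^{-1/Ch})$.

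\emph{The main obstacle} is exactly this uniformity in $s$ up to $T/h$ over the expanding region $\Omega_s(Z(s),\varepsilon)$, whose real diameter grows like $\la s\ra$: one must check that none of the polynomial-in-$\la s\ra$ losses --- from the growth of the coefficients, from the $1/h$-powers generated by the second-order part of $H$, and from the volume of the region --- ever overtakes the exponential smallness $e^{-c/h}$, and that each change of good contour used along the way remains admissible when its size grows with $s$. This is precisely the recurring difficulty emphasized in the introduction, and for the changes of contour needed here it is settled in Appendix~\ref{appA}.
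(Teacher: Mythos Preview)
Your overall strategy---compare $\T(Hw)$ with $\tilde H(\T w)$ through a Gaussian integral and stationary phase in $(y,\zeta)$---is in the right spirit, but it diverges from the paper's argument in one essential respect and leaves a genuine gap there.

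The paper does not attack the full differential operator at once. It uses the exact identity $\T\circ D_x=D_z\circ\T$ together with Lemma~\ref{derivHphi} (Appendix~\ref{appB}) to peel off the derivatives, reducing everything to the single estimate
\[
\bigl\Vert \T(au)-{\rm Op}_R(\tilde a)\T u\bigr\Vert_{L^2_{\Phi_0}(\Omega_s(Z(s),\varepsilon))}\leq Ce^{-1/Ch}\Vert u\Vert_{L^2}
\]
for each coefficient $a=a(x)={\cal O}(\la x\ra^k)$. This eliminates the $(z-y)/h$-monomials you worry about. More importantly, your mechanism for uniformity in $s$ is not the one used here, and the reference to Appendix~\ref{appA} is misplaced: in this proof the contour $\gamma_R(z)$ keeps its \emph{fixed} size $R^{-1/2}$ and never grows like $\la s\ra$. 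What grows is the region $\Omega_s(Z(s),\varepsilon)$, and the paper handles this by covering it with $N_s={\cal O}(\la s\ra^n)$ balls $\Omega_j\subset\{|z-z_j(s)|<\varepsilon\}$ of fixed radius, and then localizing the $x$-integration near $\Re z_j(s)$ by a cutoff $\chi_\varepsilon(x-\Re z_j(s))$. The far part of $\T u$ is directly ${\cal O}(e^{-\varepsilon^2/2h})$; for the near part the critical point $\zeta=i(z-x)$ now lies within ${\cal O}(\varepsilon)$ of $\gamma_R(z)$, and after a \emph{small} modification of the contour the analytic stationary phase (\cite{Sj}, Example~2.6) gives $b(x,z;h)=a(x)+{\cal O}(\la s\ra^k e^{-\delta/h})$. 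Summing over $j$ costs only $\la s\ra^{n+k}$, which is absorbed by the exponential. Without the cutoff in $x$, the critical point may sit far from $\gamma_R(z)$ and your stationary-phase reduction is not justified; Appendix~\ref{appA} concerns the $G_0$-contours of Section~\ref{sec-modevol}, not this situation.

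A last point: your argument for ``$R$ small'' (to force $z+i\zeta\in\C_\n$) does not match the proof, which takes $R^{-1/2}=\varepsilon$ and enlarges $R$ whenever needed. After the localization and the change of variable $y'=(y+z)/2+i\zeta$, one has $|y'-x|\le CR^{-1/2}$ with $x$ real, so $|\Im y'|\le CR^{-1/2}$ and $y'\in\C_\n$ for $R$ \emph{large}. The word ``small'' in the statement is evidently a slip for ``large''.
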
 

\bigskip
\begin{proof}
We first observe that  $\T \circ D_x = D_z \circ \T$. Moreover,  since $\Im Z(s) = {\cal O}(1)$, we 
have  that $\Im z$ remains uniformly bounded on $\Omega_s(Z(s),\varepsilon)$. Therefore, using Lemma \ref{derivHphi} in the appendix, we see that  $hD_z$ is uniformly bounded from $L^2_{\Phi_0}(\Omega_s(2\varepsilon))$ to $L^2_{\Phi_0}(\Omega_s(\varepsilon))$. 

In view of the form of $H$ and $\tilde H$, and by bringing all the derivatives to the left, we easily deduce that is enough to prove that, for any $a=a(x)$ holomorphic in $\Sigma_\nu$ and verifying $a(x) ={\cal O}(\la x\ra^k)$ for some fixed $k\geq 0$, and setting $\tilde a(z,\zeta ):= a(z+i\zeta )$, one has,
\be
\big\Vert \T (au) -  {\rm Op}_R(\tilde a)\T u\big\Vert_{L^2_{\Phi_0}(\Omega_s(Z(s),\varepsilon))}\leq Ce^{-1/Ch}\Vert u\Vert_{L^2(\re^n)},
\ee
with some constant $C>0$, and for all $s\in [0,T/h]$ and $u\in L^2(\re^n)$.

We compute,
\be
\label{opatildeTu}
{\rm Op}_R(\tilde a)\T u(z) =\int_{\re^n}e^{-(z-x)^2/2h}b(x,z;h) u(x)dx,
\ee
with,
$$
b(x,z;h):= \frac1{(2\pi h)^n} \int_{\gamma_R(z)}e^{(z-y)(i\zeta -x+(y+z)/2)/h }a\big( \frac{z+y}2 +i\zeta \big)dyd\xi.
$$
Now, the contour $\gamma_R(z)$ is not really good  for the phase $\Re \big( (z-y)(i\zeta -x+(y+z)/2)\big)$, because it does not contain the critical point  given by $y=z$ and $\zeta =i(z-x)$ (the critical value is 0), unless $x=\Re z$. However, for $R$ sufficiently large, along $\gamma_R(z)$ we have,
\begin{eqnarray}
&& \Re\big((z-y)(i\zeta -x+(y+z)/2)\big)\nonumber\\
&& \hskip 3cm  =-(R+\frac12 )|\Re (z-y)|^2-(R-\frac12)|\Im (z-y)|^2 \nonumber\\
&&\hskip 7cm   -(x-\Re z)\Re (z-y)\nonumber\\
\label{phasecommutT}
&& \hskip 3cm  \leq -\frac{R}2|z-y|^2+R^{-1/2}|x-\Re z|.
\end{eqnarray}
In particular, on the boundary of $\gamma_R$, we obtain,
$$
\Re\big((z-y)(i\zeta -x+(y+z)/2)\big)\leq -\frac12+R^{-1/2}|x-\Re z|.
$$
Then, we divide $\Omega_s(Z(s),\varepsilon)$ into,
$$
\Omega_s(Z(s),\varepsilon)=\bigcup_{j=1}^{N_s}\Omega_j,
$$
where $\Omega_j\subset \{ |z-z_j(s)| <\varepsilon\}$ for some $z_j(s)\in \Omega_s(Z(s),\varepsilon)$, and $N_s={\cal O}(\la s\ra^n)$.

We also choose a cut-off function $\chi_\varepsilon\in C_0^\infty (\{ |x|\leq 4\varepsilon\})$ verifying $\chi_\varepsilon (x)=1$ for $|x|\leq 3\varepsilon$, and, for $y$ such that ${\rm dist}(y ,\Omega_j)\leq R^{-1/2}$, we write,
\begin{eqnarray}
\label{decompTu}
\T u(y) &={}& \T (\chi (x-\Re z_j(s))u)(y) + \T ((1-\chi (x-\Re z_j(s)))u)(y) \nonumber\\
&=:&  v_j(y) +w_j(y).
\end{eqnarray}
Then, if we take $R^{-1/2}=\varepsilon$ and if  $x$ is in the support of $1-\chi (x-\Re z_j(s))$, we have $|x-\Re y|\geq |x-\Re z_j(s)| -|y-z_j(s)|\geq \varepsilon $. We deduce from this, 
\be
\label{estwj1}
e^{-\Phi_0(y)/h}|w_j(y)|\leq e^{-\varepsilon^2/2h}\Vert u\Vert_{L^2},
\ee
and thus, by the properties of continuity of ${\rm Op}_R(\tilde a)$ (see \cite{Sj, MNS2}),
\be
\label{estwj2}
\Vert {\rm Op}_R(\tilde a)w_j\Vert_{L^2_{\Phi_0}(\Omega_j)}\leq C\la s\ra^k e^{-\varepsilon^2/2h}\Vert u\Vert_{L^2}.
\ee
On the other hand, using (\ref{phasecommutT}), if $x$ in the support of $\chi (x-\Re z_j(s))$ and $z\in \Omega_j$, along $\gamma_R(z)$ we obtain,
$$
\Re\big((z-y)(i\zeta -x+(y+z)/2)\big)\leq -\frac{R}2|z-y|^2+4\varepsilon^2.
$$
In particular,  on the boundary of $\gamma_R(z)$ we have,
$$
\Re\big((z-y)(i\zeta -x+(y+z)/2)\big)\leq -\frac14,
$$
and by taking a slightly modified  contour, of the form,
$$
\zeta = -\Im z +iR(\overline{z-y}) -i(x-\Re z)\chi_0\big( R^{1/2}(y-z)\big)\quad ;\quad |y-z|\leq R^{-1/2},
$$
with $\chi_0(0)=1$, $\chi_0(r)=0$ if $|r|\geq 1/2$, we see that in this way (and possibly by taking $R$ larger), we obtain a good contour for the phase $\Re \big( (z-y)(i\zeta -x+(y+z)/2)\big)$. Indeed, along the new contour we have,
$$
\Re\big((z-y)(i\zeta -x+(y+z)/2)\big)\leq -\frac{R}2|z-y|^2+{\cal O}(\varepsilon R^{1/2}|z-y|^2),
$$
where, actually, $\varepsilon R^{1/2}=1$. As a consequence, and still for $x$ in the support of $\chi (x-\Re z_j(s))$, we can apply the analytic stationary phase theorem to compute the symbol $b$ (see \cite{Sj}, Theorem 2.1). In particular, setting, 
$$
y':= \frac{y+z}2 +i\zeta\quad ;\quad \xi =-i(y-z),
$$
the integral becomes,
$$
b(x,z;h)=\frac1{(2\pi h)^n}\int_{\gamma'(z)}e^{i(x-y')\xi /h}a(y')dy'd\xi,
$$
where $\gamma'(z)$ is a necessarily good contour, along which,
\begin{eqnarray*}
&& |x-y'| =|x+i\zeta -(y+z)/2| \leq |\zeta -i(z-x)| + |y-z|/2\\
&& \hskip 5cm \leq 2|x-\Re z| +3|y-z|/2\leq CR^{-1/2};\\
 &&\Re (i(x-y')\xi )\leq -R|x-y'|^2/C,
\end{eqnarray*}
for some constant $C>0$ independent of $R$ and $j$. Therefore,
 we are reduced to the same situation as  in \cite{Sj}, Example 2.6, and, for all $N\geq 1$, we obtain,
$$
b(x,z;h)=a(x) + R_N(x,z;h),
$$
with,
$$
|R_N(x,z;h)|\leq C' h^N(N+1)^nC^{-N}N!\sup_{|y-x|\leq CR^{-1/2}}|a(y)|.
$$
Taking $N=1/(C_1h)$ with $C_1\gg C$, and using the assumption on $a$, we finally obtain,
$$
|b(x,z;h)-a(x)|\leq \la s\ra^k e^{-\delta /h},
$$
uniformly for $x\in {\rm Supp}\hskip 1pt \chi (x-\Re z_j(s))$ and $z\in\Omega_j$, and 
where $\delta >0$ does not depend on $j$.

Coming back to the decomposition (\ref{decompTu}), setting $\chi_j(x):= \chi (x-\Re z_j(s))$, and using (\ref{opatildeTu}), this easily leads to,
$$
|  {\rm Op}_R(\tilde a)\T(1-\chi_j)u (z)-\T(1-\chi_j)au (z)|\leq \la s\ra^ke^{(\Phi_0(z)-\delta)/h}\Vert u\Vert_{L^2},
$$
for all $z\in\Omega_j)$, and thus,  using (\ref{estwj1})-(\ref{estwj2}),
$$
\big\Vert \T(au) -  {\rm Op}_R(\tilde a)\T u\big\Vert_{L^2_{\Phi_0}(\Omega_j)}={\cal O}(\la s\ra^ke^{-\delta'/h}\Vert u\Vert_{L^2(\re^n)}),
$$
with $\delta'=\min(\delta, \varepsilon^2/2)$. Summing over $j\in \{ 1,\cdots, N_s\}$, we finally obtain,
$$
\big\Vert \T(au) -  {\rm Op}_R(\tilde a)\T u\big\Vert_{L^2_{\Phi_0}(\Omega_s(Z(s),\varepsilon))}={\cal O}(\la s\ra^{k+n}e^{-\delta'/h}\Vert u\Vert_{L^2(\re^n)}),
$$
and since $s={\cal O}(h^{-1})$, the result follows.
\end{proof}

\bigskip
Setting,
$$
w(s):= G_0(s)\tilde v(s) = G_0(s)\T (e^{-ihsH}u_0),
$$
we deduce from Propositions \ref{estevol} and  \ref{QEvol} that, for any $z_0\in\co^n\cap\{|\Im z| >\delta'\}$ and for some constants $\varepsilon >0$ and $C>0$,  $w(s)$ verifies,
\be
\label{equationconj}
\big\Vert ih\frac{\partial w}{\partial s} - L(s) w(s)\big\Vert_{L^2_{\Phi_0}(\Omega_s(z_0,\varepsilon))}\leq Ce^{-1/Ch},
\ee
where we have set,
\be
\label{operateurconj}
L(s):= G_0(s)\tilde QG_0(s)^{-1} -(\partial_sW)(s, hD_z).
\ee
Then, as in the $C^\infty$ case, the following analogue of \cite{Na4} Lemma 3.1 is essential.
\begin{prop}\sl
\label{propQconj} Let $(x_0,\xi_0)\in T^*\re^{n}\backslash 0$ be forward non trapping, and set,
$$
z_+(x_0,\xi_0):= x_+(x_0,\xi_0) -i\xi_+(x_0,\xi_0).
$$
Then, for any $\varepsilon >0$ small enough, the operator $B(s):=G_0(s)\widetilde QG_0(s)^{-1} $ is an analytic pseudodifferential operator on $H_{\Phi_0}(\Omega_s(z_+(x_0,\xi_0),\varepsilon))$, with a symbol $b$ verifying,
\begin{eqnarray*}
b (s,z,\zeta;h) =(\tilde q_0+h\tilde q_1+h^2\tilde q_2) (z+\partial_\zeta \widetilde W(s,\zeta), \zeta ) +{\cal O}(h\la s\ra^{-1-\sigma})\\
  +{\cal O}(h^2\la s\ra^{-\sigma} +h^3\la s\ra^{1-\sigma}),
\end{eqnarray*}
uniformly for $s\geq 0$, $z\in\Omega_s(z_+(x_0,\xi_0),\varepsilon))$, $|\zeta +\Im z|$ small enough, and $h>0$ small enough.
\end{prop}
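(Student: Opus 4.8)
The plan is to run an Egorov-type computation in Sj\"ostrand's analytic framework, keeping careful track of the dependence on $s$. Writing $G_0(s)^{-1}=G_1(s)$ and inserting the pseudodifferential representation (\ref{quantizPDO}) of $\widetilde Q$ between the Fourier integral operators (\ref{defG0}) and (\ref{defG1}), I would first express $B(s)v(z)=G_0(s)\widetilde QG_1(s)v(z)$ as a single multiple contour integral of the form
\[
\frac1{(2\pi h)^{3n}}\int e^{i\Theta_s/h}\,q\bigl(\tfrac{z_1+z_2}2+i\zeta,\zeta;h\bigr)\,v(y)\,dz_1\,d\eta_1\,dz_2\,d\zeta\,dy\,d\eta_2 ,
\]
with $\Theta_s=(z-z_1)\eta_1+\widetilde W(s,\eta_1)+(z_1-z_2)\zeta+(z_2-y)\eta_2-\widetilde W(s,\eta_2)$. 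Since the two copies of $\widetilde W$ enter with opposite signs and the underlying canonical transformation is the shear $U_s\colon(z,\zeta)\mapsto(z-\partial_\zeta\widetilde W(s,\zeta),\zeta)$, stationary phase in the inner variables $(z_1,\eta_1)$ and $(z_2,\eta_2)$ forces, at the critical point, $\eta_1=\eta_2=\zeta$ and $z_1=z+\partial_\zeta\widetilde W(s,\zeta)$, $z_2=y+\partial_\zeta\widetilde W(s,\zeta)$, so the surviving phase is the pseudodifferential phase $(z-y)\zeta$; thus $B(s)$ is an analytic pseudodifferential operator, whose leading symbol is $q$ composed with $U_s^{-1}$, that is $(\tilde q_0+h\tilde q_1+h^2\tilde q_2)(z+\partial_\zeta\widetilde W(s,\zeta),\zeta)$ (recall $\tilde q_j(z,\zeta)=q_j(z+i\zeta,\zeta)$). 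Equivalently, $G_0(s)$ is, up to a negligible error, the Fourier multiplier $e^{i\widetilde W(s,hD_z)/h}$, and conjugating a symbol $a(z,\zeta)$ by it produces, to leading order, $a(z+\partial_\zeta\widetilde W(s,\zeta),\zeta)$.

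Next I would make this rigorous on $H_{\Phi_0}(\Omega_s(z_+(x_0,\xi_0),\varepsilon))$. The symbol calculus of \cite{Sj} does not apply off the shelf, because the contours $\gamma(s,z),\gamma'(s,z)$ of (\ref{contourG0}), (\ref{contourG1}) — and the intermediate contours produced when composing a pseudodifferential operator with these Fourier integral operators — grow with $\la s\ra$; so one first uses the contour-deformation results of Appendix~A (in the spirit of Lemma~\ref{defcont1}) to reduce to $z$-independent contours, and only then applies the analytic stationary phase theorem of \cite{Sj} in the inner variables. What makes the expansion converge with an $h$-exponentially small remainder \emph{uniformly} for $s\in[0,T/h]$ is that, by Lemma~\ref{leboncontour} and its analogue for $\gamma'$, these are good contours for the relevant phases uniformly in $s$, after the anisotropic rescaling $\Re(z-y)\mapsto\la s\ra^{-1}\Re(z-y)$ built into their definition; the bound $\partial_\zeta^\alpha W(s,\zeta)={\cal O}(\la s\ra)$ of (\ref{estHessW}), in particular $A_s(z)={\rm Hess}_\zeta W(s,-\Im z)={\cal O}(\la s\ra)$, would be used repeatedly to control these contours and the successive terms of the expansion.

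Then I would estimate the size of the corrections, which is the actual content of the statement. Each term of the stationary phase (equivalently, symbol composition) expansion beyond the leading one carries at least one extra power of $h$, and is a finite sum of products of: (i) derivatives $\partial_x^\beta a_\alpha$ of the coefficients, evaluated at a point which, on the reduced contour, equals $z+\partial_\zeta\widetilde W(s,\zeta)+i\zeta$; by (\ref{lienWflot}) this point essentially equals $\Re z+\hat x(s,-\Im z)$, hence by Lemma~\ref{estflow} it has modulus comparable to $s$ (and stays in $\C_\nu$), so Assumption~A gives $\partial_x^\beta(a_{j,k}-\delta_{j,k})={\cal O}(\la s\ra^{-\sigma-|\beta|})$, $\partial_x^\beta a_j={\cal O}(\la s\ra^{1-\sigma-|\beta|})$, $\partial_x^\beta a_0={\cal O}(\la s\ra^{2-\sigma-|\beta|})$; and (ii) derivatives $\partial_\zeta^\gamma\widetilde W(s,\zeta)$, each of size ${\cal O}(\la s\ra)$. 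The most dangerous terms are those with the fewest coefficient-derivatives — essentially ${\rm Hess}_\zeta\widetilde W$ paired with ${\rm Hess}_x$ of a coefficient, together with the ${\cal O}(h)$ sub-principal parts coming from the non-commutativity in $\tfrac12 D_j a_{j,k}D_k$ and $\tfrac12(a_jD_j+D_ja_j)$. For $\widetilde Q_0$ this gives $h\cdot{\cal O}(\la s\ra)\cdot{\cal O}(\la s\ra^{-2-\sigma})={\cal O}(h\la s\ra^{-1-\sigma})$; for $h\widetilde Q_1$, $h\cdot h\cdot{\cal O}(\la s\ra)\cdot{\cal O}(\la s\ra^{-1-\sigma})={\cal O}(h^2\la s\ra^{-\sigma})$; for $h^2\widetilde Q_2$, $h^2\cdot h\cdot{\cal O}(\la s\ra)\cdot{\cal O}(\la s\ra^{-\sigma})={\cal O}(h^3\la s\ra^{1-\sigma})$; every remaining term is of the same or smaller order, while the $\delta_{j,k}$ part of $a_{j,k}$ (the free Laplacian) commutes with the Fourier multiplier $G_0(s)$ and so contributes no correction at all. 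Summing gives the announced expansion of $b$, and at $s=0$, where $\widetilde W(0,\cdot)=0$, one recovers the symbol of $\widetilde Q$, consistently with $G_0(0)\sim I$.

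The hard part is the uniformity in $s$ up to $s=T/h$: one must check that the contour deformations of Appendix~A genuinely apply to the large, $s$-dependent contours arising here, and that every term of the analytic stationary phase expansion — of which ${\cal O}(1/h)$ are needed to reach ${\cal O}(e^{-1/Ch})$ accuracy — is tamed by the decay at infinity of the coefficients along the classical trajectory (which escapes at unit speed, $|\hat x(s,\cdot)|\sim s$) beating the polynomial-in-$\la s\ra$ growth produced by the derivatives of $\widetilde W$. This is exactly the recurrent difficulty emphasized in the introduction, and it is where Lemma~\ref{estflow} and the uniform goodness of the contours from Lemma~\ref{leboncontour} do the real work.
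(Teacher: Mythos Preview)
Your overall strategy is sound and your final size estimates for the corrections match the paper's, but the paper streamlines the computation in two ways that are worth noting.

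First, instead of writing $B(s)$ as a triple composition over a $6n$-dimensional contour, the paper observes that $\widetilde Q\,G_0(s)^{-1}v(z)$ is already an FIO of the form
\[
\frac1{(2\pi h)^n}\int_{\gamma'(s,z)}e^{i(z-y)\eta/h-i\widetilde W(s,\eta)/h}\,\tilde q(z,\eta;h)\,v(y)\,dy\,d\eta,
\]
where $\tilde q(z,\eta;h)=e^{-iz\eta/h}\widetilde Q(e^{i(\cdot)\eta/h})$ is the \emph{standard} (left) symbol of $\widetilde Q$; this cuts the integral down to $4n$ dimensions from the start, and the decomposition $\tilde q=\tilde q_0+h\tilde q_1+h^2\tilde q_2+hr_0+h^2r_1+h^3r_2$ with $r_j={\cal O}(\la z\ra^{j-1-\sigma})$ already encodes the subprincipal contributions you track by hand. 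Second---and this is the key algebraic step---after composing with $G_0(s)$ the paper writes
\[
\widetilde W(s,\zeta)-\widetilde W(s,\eta)=(\zeta-\eta)\,\widetilde W_1(s,\zeta,\eta),\qquad \widetilde W_1\big|_{\zeta=\eta}=\partial_\zeta\widetilde W(s,\zeta),
\]
and performs the translation $y\mapsto y+\widetilde W_1(s,\zeta,\eta)$. This removes the $s$-growing $\widetilde W$ terms from the phase entirely: what remains is the standard pseudodifferential phase $(z-x)\eta+(z-y)(\zeta-\eta)$, with all the $s$-dependence pushed into the amplitude $\tilde q(y+\widetilde W_1,\eta;h)$. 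The required contour deformation is then exactly Lemma~\ref{defcont2}, and the analytic stationary phase (\cite{Sj}, Example~2.6) applies with an $s$-independent Hessian, giving the expansion $b\sim\sum h^jb_j$ with the stated bounds on $b_j$ directly.

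Your direct route would also reach the goal, but the phase Hessian in $(z_1,\eta_1,z_2,\eta_2)$ carries entries $\partial_\eta^2\widetilde W(s,\cdot)={\cal O}(\la s\ra)$, so uniformity of the stationary-phase remainder for $s\sim h^{-1}$ needs separate justification, and the $6n$-dimensional contour deformation you invoke ``in the spirit of Lemma~\ref{defcont1}'' is not among those actually proved in Appendix~A. The paper's $\widetilde W_1$ translation buys both a bounded Hessian and a deformation lemma (Lemma~\ref{defcont2}) tailored precisely to the resulting $4n$-contour, which is why the argument there closes without further work.
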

\begin{proof}
We first observe that, by (\ref{defG1}), we have,
$$
\tilde QG_0(s)^{-1}v(z)=\frac1{(2\pi h)^n}\int_{\gamma' (s,z)} e^{i(z-y)\eta /h -i\widetilde W(s,\eta)/h}\tilde q(z,\eta ;h)v(y)dyd\eta,
$$
where $\tilde q(z,\eta ;h):= e^{-iz\eta /h}\widetilde Q (e^{i(\cdot)\eta/h})$ is nothing but the symbol of $\widetilde Q$ for the standard quantization (see \cite{Sj}, Section 4), and, using the symbolic calculus (see, e.g., \cite{Ma2} Section 2) and Assumption A, one finds,
$$
\tilde q = \tilde q_0 + h\tilde q_1 + h^2 \tilde q_2 +hr_0(z+i\eta ,\eta) +h^2r_1(z+i\eta ,\eta)+h^3r_2(z+i\eta ,\eta),
$$
where the $\tilde q_j$'s ($0\leq j\leq 2$) are defined in (\ref{defqj}), and the $r_j(z,\zeta)$'s are holomorphic functions of $(z,\zeta )$ on $\Sigma_\nu\times \co^n$, and verify,
$$
r_j(z,\zeta ;h)={\cal O}(\la z\ra^{j-1-\sigma}), \quad (j=0,1,2),
$$
uniformly with respect to  $z\in\Sigma_\nu$ and $h>0$ small enough, and locally uniformly with respect to $\zeta\in\co^n$.

\bigskip
Then, we have,
$$
G_0(s)\tilde QG_0(s)^{-1}v(z)=\frac1{(2\pi h)^{2n}}\int_{\Gamma (s,z)} e^{i\phi /h}\tilde q(y,\eta ;h)v(x)dxd\eta dy d\zeta,
$$
where
$$
\phi =\phi (s,x,y,z,\eta,\zeta):= (z-y)\zeta +(y-x)\eta +\widetilde W(s,\zeta )-\widetilde W(s,\eta ),
$$
and $\Gamma (s,z)$ is the $4n$-contour given by,
$$
\Gamma (s,z)\, :\quad (y,\zeta )\in \gamma (s,z)\, ;\, (x,\eta )\in \gamma'(s,y).
$$
(The $2n$-contours $\gamma (s,z)$ and $\gamma' (s,y)$ are defined in (\ref{contourG0}) and (\ref{contourG1}), respectively.) In particular, $\Gamma (s,z)$ is automatically a good contour for the phase $\Phi_0(x)-\Im\phi$, uniformly with respect to $s\geq 0$.

\bigskip
Now, we write,
$$
\widetilde W(s,\zeta )-\widetilde W(s,\eta ) =(\zeta -\eta )\widetilde W_1(s,\zeta,\eta ),
$$
where $\widetilde W_1$ is well defined and holomorphic near $\zeta =\eta =\xi_+(x_0,\xi_0)$, and is equal  to $\partial_\zeta \widetilde W(s,\zeta )$ when  $\eta =\zeta$. Thus, we have,
$$
\Phi = (z-x)\eta + (z-y+\widetilde W_1(s,\zeta, \eta))(\zeta -\eta ).
$$
Setting $y'=y-\widetilde W_1(s,\zeta, \eta)$ (and dropping the prime), we obtain,
\begin{eqnarray}
G_0(s)\tilde QG_0(s)^{-1}v(z)=\frac1{(2\pi h)^{2n}}\int_{\Gamma' (s,z)} e^{i(y-x)\eta/h + i(z-y)\zeta  /h}\nonumber\\
\label{conjugQtilde}
\times \tilde q(y+\widetilde W_1,\eta ;h)v(x)dxd\eta dy d\zeta,
\end{eqnarray}
with $\Gamma' (s,z)$  given by,
\be
\label{contourGamma'}
\Gamma' (s,z)\, :\quad (y+\widetilde W_1(s,\zeta,\eta ),\zeta )\in \gamma (s,z)\, ;\, (x,\eta )\in \gamma'(s,y+\widetilde W_1).
\ee
Of course, $\Gamma' (s,z)$ is necessarily a good contour for the new phase $\Phi_0(x)-\Im ((y-x)\eta + (z-y)\zeta )$, uniformly with respect to $s$, and another such uniform good contour is given by,
\be
\label{contourGamma'1}
\Gamma_1' (s,z)\, :
\left\{
\begin{array}{l}
\eta = -\Im z +i(\overline{z-x});\\
 \zeta =\eta +i (\overline{z-y});\\
|z-x|<r\, ;\, |z-y|<r,
 \end{array}
 \right.
\ee
with $r>0$ small enough.

\bigskip
As before, the possibility of substitution of $\Gamma_1' (s,z)$ to $\Gamma' (s,z)$ is not completely obvious, but we prove it in the Appendix (see Lemma \ref{defcont2}). As a consequence (and up to exponentially small terms in $H_{\Phi_0}(\Omega_s(z_+(x_0,\xi_0),\varepsilon)))$, we obtain,
\be
\label{Qconj=OPD}
G_0(s)\tilde QG_0(s)^{-1}v(z)=\frac1{(2\pi h)^{n}}\int_{\gamma (z)} e^{i(z-x)\eta/h}b(s,z,\eta )v(x)dxd\eta,
\ee
with,
$$
b(s,z,\eta )= \frac1{(2\pi h)^{n}}\int_{\gamma'(z,\eta)} e^{i(z-y)(\zeta -\eta ) /h}\tilde q(y+\widetilde W_1(s,\zeta, \eta),\eta ;h)dyd\zeta
$$
and where the two $2n$-contours of integration are given by,
\begin{eqnarray*}
&& \gamma (z)\, :\left\{
\begin{array}{l}
\eta = -\Im z +i(\overline{z-x});\\
|z-x|<r,\\
 \end{array}
 \right.
\\
&& \gamma' (z,\eta )\, :\left\{
\begin{array}{l}
  \zeta =\eta +i (\overline{z-y});\\
 |z-y|<r.
 \end{array}
 \right.
\end{eqnarray*}
Here, we must observe that, for $|\eta +\Im z|+|\zeta -\eta|+ |y-z|$ small enough and $z\in \Omega_s(z_+(x_0,\xi_0),\varepsilon))$, one has,
$$
y+\widetilde W_1(s,\zeta, \eta)=z_+(x_0,\xi_0)+\partial_\zeta \widetilde W (s,\xi_+(x_0,\xi_0)) +\la s\ra \theta
$$
with $\theta\in\co^n$, $|\theta|$ arbitrarily small (uniformly with respect to $s$). Moreover, by Lemma \ref{estflow} and (\ref{lienWflot}), we also have,
$$
|\partial_\zeta \widetilde W (s,\xi_+(x_0,\xi_0))|\geq \frac{s}{C} -C,
$$
for some constant $C>0$. Therefore, since $\partial_\zeta \widetilde W (s,\xi_+(x_0,\xi_0))$ is real, we deduce that $y+\widetilde W_1(s,\zeta, \eta)$ remains in $\Sigma_\nu$, and thus $\tilde q(y+\widetilde W_1(s,\zeta, \eta),\eta ;h)$ is a well defined symbol in the region of integration. As a consequence, we can apply the analytic stationary phase theorem again (exactly as in \cite{Sj} Example 2.6), and we obtain,
\begin{eqnarray*}
b(s,z,\eta ;h)&\sim& \sum_{k\geq 0}\frac{h^k}{i^kk!}(\nabla_y\cdot\nabla_\zeta)^k\tilde q(y+\widetilde W_1(s,\zeta, \eta),\eta ;h)_{\left\vert_{{y=z}\atop{\zeta =\eta}}\right.}\\
&\sim& \sum_{j\geq 0}h^jb_j(s,z,\eta )
\end{eqnarray*}
 in the sense of analytic symbols on $\{ (z,\eta )\, ;\, z\in\Omega_s(z_+(x_0,\xi_0),\varepsilon)),\, |\eta +\Im z|<r\}$, and uniformly with respect to $s$, with,
\begin{eqnarray*}
&& b_0(s,z,\eta ) = \tilde q_0 (z+\partial_\zeta \widetilde W(s,\eta), \eta );\\
&& b_1(s, z, \eta ) = \tilde q_1 (z+\partial_\zeta \widetilde W(s,\eta), \eta )+{\cal O}(\la s\ra^{-1-\sigma});\\
&& b_2(s, z, \eta ) = \tilde q_2 (z+\partial_\zeta \widetilde W(s,\eta), \eta )+{\cal O}(\la s\ra^{-\sigma});\\
&& b_j(s,z,\eta ) ={\cal O}(\la s\ra^{1-\sigma}) \quad (j\geq 3),  
\end{eqnarray*}
uniformly for $s\geq 0$, $z\in\Omega_s(z_+(x_0,\xi_0),\varepsilon))$, $|\eta +\Im z|<r$.

\bigskip
In view of (\ref{Qconj=OPD}), this concludes the proof of Proposition \ref{propQconj}.
\end{proof}

\section{Proof of Theorem \ref{mainth}}\label{sec-proofmainth}

We start from (\ref{equationconj})-(\ref{operateurconj}) with $z_0=z_+(x_0,\xi_0)$, where $(x_0,\xi_0)$ is forward non trapping. Proposition \ref{propQconj} tells us that $L(s)$ is an analytic pseudodifferential operator on $H_{\Phi_0}(\Omega_s(z_+(x_0,\xi_0),\varepsilon))$, with symbol $\ell (s,z,\zeta;h)$ verifying,
\begin{eqnarray*}
\ell (s,z,\zeta;h) =(\tilde q_0+h\tilde q_1+h^2\tilde q_2) (z+\partial_\zeta \widetilde W(s,\zeta), \zeta ) -(\partial_sW)(s, \zeta)\\
 +{\cal O}(h\la s\ra^{-1-\sigma} +h^2\la s\ra^{-\sigma} +h^3\la s\ra^{1-\sigma}).
\end{eqnarray*}
In particular, uniformly for $s\in[0,T/h]$ with $T>0$ fixed,  we obtain,
$$
\ell (s,z,\zeta;h) =q (z+i\zeta +\partial_\zeta \widetilde W(s,\zeta), \zeta ;h ) -(\partial_sW)(s, \zeta)+{\cal O}(h\la s\ra^{-1-\sigma}),
$$
where $q=q_0+hq_1+h^2q_2$ is as in Section \ref{secPrelim}.

\bigskip
Then, denoting by $\kappa (x,\xi ):=(x-i\xi,\xi)$ the complex canonical map associated with $\T$, we observe that the Hamilton flow $\widetilde R_s$ of $\ell_0 (s,z,\zeta;h):= q (z+i\zeta +\partial_\zeta \widetilde W(s,\zeta), \zeta ;h ) -(\partial_sW)(s, \zeta)$ is given by,
$$
\widetilde R_s = \kappa \circ R_s\circ \kappa^{-1},
$$
with,
$$
R_s(x,\xi ;h):=(\tilde x(s,x,\xi ;h) -\partial_\zeta \widetilde W(s, \tilde\xi (s,x,\xi ;h)), \tilde\xi (s,x,\xi;h)),
$$
(where, as before, $(\tilde x(s,x,\xi ;h), \tilde \xi(s,x,\xi ;h))=\exp sH_q(x,\xi)$).

\bigskip
Then, by Lemma \ref{modclassevol}, we see that for any fixed $t>0$ and for any $(x,\xi)$ in a small enough complex neighborhood of $(x_0,\xi_0)$, one has
$$
\lim_{h\rightarrow 0_+} \widetilde R_{t/h} (x-i\xi,\xi;h)=(x_+(x,\xi)-i\xi_+(x,\xi), \xi_+(x,\xi)).
$$
Let us also observe that, by construction, we have,
$$
\ell_0 (s,z,\zeta;h)= q (z+i\zeta +\partial_\zeta \widetilde W(s,\zeta), \zeta ;h )-q (\partial_\zeta W(s,\zeta), \zeta ;h ),
$$
and thus, for $(z, \zeta)$ close enough to $(z_+(x_0,\xi_0),\xi_+(x_0,\xi_0))$, we obtain,
$$
\ell_0 (s,z,\zeta;h)={\cal O}(\la s\ra^{-\sigma-1}+h\la s\ra^{-\sigma} +h^2\la s\ra^{1-\sigma})={\cal O}(\la s\ra^{-\sigma-1}),
$$
uniformly for $h>0$ small enough and $s\in [0,T/h]$.

\bigskip
>From this point, the proof becomes very similar to that of the short range case \cite{MNS2}, and we only sketch it.

\bigskip
Setting $(\tilde z_s(z,\zeta ),\tilde\zeta_s(z,\zeta )):=\widetilde R_s(z,\zeta)$ and $z_1:=\tilde z_{s_1} (x_0-i\xi_0, \xi_0)$ with $s_1>0$ large enough, then for all $s\geq 0$, one first construct an analytic  Fourier integral operator,
$$
F(s) : H_{\Phi_0,z_1} \rightarrow H_{\Phi_0, \tilde z_s(z_1,-\Im z_1)},
$$
of the form,
$$
F(s)v (z) =\frac1{(2\pi h)^n}\int_{\gamma_s (z)} e^{i(\psi (s,z,\eta ) -y\eta )/h}v(y)dyd\eta,
$$
where  $\gamma_s (z)$ is a convenient $2n$-contour and $\psi $ is a holomorphic function of $(z,\eta)$ near $(\tilde z_s(z_1,-\Im z_1),-\Im z_1)$,  solution of the system (eikonal equation),
\begin{equation}
\label{eiko}
\left\{
\begin{array}{l}
\partial_s\psi + \ell_0 (s,z,\nabla_z\psi;h ) =0; \\
 \psi\left\vert_{s=0}\right. = z.\eta,
\end{array}\right.
\end{equation}
In particular, $\psi$ also depends on $h$ (but in a well-controled way), and it quantizes the canonical map $\widetilde R_s$, in the sense that one has,
$$
(z,\nabla_z\psi (s,z,\eta ) )= \widetilde R_s (\nabla_\eta\psi (s, z,\eta )).
$$
Moreover, $F(s)$ verifies,
$$
ih\partial_sF(s) -L(s)F(s) = hF_1(s),
$$
where $F_1(s)\, :\, H_{\Phi_0,z_1} \rightarrow H_{\Phi_0, \tilde z_s(z_1,-\Im z_1)}$ has the same form as $F(s)$, but with some symbol $f_1(s,z,\eta;h)$ that is ${\cal O}(\jap{s}^{-1-\sigma})$ uniformly for  $s\in [0,T/h]$, $h>0$ small enough. Moreover, using that, for any $t>0$,   $\tilde z_{t/h}(z,\zeta)$ tends to $z_\infty (z,\zeta):= z_+(z+i\zeta,\zeta)$ as $h\rightarrow 0_+$, one can prove, as in \cite{MNS2} Section 6, that, for any given $\varepsilon_1>0$, there exists $\varepsilon_2>0$ such that, for all $h>0$ small enough,  $F(t/h)$ sends $H_{\Phi_0}(|z-z_1|<\varepsilon_2)$ into $H_{\Phi_0}(|z-z_\infty (z_1, -\Im z_1)|<\varepsilon_1)$.

\bigskip
Similarly, still for $s\in [0, T/h]$, one can construct a Fourier integral operator,
$$
\widetilde F(s) : H_{\Phi_0, \tilde z_s(z_1,-\Im z_1)}   \rightarrow H_{\Phi_0,z_1}
$$ 
verifying,
\be
\label{eqFtilde}
ih\partial_s\widetilde F(s) +\widetilde F(s)L(s) =h\widetilde F_1(s),
\ee
where the Fourier integral operator  $\widetilde F_1(s)$ has the same phase as $\widetilde F(s)$ and a symbol ${\cal O}(\la s\ra^{-1-\sigma})$. Moreover, for any $t>0$ and $\varepsilon_1>0$, there exists $\varepsilon_2>0$ such that, for all $h>0$ small enough,  $\widetilde F(t/h)$ sends $H_{\Phi_0}(|z-z_\infty (z_1, -\Im z_1)|<\varepsilon_2)$ into $H_{\Phi_0}(|z-z_1|<\varepsilon_1)$.

\bigskip
Then, setting,
$$
\tilde w(s):= \widetilde F(s)w(s)=\widetilde F(s) G_0(s)\T (e^{-ihsH}u_0),
$$
we deduce from (\ref{equationconj})-(\ref{eqFtilde}) that $\tilde w$ verifies,
$$
ih\partial_s\tilde w(s) =h\widetilde F_1(s)w(s)=h\widetilde F_1(s)\widetilde A(s)F(s)\tilde w(s)\quad \mbox{ in } H_{\Phi_0,z_1},
$$
where $\widetilde A(s)$ is a parametrix of the elliptic analytic pseudodifferential operator $A(s):=F(s)\widetilde F(s)$ on $H_{\Phi_0, \tilde z_s (z_1, -\Im z_1)}$.

\bigskip
Observing that $\widetilde F_1(s)\widetilde A(s)F(s)$ is an analytic pseudodifferential operator on $H_{\Phi_0,z_1}$, with symbol uniformly ${\cal O}(\la s\ra^{-1-s})$ for $s\in[0, T/h]$, we easily deduce,
\be
\label{estderivenorm}
\big\vert \partial_s\Vert \tilde w(s)\Vert_{L^2_{\tilde\Phi_0}(|z-z_1|<\varepsilon_1)}^2\big\vert \leq C\la s\ra^{-1-\sigma}\Vert \tilde w(s)\Vert_{L^2_{\tilde\Phi_0}(|z-z_1|<2\varepsilon_1)}^2,
\ee
for some positive constants $\varepsilon_1$ small enough and $C$ large enough, and where $\tilde\Phi_0$ is such that $|\tilde\Phi_0 -\Phi_0|$ and $|\nabla_{(z,\overline z)}(\tilde \Phi_0 -\Phi_0 )|$ are small enough, $  \tilde \Phi_0\geq \Phi_0$ on $\{ |z-z_1|\leq \varepsilon_1\}$, 
$\tilde \Phi_0 = \Phi_0$ on $\{ |z-z_1|\leq \varepsilon_1/4\}$, and 
$\tilde \Phi_0>\Phi_0 +\delta_1$ on $\{ |z-z_1|\geq \varepsilon_1/2\}$ for some constant $\delta_1>0$.

\bigskip
Note that, in (\ref{estderivenorm}), the estimate is better than the analogous one obtained for the short range case in \cite{MNS2} (see formula (5.6) in \cite{MNS2}). This is due to the fact that, in (\ref{eiko}), we have left the whole symbol $\ell_0$ instead of taking just its principal part $q_0 (z+i\zeta +\partial_\zeta \widetilde W(s,\zeta), \zeta ;h )-q_0 (\partial_\zeta \widetilde W(s,\zeta), \zeta ;h )$. In \cite{MNS2}, this had some interest because the symbol obtained in this way did not depend on $h$, and the construction could be done for all $s\geq 0$ (without limitation of order $h^{-1}$). But here, in any case we have  an $h$-dependence inside $\widetilde W$.

\bigskip
Because of the choice of $\tilde\Phi_0$, and the fact that $\Vert e^{-ihsH}u_0\Vert_{L^2} =\Vert u_0\Vert_{L^2}$ does not depend on $s$, we deduce from (\ref{estderivenorm}),
$$
\big\vert \partial_s\Vert \tilde w(s)\Vert_{L^2_{\tilde\Phi_0}(|z-z_1|<\varepsilon_1)}^2\big\vert \leq C\la s\ra^{-1-\sigma}\Vert \tilde w(s)\Vert_{L^2_{\tilde\Phi_0}(|z-z_1|<\varepsilon_1)}^2+Ce^{-\delta_1/h},
$$
with some constant $\delta_1>0$, and thus, since $\int_0^\infty \la s\ra^{-1-\sigma}ds <\infty$, we obtain,
\begin{eqnarray}
\label{propagineq1}
\Vert \tilde w(s)\Vert_{L^2_{\tilde\Phi_0}(|z-z_1|<\varepsilon_1)}^2\leq C'\Vert \tilde w(0)\Vert_{L^2_{\tilde\Phi_0}(|z-z_1|<\varepsilon_1)}^2+C'e^{-\delta_1/h};\\
\label{propagineq2}
\Vert \tilde w(0)\Vert_{L^2_{\tilde\Phi_0}(|z-z_1|<\varepsilon_1)}^2\leq C'\Vert \tilde w(s)\Vert_{L^2_{\tilde\Phi_0}(|z-z_1|<\varepsilon_1)}^2+C'e^{-\delta_1/h},
\end{eqnarray}
with some new constant $C'>0$, and where the inequalities holds for all $s\in [0,T/h]$, $h>0$ small enough. 

\bigskip
Now, if  $(x_0,\xi_0)$ is not in $WF_a (u_0)$, then, by standard propagation of singularities (see, e.g., \cite{Ma2}),  we know that $R_{s_1}(x_0,\xi_0)\notin WF_a(u)$, and thus $\tilde w(0)$ is exponentially small in a neighborhood of $z_1:= \kappa \circ R_{s_1}(x_0,\xi_0)$. Then, (\ref{propagineq1}) tells us that, for all $s\in [0,T/h]$,  $\tilde w(s)$ is exponentially small in some fixed neighborhood of $z_1$. In particular, taking $s=t/h$ with $t>0$ fixed, we deduce that, for $h$ small enough, $G_0(t/h)\T (e^{-itH}u_0)$ is exponentially small near $z_\infty (z_1,-\Im z_1)=z_+(x_0,\xi_0)$. The converse can be seen in the same way by using (\ref{propagineq2}), and thus Theorem \ref{mainth} is proved.

\bigskip

\appendix
%%%%%%%%%%%%%%%%%%%%%%%%%%%  APPENDIX %%%%%%%%%%%%%%%%%%%%%%%%%%%
\vskip 1cm
\centerline{\bf APPENDIX}

\bigskip
%%%%%%%%%%%%%%%%%%%%%%%%%%%%%%%%%%%%%%%%%%%%%%%%%%%%%%%%%%%%%%%%%
\section{Deformation of non-local good contours}\label{appA}
\begin{lem}\sl
\label{defcont1}
Let $\Gamma_0(s,z)$ and $\Gamma_1 (s,z)$ be the two contours given in (\ref{chcont1})-(\ref{chcont2}) with $R\geq 1$ large enough.\\
 Then, there exists a deformation of contours $[0,1]\ni t\mapsto \widetilde\Gamma_t(s,z)$ with $ \widetilde\Gamma_0(s,z)\subset\Gamma_0(s,z)$ and $ \widetilde\Gamma_1(s,z)\subset \Gamma_1(s,z)$, such that, for all $t\in [0,1]$, $ \widetilde\Gamma_t(s,z)$ is a good contour for the phase $(z',y,\zeta,\eta) \mapsto \Phi_0(y)-\Im ((z-z')\zeta +(z'-y)\eta  +\widetilde W(s,\eta))$ uniformly with respect to $s$, and, for $j=0,1$,  one has,
 \begin{eqnarray*}
 \inf_{s\geq 0}\,\, \inf_{\Gamma_j(s,z)\backslash\widetilde\Gamma_j(s,z)}\big[\Phi_0(y)-\Phi_0(z)-\Im ((z-z')\zeta +(z'-y)\eta  +\widetilde W(s,\eta))\big]\\
 <0.
 \end{eqnarray*}
 Moreover, for any $\varepsilon_1>0$, if $z\in \Omega_s(z_+(x_0,\xi_0),\varepsilon_2)$ with $\varepsilon_2>0$ small enough, then, along $\Gamma_t(s,z)$, both $\eta$ and $\zeta$ remain in an arbitrary small neighborhood of $\xi_+(x_0,\xi_0)$, while $y$ remains in $\Omega_s (Z_s,\varepsilon_1)$. In particular, in the integral (\ref{dW0G0}) one can substitue $\Gamma_1(s,z)$ to $\Gamma_0 (s,z)$, up to an exponentially small term in $H_{\Phi_0}(\Omega_s(z_+(x_0,\xi_0),\varepsilon_2)))$.
\end{lem}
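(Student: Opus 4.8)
The plan is to construct the deformation explicitly by interpolating between the two defining conditions, and then to check positivity of the gained exponent by a Taylor expansion centered at the common critical point, exactly as in the proof of Lemma \ref{leboncontour}. Recall that $\Gamma_0(s,z)$ is parametrized by $(z',\zeta)$ via $\zeta=-\Im z+iR(\overline{z-z'})$, $|z-z'|\leq r$, and $(y,\eta)\in\gamma(s,z')$, while $\Gamma_1(s,z)$ is parametrized by $(y,\eta)\in\gamma(s,z)$, $z'=z-iR^{-1}(\overline{\zeta-\eta})$, $|\zeta-\eta|<r'$. Both share the critical point $z'=z$, $\zeta=\eta=-\Im z$, $y=z+\nabla_\eta\widetilde W(s,-\Im z)$. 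First I would note that the two contours differ only in the ``$z'$-vs-$\zeta$'' pair of variables: in $\Gamma_0$ one ties $\zeta$ to $z'$ by a large factor $R$, in $\Gamma_1$ one ties $z'$ to $\zeta$ by the small factor $R^{-1}$. I therefore take $\widetilde\Gamma_t(s,z)$ to be the family in which $(y,\eta)$ ranges over $\gamma(s,z)$ (as in $\Gamma_1$; note that for $z$ in the relevant region this is a uniformly good piece by Lemma \ref{leboncontour}), $\zeta$ ranges over a small real-codimension-$n$ disk around $\eta$, and $z'$ is linked to $(z,\zeta,\eta)$ by $z'-z=-i\big(tR^{-1}+(1-t)\rho_t\big)(\overline{\zeta-\eta})$ for a suitable interpolation of the two linking rules, so that $\widetilde\Gamma_1\subset\Gamma_1$ and a suitable restriction $\widetilde\Gamma_0$ sits inside $\Gamma_0$.

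Next I would verify the uniform good-contour property. Writing $a:=\zeta-\eta$, $b:=z-z'$, $\alpha:=\eta+\Im z$, and $c:=y-z-\nabla_\eta\widetilde W(s,-\Im z)$, a Taylor expansion of $\widetilde W$ at $-\Im z$ together with the estimate $\partial_\xi^\alpha W(s,\xi)={\cal O}(\la s\ra)$ from (\ref{estHessW}) gives, along the $(y,\eta)$-part, the same ``$-\tfrac12|\Im c|^2-\la s\ra^{-2}|\ldots|^2+(\text{lower order})$'' structure already obtained in Lemma \ref{leboncontour}, so the $\gamma(s,z)$-direction contributes uniform Gaussian decay after choosing $r$ small. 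The remaining contribution from the $(z',\zeta)$-directions is $-\Im\big((z-z')\zeta+(z'-y)\eta\big)$ rewritten as $\Re(b\cdot a)$ up to terms involving $c$ and $\alpha$; along $\widetilde\Gamma_t$ the linking relation makes $\Re(b\cdot a)$ equal to $\big(tR^{-1}+(1-t)\rho_t\big)|a|^2$ up to cross terms, hence strictly negative (after absorbing the cross terms by Cauchy–Schwarz and smallness of $r,r'$), uniformly in $t\in[0,1]$ and in $s\geq0$, since the factor $R$ enters $\Gamma_0$ with a favorable sign. Thus $\widetilde\Gamma_t(s,z)$ is a good contour for the phase $\Phi_0(y)-\Im((z-z')\zeta+(z'-y)\eta+\widetilde W(s,\eta))$ uniformly in $s$, and the strict inequality on $\Gamma_j(s,z)\backslash\widetilde\Gamma_j(s,z)$ follows because on that set at least one of $|a|$, $|b|$, $|z-z'|$ or the boundary parameter of $\gamma$ is bounded below, forcing the exponent strictly below $\Phi_0(z)$.

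For the localization claims: along $\widetilde\Gamma_t(s,z)$ one has $|\eta+\Im z|\leq Cr$ and $|\zeta-\eta|\leq C(r+r')$, so for $z\in\Omega_s(z_+(x_0,\xi_0),\varepsilon_2)$ with $\varepsilon_2$ small, $\eta$ and $\zeta$ both lie in an arbitrarily small neighborhood of $\xi_+(x_0,\xi_0)$; and from $y=z+\nabla_\eta\widetilde W(s,\eta)+{\cal O}(\la s\ra r)$ together with $\nabla_\eta\widetilde W(s,-\Im z)=\partial_\zeta\widetilde W(s,-\Im z_+)+{\cal O}(\la s\ra\varepsilon_2)$ (using (\ref{estHessW}) again) and the definition (\ref{defZs}) of $Z_s$, one gets $\la s\ra^{-1}|\Re(y-Z_s)|+|\Im(y-Z_s)|<\varepsilon_1$ provided $r$ and $\varepsilon_2$ are chosen small relative to $\varepsilon_1$, i.e. $y\in\Omega_s(Z_s,\varepsilon_1)$. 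The substitution of $\Gamma_1(s,z)$ for $\Gamma_0(s,z)$ in (\ref{dW0G0}), up to an exponentially small term in $H_{\Phi_0}(\Omega_s(z_+(x_0,\xi_0),\varepsilon_2))$, then follows from the standard Stokes/Cauchy argument (as in \cite{Sj}), now legitimate because the deformation stays in the domain of holomorphy of the integrand — here one uses that $y+\widetilde W_1$-type arguments stay in $\C_\nu$ by the lower bound $|\partial_\zeta\widetilde W(s,\xi_+)|\geq s/C-C$ and reality of $\partial_\zeta\widetilde W$, and that the growing size $\la s\ra$ of the contour is compensated by the uniform-in-$s$ Gaussian gain.

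The main obstacle I expect is precisely this last point: the contours grow in the $\Re$-directions like $\la s\ra$, so the elementary microlocal fact that a homotopy of good contours leaves the integral unchanged up to ${\cal O}(e^{-1/Ch})$ is not automatic and must be justified quantitatively — one must show the homotopy can be carried out while keeping the integrand holomorphic and the exponent uniformly below $\Phi_0(z)-1/C$ on the ``lateral boundary'' swept out by the homotopy, with constants independent of $s\in[0,T/h]$. This is the recurring difficulty flagged in the introduction, and it is handled by the same rescaled estimates (Gaussian decay in the rescaled variables $\la s\ra^{-1}\Re(\cdot)$, $\Im(\cdot)$) that underlie Lemma \ref{leboncontour}.
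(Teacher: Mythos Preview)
Your overall strategy --- Taylor-expand the phase at the common critical point, exhibit a quadratic form with uniform negative definiteness in suitably rescaled variables, and absorb cubic errors by taking $r$ small --- is exactly the paper's. But the specific interpolation you write down does not connect $\Gamma_0(s,z)$ to $\Gamma_1(s,z)$.

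The problem is twofold. First, you fix $(y,\eta)\in\gamma(s,z)$ for \emph{all} $t\in[0,1]$, whereas $\Gamma_0(s,z)$ requires $(y,\eta)\in\gamma(s,z')$, and $z'\neq z$ on most of the contour. The difference between $\gamma(s,z)$ and $\gamma(s,z')$ involves both the shift $\Re(z-z')$ and, more delicately, the change $\partial_\zeta\widetilde W(s,-\Im z')-\partial_\zeta\widetilde W(s,-\Im z)$ and $A_s(z')-A_s(z)$; these are ${\cal O}(\la s\ra|\Im(z-z')|)$ terms that do not vanish and must be carried through the interpolation. Second, your linking rule $z'-z=-i\big(tR^{-1}+(1-t)\rho_t\big)\overline{(\zeta-\eta)}$ is always of the $\Gamma_1$-type (it ties $z'$ to $\zeta-\eta$), and at $t=0$ it does not reproduce the $\Gamma_0$-relation $\zeta=-\Im z+iR(\overline{z-z'})$, which ties $\zeta$ to $z'$ and does not involve $\eta$. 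In particular, with your parametrization $\zeta$ is still a free variable at $t=0$, whereas on $\Gamma_0$ it is determined by $z'$. So your $\widetilde\Gamma_0$ is simply not a subset of $\Gamma_0$.

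The paper resolves this by first passing to rescaled coordinates
\[
\tilde\eta=\Re\eta+\Im z+i\la s\ra\Im\eta,\quad
\tilde y=\la s\ra^{-1}\bigl(\Re(y-z)-\nabla_\zeta\widetilde W(s,-\Im z)\bigr)+i\Im(y-z),\quad
\tilde\zeta=\zeta+\Im z,\quad
\tilde z=z'-z,
\]
in which the phase becomes a uniformly bounded, uniformly nondegenerate quadratic form $Q_s(\tilde\eta,\tilde\zeta,\tilde y,\tilde z)$ plus ${\cal O}(|\tilde\eta|^3)$. Both contours are then written with $(\tilde y,\tilde z)$ as the free parameters and $(\tilde\eta,\tilde\zeta)$ as explicit functions of them; crucially, in $\Gamma_0$ one finds
\[
\tilde\eta=-i\overline{\tilde y}-i\bigl(\widetilde A_s(z)\Im\tilde y+a_s(z,\tilde z)\bigr),\qquad
a_s(z,\tilde z):=\la s\ra^{-1}\Re(z-z')+\la s\ra^{-1}\bigl(\partial_\zeta\widetilde W(s,-\Im z)-\partial_\zeta\widetilde W(s,-\Im z')\bigr),
\]
and it is precisely the extra $a_s(z,\tilde z)$ that encodes the dependence of $\gamma(s,z')$ on $z'$. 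The deformation is then the honest linear interpolation of the two pairs of defining functions, carrying a factor $(1-t)a_s(z,\tilde z)$ in $\tilde\eta$ and a factor $t$ in front of the $\Gamma_1$-specific terms in $\tilde\zeta$. With this, a direct computation of $Q_s$ along $\widetilde\Gamma_t$ gives
\[
Q_s\le -\tfrac12(\Im\tilde y)^2-(\Re\tilde y+\widetilde A_s\Im\tilde y)^2-R|\tilde z|^2+C|\tilde z|\bigl(|\tilde z|+|\Im\tilde y|+|\Re\tilde y+\widetilde A_s\Im\tilde y|\bigr),
\]
and choosing $R$ large (this is where the hypothesis ``$R\ge1$ large enough'' enters) absorbs the cross terms uniformly in $s$ and $t$. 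Your localization and Stokes arguments in the last paragraph are fine once the interpolation is set up correctly.
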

\begin{proof} First of all, using the fact that $\widetilde W$ is real on the real, by a Taylor expansion we obtain,
\begin{eqnarray*}
\Im \widetilde W (s,\eta )&=&\Im\eta\cdot \nabla_\zeta\widetilde W(s,\Re\eta) + {\cal O}(\la s\ra |\Im\eta|^3)\\
&=& \Im\eta\cdot \nabla_\zeta\widetilde W(s,-\Im z) + \Im\eta \cdot A_s(z)(\Re\eta +\Im z)\\
&&\hskip 2cm + {\cal O}(\la s\ra |\Im\eta|^3+\la s\ra |\Im\eta|\, |\Re\eta +\Im z|^2).
\end{eqnarray*}
Then, inserting this expression into the phase 
$$
\Phi_1:= \Phi_0(y)-\Phi_0(z)-\Im ((z-z')\zeta +(z'-y)\eta  +\widetilde W(s,\eta)),
$$
and setting
\begin{eqnarray}
\tilde\eta&=&  \Re\eta+\Im z+i\la s\ra\Im\eta;\nonumber\\
\tilde y&=& \la s\ra^{-1}\big(\Re(y-z)-\nabla_\zeta\widetilde W(s,-\Im z)\big) +i\Im (y-z);\nonumber\\
\label{chvar} {}\\
 \tilde \zeta&=& \zeta +\Im z;\nonumber\\
\tilde z&=&z'-z\nonumber,
\end{eqnarray}
we easily obtain,
\begin{eqnarray}
\Phi_1 &=& Q_s(\tilde\eta,  \tilde\zeta, \tilde y, \tilde z)+{\cal O}(\la s\ra^{-2} |\Im\tilde\eta|^3+ |\Im\tilde\eta|\, |\Re\tilde\eta |^2)\nonumber\\
\label{apphasequadrat}
&=& Q_s(\tilde\eta,  \tilde\zeta, \tilde y, \tilde z)+{\cal O}( |\tilde\eta|^3),
\end{eqnarray}
where $Q_s$ is a  uniformly non-degenerate real-quadratic form on $\co^{4n}$, with uniformly bounded coefficients,  such that, along the two contours $\Gamma_0(s,z)$ and $\Gamma_1(s,z)$, one has,
$$
Q_s(\tilde\eta,  \tilde\zeta, \tilde y, \tilde z)\leq   -\delta \big( |\tilde\eta|^2 + |\tilde\zeta|^2 \big),
$$
for some positive constant $\delta$. Actually, we find,
\begin{eqnarray*}
Q_s(\tilde\eta,  \tilde\zeta, \tilde y, \tilde z)&=&\frac12 (\Im \tilde y)^2 +\Im (\tilde z\,\tilde\zeta) +\Im (\tilde y\,\tilde\eta)-\Re\tilde\eta\,\Im\tilde z\\
&&-\frac1{\la s\ra}\Re\tilde z\,\Im\tilde\eta -\Im\tilde\eta\, \cdot \widetilde A_s(z)\Re\tilde\eta,
\end{eqnarray*}
where we have set,
$$
\widetilde A_s(z):=\la s\ra^{-1} A_s(z)={\cal O}(1).
$$

\bigskip
On the other hand, 
we see that the contour $\Gamma_0(s,z)$ is given by,
$$
\Gamma_0(s,z) :
\left\{
\begin{array}{l}
\tilde\eta = -i\overline{\tilde y}-i(\widetilde A_s(z)\Im\tilde y+a_s(z,\tilde z));\\
\tilde\zeta =-iR\overline{\tilde z}\,;\quad |\Re\tilde y+a_s(z,\tilde z)|+|\Im\tilde y|<r\, ;\, |\tilde z|<r,
\end{array}
\right.
$$
with,
\begin{eqnarray*}
a_s(z,\tilde z) &:=& \la s\ra^{-1}\big(\Re (z-z')+\partial_\zeta \widetilde W (s, -\Im z) -\partial_\zeta \widetilde W (s, -\Im z')\big)\\
 &{}=&-\la s\ra^{-1}\Re\tilde z +\widetilde A_s(z)\Im\tilde z +{\cal O}(|\Im\tilde z|^2),
\end{eqnarray*}
while $\Gamma_1(s,z)$ can be written as,
$$
\Gamma_1(s,z) :
\left\{
\begin{array}{l}
\tilde\eta = -i\overline{\tilde y}-i\widetilde A_s(z)\Im\tilde y;\\
\tilde\zeta =-iR\overline{\tilde z}+\Re\tilde\eta +i\la s\ra^{-1}\Im\tilde\eta;\\
|\Re\tilde y|+|\Im\tilde y|<r'\, ;\, |\tilde z|<r',
\end{array}
\right.
$$
that is,
$$
\Gamma_1(s,z) :
\left\{
\begin{array}{l}
\tilde\eta = -i\overline{\tilde y}-i\widetilde A_s(z)\Im\tilde y;\\
\tilde\zeta =-iR\overline{\tilde z}-\Im\tilde y-i\la s\ra^{-1}(\Re\tilde y+\widetilde A_s(z)\Im\tilde y);\\
|\Re\tilde y|+|\Im\tilde y|<r'\, ;\, |\tilde z|<r'.
\end{array}
\right.
$$
Then, for $t\in [0,1]$, we set,
$$
\widetilde\Gamma_t(s,z) :
\left\{
\begin{array}{l}
\tilde\eta = -i\overline{\tilde y}-i(\widetilde A_s(z)\Im\tilde y + (1-t)a_s(z,\tilde z));\\
\tilde\zeta =-iR\overline{\tilde z}-t\Im\tilde y-it\la s\ra^{-1}(\Re\tilde y+\widetilde A_s(z)\Im\tilde y);\\
|\tilde y|+ |\tilde z|<r'',
\end{array}
\right.
$$
where $r''>0$ is taken sufficiently small in order to have,
\begin{eqnarray*}
\{ |\tilde y|+ |\tilde z|<r''\} \subset \big(\{ |\Re\tilde y+a_s(z,\tilde z)|+|\Im\tilde y|<r\, ;\, |\tilde z|<r,\}\\
\cap \{|\Re\tilde y|+|\Im\tilde y|<r'\, ;\, |\tilde z|<r'\}\big).
\end{eqnarray*}
A straightforward computations shows that, along $\widetilde\Gamma_t(s,z)$, one has,
\begin{eqnarray*}
Q_s(\tilde\eta,  \tilde\zeta, \tilde y, \tilde z)\leq -\frac12(\Im \tilde y)^2 -(\Re \tilde y +\widetilde A_s(z)\Im\tilde y)^2 -R|\tilde z|^2\\
 +C|z|\, (|z|+|\Im\tilde y| +| \Re \tilde y +\widetilde A_s(z)\Im\tilde y|),
\end{eqnarray*}
where the constant $C$ does not depend on the choice of $R$. As a consequence, by choosing $R$ sufficiently large, along $\widetilde\Gamma_t(s,z)$ we obtain,
$$
Q_s(\tilde\eta,  \tilde\zeta, \tilde y, \tilde z)\leq -\delta\big( |\tilde y|^2 +|\tilde z|^2\big),
$$
with some constant $\delta >0$. Since $|\tilde y| +|\tilde z|\sim |\tilde \eta|+|\tilde \zeta|$ on $\widetilde\Gamma_t(s,z)$ (in the sense that both quotients of these quantities are uniformly bounded), in view of 
(\ref{apphasequadrat})
this means that $\widetilde\Gamma_t(s,z)$ is a good contour for the phase $\Phi_1$.

\bigskip
Finally, along on $\widetilde\Gamma_t(s,z)$, all vectors $\tilde\eta,  \tilde\zeta, \tilde y, \tilde z$ remain in an arbitrarily small neighborhood of 0. As a consequence, we see on (\ref{chvar}) that both $\eta$ and $\zeta$ remain in an arbitrary small neighborhood of $\xi_+(x_0,\xi_0)$, while $y$ remains in $\Omega_s (Z_s,\varepsilon_1)$ ($\varepsilon_1>0$ arbitrary)  if $z\in \Omega_s(z_+(x_0,\xi_0),\varepsilon_2)$ with $\varepsilon_2>0$ is small enough.
\end{proof}

\begin{lem}\sl
\label{defcont2}
Let $\Gamma' (s,z)$ and $\Gamma_1' (s,z)$ be the two contours given in (\ref{contourGamma'})-(\ref{contourGamma'1}).\\
 Then, there exists a deformation of contours $[0,1]\ni t\mapsto \Gamma_t(s,z)$ with $\Gamma_0(s,z)\subset\Gamma'(s,z)$ and $\Gamma_1(s,z)\subset \Gamma_1'(s,z)$, such that, for all $t\in [0,1]$, $\Gamma_t(s,z)$ is a good contour for the phase  $(x,y,\zeta,\eta)\mapsto \Phi_0(x)-\Im ((y-x)\eta + (z-y)\zeta )$ uniformly with respect to $s$, and one has,
 \begin{eqnarray*}
&&  \inf_{s\geq 0}\,\, \inf_{\Gamma'(s,z)\backslash\Gamma_0(s,z)}\big[\Phi_0(x)-\Phi_0(z)-\Im ((y-x)\eta + (z-y)\zeta )\big]<0;\\
&&  \inf_{s\geq 0}\,\, \inf_{\Gamma_1'(s,z)\backslash\Gamma_1(s,z)}\big[\Phi_0(x)-\Phi_0(z)-\Im ((y-x)\eta + (z-y)\zeta )\big]<0
 \end{eqnarray*}
 Moreover, if $z\in \Omega_s(z_+(x_0,\xi_0),\varepsilon)$ with $\varepsilon>0$ small enough, then, along $\Gamma_t(s,z)$, the quantity $y+\widetilde W_1(s,\zeta, \eta)$ remains inside $\Sigma_\nu$. In particular, in the integral (\ref{conjugQtilde}) one can substitue $\Gamma_1' (s,z)$ to $\Gamma' (s,z)$, up to an exponentially small term in $H_{\Phi_0}(\Omega_s(z_+(x_0,\xi_0),\varepsilon)))$.
\end{lem}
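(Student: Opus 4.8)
The plan is to follow, essentially verbatim, the strategy already used for Lemma~\ref{defcont1}. Recall that after the substitution $y'=y-\widetilde W_1$ made just before (\ref{conjugQtilde}), the phase at hand is $\Phi_1:=\Phi_0(x)-\Phi_0(z)-\Im\bigpare{(y-x)\eta+(z-y)\zeta}$, with saddle point $x=y=z$, $\eta=\zeta=-\Im z$ and critical value $0$. First I would introduce shifted, anisotropically rescaled variables $(\tilde x,\tilde y,\tilde\eta,\tilde\zeta)$ centered at this point, in the spirit of (\ref{chvar}): $\tilde\eta$ and $\tilde\zeta$ obtained from $\eta+\Im z$ and $\zeta+\Im z$ by dilating the imaginary part by a factor $\la s\ra$, and $\tilde x,\tilde y$ obtained from $x-z$ and $y-z$ by contracting the real part by $\la s\ra^{-1}$. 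Because $\widetilde W$ no longer appears in $\Phi_1$ (it has been absorbed into the contours), a direct computation -- in which the $\Im z$ contributions cancel exactly -- should turn $\Phi_1$ into an exactly quadratic, uniformly nondegenerate real form $Q_s$ with uniformly bounded coefficients; this is actually a simplification over Lemma~\ref{defcont1}, where a cubic remainder had to be carried along. All surviving $s$-growth then lives only in the two contours.

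Next I would rewrite both contours in these variables. Near the saddle point, $\widetilde W_1(s,\zeta,\eta)=\partial_\zeta\widetilde W(s,\xi_+(x_0,\xi_0))+\la s\ra\,\theta$ with $|\theta|$ arbitrarily small, and $\partial_\zeta\widetilde W(s,\xi_+(x_0,\xi_0))$ is real and, by Lemma~\ref{estflow} and (\ref{lienWflot}), of size $\geq s/C-C$. Substituting the definitions (\ref{contourG0})--(\ref{contourG1}) of $\gamma$, $\gamma'$ into (\ref{contourGamma'}) and Taylor-expanding $\widetilde W_1$ and $\widetilde W$, and using the device $\widetilde A_s(z):=\la s\ra^{-1}\,{\rm Hess}_\xi W(s,-\Im z)={\cal O}(1)$ already employed in Lemma~\ref{defcont1}, I expect $\Gamma'(s,z)$ and $\Gamma_1'(s,z)$ to take, in these variables, structural shapes entirely analogous to the two contours handled in Lemma~\ref{defcont1}: defining relations expressing $\tilde\eta$, $\tilde\zeta$ as $-i\overline{(\cdot)}$ plus ${\cal O}(1)$ corrections built from $\widetilde A_s$ and a bounded real shift, over a polydisc in $(\tilde x,\tilde y)$, with $\Gamma_1'$ being the ``clean'' one (no correction). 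The homotopy $\Gamma_t(s,z)$ is then the linear interpolation in $t\in[0,1]$ of the ${\cal O}(1)$ correction terms distinguishing the two, restricted to a slightly smaller polydisc $\{|\tilde x|+|\tilde y|<r''\}$ contained in both, so that $\Gamma_0(s,z)\subset\Gamma'(s,z)$ and $\Gamma_1(s,z)\subset\Gamma_1'(s,z)$. Plugging into $Q_s$ and completing squares -- using $|\widetilde A_s|={\cal O}(1)$ and $r''$ small -- should give $Q_s\leq-\delta(|\tilde x|^2+|\tilde y|^2)$ uniformly in $s$ and $t$; since $|\tilde\eta|\sim|\tilde x|$ and $|\tilde\zeta|={\cal O}(|\tilde x|+|\tilde y|)$ along $\Gamma_t$, this makes $\Gamma_t(s,z)$ a good contour for $\Phi_1$ uniformly in $s$, and the two infimum inequalities in the statement are read off on the boundary exactly as in Lemma~\ref{defcont1}. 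Note that, unlike in Lemma~\ref{defcont1}, no large parameter $R$ is available to dominate error terms -- but none is needed, precisely because $\Phi_1$ is exactly quadratic.

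Finally, along $\Gamma_t$ all of $\tilde x,\tilde y,\tilde\eta,\tilde\zeta$ are small, so $\eta$ and $\zeta$ stay in an arbitrarily small neighbourhood of $\xi_+(x_0,\xi_0)$, while $y+\widetilde W_1(s,\zeta,\eta)$ is close, in the $\Omega_s$ sense, to $z+\partial_\zeta\widetilde W(s,-\Im z)$; hence its imaginary part stays bounded, and since $\partial_\zeta\widetilde W(s,\xi_+(x_0,\xi_0))$ is real with $|\partial_\zeta\widetilde W(s,\xi_+(x_0,\xi_0))|\geq s/C-C$ its real part either grows with $s$ or stays near $\Re z_+(x_0,\xi_0)$ -- in either case $y+\widetilde W_1\in\Sigma_\nu$, so that $\tilde q(y+\widetilde W_1,\eta;h)$ is a legitimate symbol along the whole homotopy. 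Cauchy's theorem together with the strict boundary negativity then permits the replacement of $\Gamma'(s,z)$ by $\Gamma_1'(s,z)$ in (\ref{conjugQtilde}) up to a term that is ${\cal O}(e^{-1/Ch})$ in $H_{\Phi_0}(\Omega_s(z_+(x_0,\xi_0),\varepsilon))$, which is the assertion. The main obstacle is the one flagged in the Introduction and in Lemma~\ref{defcont1}: uniformity in $s$. The base-point shift $\partial_\zeta\widetilde W(s,\cdot)$ and the Hessian ${\rm Hess}_\xi W(s,\cdot)$ both grow like $\la s\ra$, so the naive deformation degenerates as $s\to\infty$; the real work is in choosing the anisotropic rescaling of the first step so that every relevant quantity becomes ${\cal O}(1)$ and the model estimate of Lemma~\ref{defcont1} applies with $s$-independent constants. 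Everything else is a transcription of that lemma.
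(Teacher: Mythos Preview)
Your proposal has a genuine gap: the two contours live at incompatible scales, and a single anisotropic rescaling cannot tame both simultaneously. The contour $\Gamma'(s,z)$, built from $\gamma(s,z)$ and $\gamma'(s,y+\widetilde W_1)$ via (\ref{contourGamma'}), has real-part extent of order $\la s\ra$ in $x$ and $y$; this is precisely what your rescaling $\Re\tilde x=\la s\ra^{-1}\Re(x-z)$, $\Re\tilde y=\la s\ra^{-1}\Re(y-z)$ is designed to normalize. But $\Gamma_1'(s,z)$, given by (\ref{contourGamma'1}), has $|z-x|<r$ and $|z-y|<r$ with $r$ fixed independently of $s$. After your rescaling, $\Gamma_1'$ is confined to $|\Re\tilde x|<r\la s\ra^{-1}$, $|\Im\tilde x|<r$ (and similarly in $\tilde y$), so no polydisc $\{|\tilde x|+|\tilde y|<r''\}$ with $r''>0$ independent of $s$ can be contained in it, and the claimed inclusion $\Gamma_1(s,z)\subset\Gamma_1'(s,z)$ fails. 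Equivalently, in your variables the defining relation for $\Gamma_1'$ reads $\tilde\eta=-\Im\tilde x-i\la s\ra^{2}\Re\tilde x$ (and similarly for $\tilde\zeta$), which is not an ${\cal O}(1)$ correction at all; the phrase ``$\Gamma_1'$ being the clean one'' is therefore incorrect. Conversely, if you work at the fixed scale of $\Gamma_1'$, the boundary infimum on $\Gamma'(s,z)\setminus\Gamma_0(s,z)$ degenerates like $\la s\ra^{-2}$ along the purely-real directions, destroying the uniformity in $s$ that the lemma asserts. So a linear interpolation in any fixed coordinate system is doomed.

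The paper resolves this by interpolating not only the defining relations but the scale itself. It introduces a $t$-dependent dilation factor $\rho_t(s)=\sqrt{(1-t)\la s\ra^{-2}+t}$, which equals $\la s\ra^{-1}$ at $t=0$ and $1$ at $t=1$, and takes the deformation domain to be $B_t(s,z)=\{\rho_t(s)^2|\Re(z-x)|^2+|\Im(z-x)|^2<r_0^2\}$ (and similarly in $y$). After rewriting $\Gamma'$ in graph form $\zeta=F_0$, $\eta=G_0$ via the implicit function theorem (this is where the Taylor expansion of $\widetilde W_1$ enters), the functions $F_t,G_t$ defining $\Gamma_t$ are built as specific combinations of $F_0,G_0$ and $F_1,G_1$ evaluated at correspondingly rescaled points $x_t^j,y_t^j$. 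The key computation is that $\Im((y-x)\eta+(z-y)\zeta)$ along $\Gamma_t$ decomposes \emph{exactly} as the $(1-t,t)$-weighted sum of the same expression along $\Gamma'$ and $\Gamma_1'$ at those rescaled points; the good-contour estimate then follows with $\delta=\min(\delta_1,\delta_2)$, uniformly in $s$ and $t$. Your observation that $\Phi_1$ is exactly quadratic is correct and does simplify the analysis, but it does not remove the need for this scale-bridging construction, which is the heart of the proof.
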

\begin{proof} 
We first observe that, along $\Gamma' (s,z)$, we have,
$$
|\Re \eta +\Im z| +|\Re\zeta+\Im z |\leq 2r\quad ;\quad |\Im \eta| +|\Im\zeta |\leq C_0\la s\ra^{-1}r,
$$
with a constant $C_0>0$ independent of the choice of $r$. Moreover, writing,
$$
\widetilde W_1(s,\zeta,\eta )=\int_0^1 \partial_\zeta \widetilde W(t\zeta + (1-t)\eta )dt,
$$
and using the fact that $\widetilde W$ is real on the real, we easily deduce,
\be
\label{ImW1}
\Im \widetilde W_1(s,\zeta ,\eta)=\frac12 A_s(z)(\beta_1+\beta_2 )+{\cal O}(|\alpha|\, |\beta| +\la s\ra^{-2}|\beta|^3),
\ee
where we have set,
\begin{eqnarray*}
&&\alpha =(\alpha_1,\alpha_2):= (\Re \zeta +\Im z,\Re\eta +\Im z);\\
&& \beta = (\beta_1,\beta_2) := (\la s\ra\Im\zeta ,\la s\ra\Im \eta ).
\end{eqnarray*}
Similarly, we have,
\be
\label{ReW1}
\Re \widetilde W_1(s,\zeta ,\eta)=\partial_\zeta \widetilde W(s,-\Im z) +\frac12 A_s(z)(\alpha_1+\alpha_2 )+{\cal O}(\la s\ra|\alpha|^2+\la s\ra^{-1}|\beta|^2).
\ee
Substituting these expressions into the equations defining  $\Gamma' (s,z)$, and setting $\widetilde A_s=\la s\ra^{-1}A_s(z)$, we find,
\begin{eqnarray*}
&& \alpha_1 +\frac12 \widetilde A_s(\beta_1+\beta_2)= \Im (z-y)+{\cal O}(|\alpha|\, |\beta| +|\beta|^3);\\
&& \beta_1 + \frac12 \widetilde A_s(\alpha_1+\alpha_2)+\frac12  \widetilde A_s^2(\beta_1+\beta_2 )\\
 &&\hskip 3cm=\la s\ra^{-1}\Re (z-y)+ \widetilde A_s\,\Im (z-y)+{\cal O}(|\alpha|^2+|\beta|^2);\\
 &&\alpha_2 = \Im (z-x);\\
&&\beta_2 - \frac12  \widetilde A_s(\alpha_1+\alpha_2 )+\frac12  \widetilde  A_s^2(\beta_1+\beta_2 )\\
&&\hskip 3cm= \la s\ra^{-1}\Re (y-x)- \widetilde A_s\,\Im (y-x)+{\cal O}(|\alpha|^2+|\beta|^2),
\end{eqnarray*}
where similar estimates for the derivatives  can also be obtained.

\bigskip
In particular, since $|\alpha| +|\beta|\leq (2+C_0)r \ll 1$, $ \widetilde A_s={\cal O}(1)$, and $ \widetilde A_s^2\geq 0$, we easily see that  the implicit function theorem can be applied to the variables $(\alpha, \beta)$, and  permits us to re-write  the equations defining $\Gamma' (s,z)$ as,
$$
\zeta = F_0(s, z,x,y) \quad  ;\quad \eta = G_0(s,z,x,y),
$$
where $F_0$ and $G_0$ depend smoothly  on $(z,x,y)$ (actually, analytically on $\Re(z,x,y)$ and $\Im (z,x,y)$) in the domain of integration, and verify there,
\begin{eqnarray}
&&\Phi_0(x)-\Phi_0(z)-\Im ((y-x)G_0 + (z-y)F_0 )\nonumber\\
\label{boncontN0}
&& \hskip 2cm \leq -\delta_1\la s\ra^{-2}\big[ |\Re (z-y)|^2+|\Re (z-x)|^2\big]\\
&& \hskip 5cm  -\delta_1 \big[|\Im (z-y)|^2+|\Im (z-x)|^2\big],\nonumber
\end{eqnarray}
for some constant $\delta_1 >0$.
(Here, we have also used the fact that, on $\Gamma' (s,z)$, the size of $(\alpha, \beta)$ is of the same order as that of $(\Im (z-y), \Im (z-x), \la s\ra^{-1}\Re (z-y), \la s\ra^{-1}\Re (z-x))$.)

Moreover, if $r_0>0$ is a small enough constant, the set,
\begin{eqnarray*}
\Gamma_0(s,z):=\big\{ (x,z,\zeta,\eta)\, ;\, \zeta = F_0(s, z,x,y),\,  \eta = G_0(s,z,x,y),\, \\
\la s\ra^{-2}|\Re (z-y)|^2+|\Im (z-y)|^2<r_0^2,\\
 \la s\ra^{-2}|\Re (z-x)|^2+|\Im (z-x)|^2<r_0^2\big\}
\end{eqnarray*}
is included in $\Gamma' (s,z)$ for all $s\geq 0$, and it verifies,
 $$
 \inf_{s\geq 0}\,\, \inf_{\Gamma'(s,z)\backslash\Gamma_0(s,z)}\big[\Phi_0(x)-\Im ((y-x)\eta + (z-y)\zeta )\big]<0.
 $$

\bigskip
On the other hand, the contour $\Gamma_1'(s,z)$ can obviously be written as,
$$
\zeta = F_1(s, z,x,y)\, ;\,  \eta = G_1(s,z,x,y)\, ;\, |z-x|<r_1\, ;\, |z-y|<r,
$$
with,
\begin{eqnarray}
\label{boncontN1}
&&\Phi_0(x)-\Phi_0(z)-\Im ((y-x)G_1 + (z-y)F_1 )\\
&& \hskip 6cm  \leq -\delta_2 \big[|z-y|^2+|z-x|^2\big],\nonumber
\end{eqnarray}
for some constant $\delta_2 >0$, and, possibly by shrinking  $r_0$, we can can also assume that $r_0\leq r$. 

\bigskip
Then, for $t\in[0,1]$, we set,
\begin{eqnarray*}
&& \rho_t (s):=\sqrt{ (1-t)\la s\ra^{-2} + t};\\
&& B_t(s,z):=\{x\in\co^n\,;\, \rho_t(s)^2|\Re (z-x)|^2 +|\Im (z-x)|^2<r_0^2\},
\end{eqnarray*}
and, for $j=0,1$, we define the vector-valued functions $x_t^j(s,z,x,)$ and $y_t^j(s,z,y)$ by the formulas,
\begin{eqnarray*}
\Re (x_t^j-z)=\la s\ra^{1-j}\rho_t(s)\Re (x-z)&;& \Im x_t^j=\Im x;\\
\Re (y_t^j-z)=\la s\ra^{1-j}\rho_t(s)\Re (y-z)&;& \Im y_t^j=\Im y.
\end{eqnarray*}
In particular, we see that if $x,y\in B_t(s,z)$, then $x_t^j,y_t^j \in B_j(t,z)$, and moreover $x_j^j=x$, $y_j^j=y$.
Therefore, we can consider the contour $\Gamma_t(s,z)$ given by,
$$
\Gamma_t(s,z)\,:\, \zeta = F_t(s, z,x,y)\, ;\,  \eta = G_t(s,z,x,y)\, ;\, x,y\in B_t(s,z),
$$
where $F_t$ and $G_t$ are defined by,
\begin{eqnarray*}
 \Re F_t(s,z,x,y)&=&(1-t)\Re F_0(s,z,x_t^0,y_t^0)+t\Re F_1(s,z,x_t^1,y_t^1);\\
 \Im F_t(s,z,x,y)&=&(1-t)\la s\ra\rho_t(s)\Im F_0(s,z,x_t^0,y_t^0)+t\Im F_1(s,z,x_t^1,y_t^1);\\
 \Re G_t(s,z,x,y)&=&(1-t) \Re G_0(s,z,x_t^0,y_t^0)+t \Re G_1(s,z,x_t^1,y_t^1);\\
 \Im G_t(s,z,x,y)&=&(1-t)\la s\ra\rho_t(s)\Im G_0(s,z,x_t^0,y_t^0)+t\Im G_1(s,z,x_t^1,y_t^1).
\end{eqnarray*}
(Note that the notations remain consistent when $t=0$ or $t=1$.) Then,  $t\mapsto \Gamma_t(s,z)$ is a continuous deformation between $\Gamma_0(s,z)$ and $\Gamma_1(s,z)$, and, along $\Gamma_t(s,z)$, a straightforward computation gives,
\begin{eqnarray*}
&&\Im ((y-x)\eta + (z-y)\zeta )\\
&& \hskip 2cm = (1-t)\Im ((y_t^0-x_t^0)G_0(x_t^0,y_t^0)+(z-y_t^0)F_0(x_t^0,y_t^0))\\
&& \hskip 3cm +t\Im ((y_t^1-x_t^1)G_1(x_t^1,y_t^1)+(z-y_t^1)F_1(x_t^1,y_t^1)),
\end{eqnarray*}
and thus, using (\ref{boncontN0}) and (\ref{boncontN1}), and the fact that $\Phi_0(x)=\Phi_0(x_t^0)=\Phi_0(x_t^1)$, we obtain,
\begin{eqnarray*}
&&\Phi_0(x) -\Phi_0(z) -\Im ((y-x)\eta + (z-y)\zeta )\\
&& \hskip 1.5cm \leq -\delta_1(1-t)\rho_t^2(s)\big[ |\Re (z-y)|^2+|\Re (z-x)|^2\big]\\
&& \hskip 4cm  -\delta_1 (1-t)\big[|\Im (z-y)|^2+|\Im (z-x)|^2\big]\\
&& \hskip 2cm  -\delta_2t\rho_t^2(s)\big[ |\Re (z-y)|^2+|\Re (z-x)|^2\big]\\
&& \hskip 4cm  -\delta_2 t\big[|\Im (z-y)|^2+|\Im (z-x)|^2\big].
\end{eqnarray*}
In particular, setting $\delta=\min (\delta_1,\delta_2)$, this gives,
\begin{eqnarray*}
&&\Phi_0(x) -\Phi_0(z) -\Im ((y-x)\eta + (z-y)\zeta )\\
&& \hskip 1.5cm \leq -\delta \rho_t^2(s)\big[ |\Re (z-y)|^2+|\Re (z-x)|^2\big]\\
&& \hskip 4cm  -\delta \big[|\Im (z-y)|^2+|\Im (z-x)|^2\big],
\end{eqnarray*}
and we also obtain that $\Phi_0(x) -\Phi_0(z) -\Im ((y-x)\eta + (z-y)\zeta )\leq -\delta r_0^2$ on the boundary of $\Gamma_t(s,z)$. As a consequence, $\Gamma_t(s,z)$ is a good contour for the phase $\Phi_0(x)-\Im ((y-x)\eta + (z-y)\zeta )$ uniformly with respect to $s$, and it remains to prove that,  if $r_0$ has been chosen small enough, and if $z\in \Omega_s(z_+(x_0,\xi_0),\varepsilon)$ with $\varepsilon>0$ small enough, then, for all $t\in[0,1]$, along $\Gamma_t(s,z)$ one has $y+\widetilde W_1(s,\zeta, \eta)\in \Sigma_\nu$.

\bigskip
We first observe  that, by definition, the functions $F_j$ and $G_j$ ($j=0,1$) verify,
\begin{eqnarray*}
|\Re F_j+\Im z| +|\Re G_j+\Im z| \leq C\rho_j(s)\big[ |\Re (z-y)|+|\Re (z-x)| \big] \\
+|\Im (z-y)|+|\Im (z-x)|,
\end{eqnarray*}
for some uniform constant $C>0$. Then, for any $t\in[0,1]$, we deduce,
\begin{eqnarray*}
|\Re F_t+\Im z| +|\Re G_t+\Im z| \leq C\rho_t(s)\big[ |\Re (z-y)|+|\Re (z-x)| \big] \\
+C\big[|\Im (z-y)|+|\Im (z-x)|\big],
\end{eqnarray*}
and thus, on $\Gamma_t(s,z)$,
$$
|\Re \zeta +\Im z| + |\Re \eta +\Im z| \leq 4Cr_0.
$$
Similarly, we find,
\begin{eqnarray*}
|\Im F_t| +|\Im G_t| \leq C((1-t)\rho_t(s) +t)\rho_t(s)\big[ |\Re (z-y)|+|\Re (z-x)| \big] \\
+C((1-t)\rho_t(s) +t)\big[|\Im (z-y)|+|\Im (z-x)|\big],
\end{eqnarray*}
and thus, on $\Gamma_t(s,z)$,
$$
|\Im \zeta | + |\Im \eta | \leq 4Cr_0.
$$
Therefore, using (\ref{ImW1})-(\ref{ReW1}), we deduce that, along $\Gamma_t(s,z)$, we have,
$$
y+\widetilde W_1(s,\zeta, \eta) =z + \partial_\zeta \widetilde W(s,-\Im z) + Y,
$$
where $z, Y\in \co^n$ verifies,
$$
|Y|\leq C_1 r_0\la s\ra,\quad |z-z_+(x_0,\xi_0)|\leq C_1\varepsilon\la s\ra,
$$
with a constant $C_1>0$ independent of $r_0, \varepsilon$. In particular, using (\ref{lienWflot}) and Lemma \ref{estflow}, we conclude that $y+\widetilde W_1(s,\zeta, \eta)\in \Sigma_\nu$ as long as $r_0$ and $\varepsilon$ are taken sufficiently small.
\end{proof}

\section{Derivatives on non-local $H_{\Phi_0}$-spaces}\label{appB}

With ${\rm Op}_R$ defined as in (\ref{quantizPDO}) and $Z(s)$ as in Proposition \ref{estevol}, we have,
\begin{lem}\sl
\label{derivHphi}
For all $\alpha\in\na^n$, $\varepsilon>0$, and $R\geq 1$ large enough, there exists $C>0$, such that,
$$
\Vert (hD_z)^{\alpha}v - {\rm Op}_R(\zeta^\alpha )v\Vert_{L^2_{\Phi_0}(\Omega_s(Z(s), \varepsilon))}\leq C\la s\ra^{n} e^{-1/Ch}\Vert v\Vert_{L^2_{\Phi_0}(\Omega_s(Z(s), \varepsilon+4R^{-1/2}))},
$$
uniformly with respect to $s\geq 0$, $v\in H_{\Phi_0}(\Omega_s(Z(s), \varepsilon_1))$,  and $h>0$ small enough.
\end{lem}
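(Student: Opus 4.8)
The plan is to compare $(hD_z)^\alpha v$ with ${\rm Op}_R(\zeta^\alpha)v$ by means of Sj\"ostrand's reproducing formula, taking advantage of the fact that the contour $\gamma_R(z)$ has the fixed small size $R^{-1/2}$, so that the phase estimates involved are uniform with respect to $s$; the factor $\la s\ra^n$ will then enter only through the Lebesgue measure of $\Omega_s(Z(s),\varepsilon)$, which is $O(\la s\ra^n)$. First I would recall from \cite{Sj} (see also the appendix of \cite{MNS2}) that ${\rm Op}_R(1)v\sim v$ in $H_{\Phi_0}$ near each point of $\Omega_s(Z(s),\varepsilon+3R^{-1/2})$, say $v={\rm Op}_R(1)v+g_0$ with $g_0$ neglectible there, and I would check that this holds with neglectibility controlled uniformly in $s$: parametrising $\gamma_R(z)$ by $w:=z-y$, a direct computation gives, along $\gamma_R(z)$,
\[
\Phi_0(y)-\Phi_0(z)-\Im\bigl((z-y)\zeta\bigr)=-R\,|\Re w|^2-\bigl(R-\tfrac12\bigr)|\Im w|^2,
\]
so that this quantity is $\leq-(R-\tfrac12)/R=:-c_0<0$ on $\partial\gamma_R(z)$, with $c_0$ independent of $z$, of $s$ and of $Z(s)$ (whose imaginary part is anyway $O(1)$ by hypothesis). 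Hence the saddle-point argument of \cite{Sj} goes through with $s$-uniform bounds.

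Next I would differentiate. Since $e^{i(z-y)\zeta/h}v(y)\,dy\wedge d\zeta$ is a closed holomorphic $2n$-form in the integration variables $(y,\zeta)$, applying $hD_{z_j}$ to ${\rm Op}_R(1)v(z)$ produces the factor $\zeta_j$ coming from $e^{i(z-y)\zeta/h}$, plus, by Stokes' theorem, a term carried by $\partial\gamma_R(z)$ that accounts for the $z$-dependence of the contour (the infinitesimal deformation field of $z\mapsto\gamma_R(z)$ being smooth with bounded derivatives). Iterating,
\[
(hD_z)^\alpha\bigl[{\rm Op}_R(1)v\bigr](z)={\rm Op}_R(\zeta^\alpha)v(z)+B_\alpha(z),
\]
where $B_\alpha(z)$ is a finite sum of integrals over $\partial\gamma_R(z)$ of bounded forms against $h^{-k}e^{i(z-y)\zeta/h}v(y)$ with $k\leq|\alpha|$. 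The phase bound above then gives
\[
e^{-\Phi_0(z)/h}\,|B_\alpha(z)|\leq C_\alpha\,h^{-|\alpha|}e^{-c_0/h}\sup_{|y-z|=R^{-1/2}}e^{-\Phi_0(y)/h}|v(y)|,
\]
uniformly for $s\geq0$ and $z\in\Omega_s(Z(s),\varepsilon)$. Combined with $v\sim{\rm Op}_R(1)v$, this yields, pointwise on $\Omega_s(Z(s),\varepsilon)$,
\[
(hD_z)^\alpha v={\rm Op}_R(\zeta^\alpha)v+B_\alpha+(hD_z)^\alpha g_0,
\]
where $(hD_z)^\alpha g_0$ is neglectible since $g_0$ is. (Alternatively, the same identity up to a neglectible remainder follows from the analytic stationary phase theorem, Taylor-expanding $v(z-w)$ inside the Gaussian integral obtained after carrying out the $\zeta$-integration in ${\rm Op}_R(\zeta^\alpha)v$.)

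Finally I would pass to the $L^2_{\Phi_0}$ estimate. Since $|v|^2$ is subharmonic and $\Phi_0$ is smooth with $\Im z$ bounded on $\Omega_s(Z(s),\varepsilon)$, a mean-value inequality over balls of radius $h$ bounds $\sup e^{-\Phi_0/h}|v|$ on $\Omega_s(Z(s),\varepsilon+3R^{-1/2})$ by $Ch^{-n}\|v\|_{L^2_{\Phi_0}(\Omega_s(Z(s),\varepsilon+4R^{-1/2}))}$, at the cost of an innocuous enlargement by $h<R^{-1/2}$ and a power of $h^{-1}$. As $\partial\gamma_R(z)\subset\Omega_s(Z(s),\varepsilon+2R^{-1/2})$ whenever $z\in\Omega_s(Z(s),\varepsilon)$, inserting this in the previous bound, squaring, integrating over $\Omega_s(Z(s),\varepsilon)$ (of measure $O(\la s\ra^n)$) and absorbing all powers of $h^{-1}$ into the exponential, I get the claimed inequality (in fact with $\la s\ra^{n/2}\le\la s\ra^n$). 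The only genuine difficulty is the $s$-uniformity, and it is mild here precisely because $\gamma_R(z)$ has the fixed size $R^{-1/2}$: in contrast with the $s$-growing contours of Sections~\ref{sec-modevol}--\ref{sec-proofmainth} and Appendix~\ref{appA}, no delicate deformation is needed, and the only $s$-dependent loss, the factor $\la s\ra^n$, comes solely from the volume of $\Omega_s(Z(s),\varepsilon)$.
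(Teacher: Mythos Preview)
Your argument is correct and follows essentially the same route as the paper: the paper also proves the case $\alpha=0$ first (making the analytic stationary phase explicit, using in particular that $\Delta\tilde w_z\equiv 0$ so that the expansion collapses to $v(z)$), bounds the sup of $v$ by its local $L^2_{\Phi_0}$-norm via a Cauchy estimate, and then reduces the general $\alpha$ to $\alpha=0$ by the same boundary-term observation you obtain via Stokes, namely ${\rm Op}_R(\zeta^\alpha)v=(hD_z)^\alpha{\rm Op}_R(1)v+{\cal O}(e^{-1/2h}\sup_{|y-z|\le R^{-1/2}}|v(y)|)$. The only cosmetic difference is that you cite Sj\"ostrand for the reproducing identity and emphasize the boundary phase bound, whereas the paper carries out the stationary phase by hand; the $\la s\ra^n$ factor arises in both cases solely from the volume of $\Omega_s(Z(s),\varepsilon)$, as you note.
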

\begin{proof} When $\alpha=0$, if we parametrize $\gamma_R(z)$ by $y\in\co^n$, we find,
$$
 {\rm Op}_R(1)v(z;h) =\left( \frac{R}{\pi h}\right)^n\int_{|y-z|<R^{-1/2}} e^{-R|y-z|^2 /h}w_z( y)d\Re y\,d\Im y,
$$
with
\be
\label{SP1}
w_z( y):= e^{-i (z-y)\Im z/h}v(y).
\ee
Setting
$$
x:=R^{1/2}\left(  \Re (y-z),\Im (y-z)\right),
$$
and $\lambda:=2/h$,
the previous integral becomes,
$$
(\pi h)^{-n}\int_B e^{-\lambda x^2/2}\tilde w_z(x)dx,
$$
with $B:=\{ x:=(x_1,x_2)\in\re^{2n}\, ;\,  |x|<1\}$, and
$$
\tilde w_z(x):=w_z ( z+R^{-1/2}(x_1+ix_2)).
$$
Then, we can apply the analytic stationary phase theorem, as stated in \cite{Sj} Theorem 2.1, and, observing that $\Delta \tilde w_z (x)\equiv 0$, we obtain,
$$
 {\rm Op}_R(1)v(z;h) =v(z;h) + R_N(h),
$$
where, for all $N\geq 1$ and $h>0$, $R_N(h)$ verifies,
$$
|R_N(h)|\leq C_n 2^{-n}h^{n+N}(N+1)^n N! \,  \sup_{\tilde x\in\widetilde B} |\tilde w_z(\tilde x)|,
$$
for some constant $C_n>0$, and with $\widetilde B:=\{ \tilde x = \mu x\,;\, x\in B\,, \, \mu\in\co\,,\, |\mu|<1\}\subset \{ \tilde x=(\tilde x_1,\tilde x_2)\in\co^{2n}\, ;\, |\tilde x_1+i\tilde x_2| <2\}$.  Then, taking $N=[1/C_1h]$ with $C_1>1$, we easily deduce,
\be
\label{SP2}
| {\rm Op}_R(1)v(z;h) -v(z;h) |\leq Ce^{-1/Ch} \sup_{\tilde x\in\widetilde B} |\tilde w_z(\tilde x)|,
\ee
for some constant $C>0$ independent of $R$. Now, by Cauchy estimates, for any $\tilde x\in\co^{n}$ with $|\tilde x|<2$, we see that (for some other constant $C>0$ independent of $R$),
\be
\label{SP3}
|v(z+R^{-1/2}\tilde x)|\leq CR^{n/4}e^{(CR^{-1/2}+\Phi_0(z+R^{-1/2}\tilde x))/h}\Vert v\Vert_{L^2_{\Phi_0}(B_z(4R^{-1/2}))},
\ee
where $B_z(4R^{-1/2}):=\{ z'\in\co^n\,;\, |z'-z| < 4R^{-1/2}\}$.
On the other hand, we have,
\be
\label{SP4}
|-\Phi_0(z)+iR^{-1/2}\tilde x\, \Im z +\Phi_0(z+R^{-1/2}\tilde x)|=R^{-1}(\Im \tilde x)^2\leq 4R^{-1}.
\ee
We deduce from (\ref{SP1})-(\ref{SP4}),
$$
| {\rm Op}_R(1)v(z;h) -v(z;h) |\leq CR^{n/4}e^{\Phi_0(z)/h+ CR^{-1/2}-1/Ch} \Vert v\Vert_{L^2_{\Phi_0}(B_z(4R^{-1/2}))},
$$
for some (new) constant $C>0$ independent of $R$. Therefore, taking $R$ sufficiently large, for all $z$ in $\Omega_s(Z(s), \varepsilon)$, we obtain,
$$
e^{-\Phi_0(z)/h}| {\rm Op}_R(1)v(z;h) -v(z;h) |\leq CR^{n/4}e^{-1/2Ch} \Vert v\Vert_{L^2_{\Phi_0}(\Omega_s(\varepsilon +4R^{-1/2}))}.
$$
Taking the square and integrating with respect to $z$ on $\Omega_s(Z(s), \varepsilon)$, the result for $\alpha =0$ follows. Then, the general result for any $\alpha\in\na^n$ follows, too, by observing that ${\rm Op}_R(\zeta^\alpha )v=(hD_z)^\alpha {\rm Op}_R(1 )v+{\cal O}(e^{-1/2h}\sup_{|y-z|\leq R^{-1/2}} |v(y)|$.
\end{proof}

%%%%%%%%%%%%%%%%%%  BIBLIOGRAPHY  %%%%%%%%%%%%%%%%%%%%%%%%%%%%%%%%
{}

\end{document}